\newcommand{\N}{\mathbb{N}}
\newcommand{\C}{\mathbb{C}}
\newcommand{\R}{\mathbb{R}}
\newcommand{\F}{\mathfrak{F}_s}
\newcommand{\FF}{\mathcal{F}}
\newcommand{\Real}{\mathfrak{Re}}
\newcommand{\HHH}{\mathcal{H}}
\newcommand{\I}{\mathbf{I}}
\newcommand{\ZZ}{\mathcal{Z}}
\newcommand{\LL}{\mathcal{L}}
\newcommand{\E}{\mathcal{E}}
\newcommand{\Proj}{\Pi_V^\perp}
\newcommand{\abs}[1]{\left|#1\right|}
\newcommand{\norm}[1]{\left\|#1\right\|}
\newcommand{\bra}[1]{\left\langle #1 \right|}
\newcommand{\ket}[1]{\left| #1 \right\rangle}
\newcommand{\ps}[2]{\left\langle #1,#2 \right\rangle}
\newcommand{\diff}{\mathop{}\!\mathrm{d}}
\newtheorem{theorem}{Theorem}[section]
\newtheorem{lemma}[theorem]{Lemma}
\newtheorem{proposition}[theorem]{Proposition}
\newtheorem{remark}[theorem]{Remark}
\newtheorem{assumption}{Hypothesis}
\numberwithin{equation}{section}
\author[S. Breteaux]{S{\'e}bastien Breteaux}
\address[S. Breteaux]{Universit{\'e} de Lorraine, CNRS, IECL, F-57000 Metz, France}
\email{sebastien.breteaux@univ-lorraine.fr}
\author[J. Faupin]{J{\'e}r{\'e}my Faupin}
\address[J. Faupin]{Universit{\'e} de Lorraine, CNRS, IECL, F-57000 Metz, France}
\email{jeremy.faupin@univ-lorraine.fr}
\author[J. Payet]{Jimmy Payet}
\address[J. Payet]{Universit{\'e} de Lorraine, CNRS, IECL, F-57000 Metz, France}
\email{jimmy.payet@univ-lorraine.fr}
\theoremstyle{plain}
\theoremstyle{plain}
\theoremstyle{plain}
\theoremstyle{plain}
\theoremstyle{remark}
\providecommand{\lemmaname}{Lemma}
\providecommand{\notationname}{Notation}
\providecommand{\propositionname}{Proposition}
\providecommand{\theoremname}{Theorem}
\begin{document}
 \title[Quasi-Classical Ground States]{Quasi-Classical Ground States. II. \\ Standard Model of Non-Relativistic QED}

\begin{abstract}
We consider a non-relativistic electron bound by an external potential and coupled to the quantized electromagnetic field in the standard model of non-relativistic QED. We compute the energy functional of product states of the form $u\otimes \Psi_f$, where $u$ is a normalized state for the electron and $\Psi_f$ is a coherent state in Fock space for the photon field. The minimization of this functional yields a Maxwell--Schr\"odinger system up to a trivial renormalization. We prove the existence of a ground state under general conditions on the external potential and the coupling. In particular, neither an ultraviolet cutoff nor an infrared cutoff needs to be imposed. Our results provide the convergence in the ultraviolet limit and the second-order asymptotic expansion in the coupling constant of the ground state energy of Maxwell--Schr\"odinger systems.
\end{abstract}
\maketitle
\tableofcontents
\section{Introduction}
We consider in this paper a non-relativistic spin-$\frac{1}{2}$ particle (an electron)  minimally coupled to the quantized radiation field in the standard model of non-relativistic quantum electrodynamics, with an external potential~$V$. This physical system is mathematically described by a Pauli-Fierz Hamiltonian~$\mathbb{H}$, introduced in~\cite{PauliFierz38}, whose spectral and scattering theories have been thoroughly studied since the end of the nineties (see, among others, \cite{BachFrohlichSigal98,BachFrohlichSigal99,DerezinskiGerard99,GriesemerLiebLoss01,Sigal09, Spohn04,HaslerHerbst11, AraiHirokawa97, Gerard00, GriesemerHasler09} and references therein). 
To be well-defined, the Pauli-Fierz Hamiltonian~$\mathbb{H}$ requires an unphysical regularization: the interaction term comes with an ultraviolet cutoff. Finding a renormalization procedure leading to the definition of the model in the ultraviolet limit remains an important open problem.

Restricting the energy functional associated to $\mathbb{H}$ to well-chosen classes of states allows one to study the energy and its infimum more easily. In the translation invariant case ($V=0$), considering the set of general product states $u\otimes \Psi$ where~the state $u$ of the electron is a unit vector in the Hilbert space~$\HHH_\mathrm{el}=L^2(\R^3;\C^2)$ and the state~$\Psi$ of the photon field is a unit vector in Fock space, the ultraviolet divergence of the infimum of the energy functional~$\langle(u\otimes\Psi),\mathbb{H}(u\otimes\Psi)\rangle$ has been studied by Lieb and Loss in \cite{LiebLoss99}, and by Bach and Hach in \cite{BachHach20}. Denoting by $\Lambda$ the ultraviolet parameter associated to the ultraviolet cutoff introduced into the interaction Hamiltonian, it is shown in \cite{LiebLoss99,BachHach20} that the corresponding ground state energy diverges as~$\Lambda^{12/7}$ in the ultraviolet limit. Also in the translation invariant case, at a fixed total momentum, the existence and uniqueness of a minimizer of the energy functional over coherent or quasifree states has been studied in~\cite{BachBreteauxTzaneteas13}.

Product states of the form~$u\otimes \Psi_{\vec{f}}$, with~$\Psi_{\vec{f}}$ a coherent state parametrized by vectors~$\vec{f}$ in the one-particle Hilbert space~$\mathfrak{h}$ for the field,  have been considered in~\cite{CorreggiFalconi18, CorreggiFalconiOlivieri19_01, CorreggiFalconiOlivieri19_02,CorreggiFalconiOlivieri20}. 
The energy functional 
\begin{equation}\label{eq:quasi-cl}
(u,\vec{f})\mapsto\langle (u\otimes\Psi_{\vec{f}}),\mathbb{H}(u\otimes\Psi_{\vec{f}})\rangle
\end{equation}
 is then called the \emph{quasi-classical energy}. Indeed, assuming that the field degrees of freedom are `almost classical', in the sense that the creation and annihilation operators $a^*$, $a$ 
are rescaled as $a^*_\varepsilon=\sqrt{\varepsilon}a^*$, $a_\varepsilon=\sqrt{\varepsilon}a$ (see also~\cite{AmmariNier08}),
it is shown in~\cite{CorreggiFalconi18, CorreggiFalconiOlivieri19_01, CorreggiFalconiOlivieri19_02,CorreggiFalconiOlivieri20}, under suitable assumptions, that the ground state energy of the rescaled Pauli-Fierz Hamiltonian $\mathbb{H}_{\varepsilon}$ converges to the infimum of the quasi-classical energy functional as~$\varepsilon\to0$.

In this paper, we also consider the quasi-classical energy functional \eqref{eq:quasi-cl}. Up to a trivial renormalization, we will see that minimizing \eqref{eq:quasi-cl} boils down to minimizing $\mathcal{E}_V(u,\vec{A}_{\vec{f}})$, for some~$\vec{f}$-dependent magnetic potential~$\vec{A}_{\vec{f}}$, where  $\mathcal{E}_V(u,\vec{A})$  is the Maxwell-Schr\"odinger energy in the Coulomb gauge, given by 
\begin{align}\label{eq:comput_E(u,A_intro}
\mathcal{E}_V(u,\vec{A}) =&
\| \vec{\sigma} \cdot (-i\vec{\nabla} - g\hat\chi * \vec{A}) \, u \|^2_{L^{2}} + \langle u, V u\rangle_{L^2}
+\frac{1}{32\pi^3} \| \vec A \|^2_{\dot{H}^1}\,.
\end{align}
Here $\vec{\sigma}$ is the vector of Pauli matrices, $g$ is a coupling constant and $\chi$ a coupling function. The coefficient $(32\pi^3)^{-1}$ comes from our choice of normalization of the Fourier transform, see below.

For a general class of external potentials~$V$ (including both binding and confining potentials) and coupling functions $\chi$, we prove the existence of a 
ground state for~$\mathcal{E}_V$. In particular, neither an infrared nor an ultraviolet cutoff is needed in the interaction term of the energy. Furthermore, if an ultraviolet cutoff of parameter $\Lambda$ is imposed, our results show that the ground state energy converges in $\mathbb{R}$, as $\Lambda\to\infty$. 

To prove the existence of a quasi-classical ground state, we follow the usual strategy of the calculus of variations. The main difficulty comes from the possible absence of an ultraviolet cutoff. This induces singular terms with a critical behavior in the energy functional that we handle using suitable estimates in Lorentz spaces. Note that Kramer's symmetry of the Maxwell-Schr\"odinger energy functional implies that the minimizer is not unique (even up to a phase in $u$).

In~\cite{FrohlichLiebLoss86}, Fr\"ohlich, Lieb and Loss studied the minimization problem of similar energy functionals. Compared to~\cite{FrohlichLiebLoss86}, our results provide the existence of a ground state for large classes of external potentials and coupling terms, and allow us to pass to the ultraviolet limit. Moreover, we compute the second order asymptotic expansion at small coupling of the ground state energy.

In the companion paper~\cite{BreteauxFaupinPayet22_1}, we study the same problem in the case of a spinless, non-relativistic particle linearly coupled to a scalar, quantized radiation field. Although the overall strategies in~\cite{BreteauxFaupinPayet22_1} and the present paper are similar, the arguments used in the proofs are significantly different. In particular, in the case of linear coupling, an easy argument shows that the minimization of the quasi-classical ground state energy reduces to the minimization of the Hartree energy (over the state $u$ of the non-relativistic particle). In the present context such a simplification does not occur: We have to minimize \eqref{eq:comput_E(u,A_intro} over $(u,\vec{A})$ in suitable spaces, with the constraint $\|u\|_{L^2}=1$ for the electron state and no constraint on the divergence-free vector potential $\vec{A}$ in $\dot{H}^1$. Note however that some technical results concerning the electronic Hamiltonian are used both in~\cite{BreteauxFaupinPayet22_1} and in this paper. They are stated here without proof.

We have focused in this work on the static problem, but the dynamical version, the Maxwell--Schr\"odinger equations, has of course been also largely studied in the literature. In particular, the Maxwell--Schr\"odinger equations have been derived in~\cite{NakamitsuTsutsumi86}, where results on the existence of solutions have been proven. The dynamics of the Maxwell--Schr\"odinger equations has been further studied in~\cite{Tsutsumi93, GuoNakamitsuStrauss95, GinibreVelo06, NakamuraWada07, Wada12, BenciFortunato14, ChupengCao18, LiuWada, Kieffer20, ColinWatanabe20, Shimomura03,BejenaruTataru09}. 
In relation with many-body systems, the Maxwell--Schr\"odinger equations have been obtained from many-body dynamics in~\cite{LeopoldPickl20}, see also~\cite{CorreggiFalconiOlivieri19_02}.

\medskip

\noindent \emph{Notations.}
We recall that for $1\le p<\infty$, the Lorentz spaces (or weak $L^p$ spaces) $L^{p,\infty}(\mathbb{R}^3)$ are defined as the set of (equivalence classes of) measurable functions $f:\mathbb{R}^3\to\mathbb{C}$ such that
\begin{equation}\label{eq:norm_weakLp}
\|f\|_{L^{p,\infty}}:=\sup_{t>0}  \lambda \big( \{ |f|>t\}\big)^{\frac1p}t,
\end{equation}
is finite, where $\lambda$ denotes Lebesgue's measure.

The Fourier transform acting on tempered distribution is denoted by $\mathcal{F}$, its inverse being given by~$(2\pi)^{-3}\bar{\mathcal F}$. (We use the normalization $\mathcal F(f)(x)=\int_{\mathbb{R}^3} e^{-ix\cdot\xi} f(\xi) \diff \xi$ for $f$ in~$L^1(\mathbb{R}^3)$, and hence~$\bar{\mathcal F}(f)(x)=\int_{\mathbb{R}^3} e^{ix\cdot\xi} f(\xi) \diff \xi$. This normalization is not the standard one but it will be convenient in our context.) Throughout the paper, we use the following convention about the convolution product. Let~$f$ and~$g$ be functions associated to tempered distributions. Assume that $\FF(g)$ identifies with a function such that $f\FF(g)$ can be associated to a tempered distribution. We write
\begin{equation}\label{eq:convfourier}
\FF(f)*g := (2\pi)^{-3} \FF( f \bar{\FF}(g) ).
\end{equation}
This convention is convenient in our context. It extends the well-known equality which holds e.g. if~$f$ and~$g$ are in~$L^1$ or $f$ is in~$L^2$ and~$g$ in~$L^1$.

In several places, we use localization functions $\eta$ and  $\tilde{\eta}$ in $\mathrm{C}^\infty(\mathbb{R}^3)$ such that~
$\eta(x)=1$ if~$|x|\le1$, $\eta(x)=0$ if $|x|\ge2$ and 
\begin{equation*}
\eta^2+\tilde{\eta}^2=1\,.
\end{equation*}
For all $R>0$, we set 
\begin{equation}\label{eq:defetaR}
\eta_R(x):=\eta(x/R) \quad\text{and} \quad\tilde{\eta}_R(x):=\tilde{\eta}(x/R)\,.
\end{equation}

If $\mathcal{H}_1$, $\mathcal{H}_2$ are two Hilbert spaces, $\mathcal{L}(\mathcal{H}_1,\mathcal{H}_2)$ stands for the set of bounded linear operators from $\mathcal{H}_1$ to $\mathcal{H}_2$. Given a linear operator $A$ on a Hilbert space $\mathcal{H}$, we denote by $\mathcal{D}(A)$ its domain and $\mathcal{Q}(A)$ its form domain. The topological dual of a Banach space $\mathcal{B}$ is denoted by~$\mathcal{B}^*$.

\subsection{The electronic Hamiltonian}\label{subsec:elec}

If the coupling between the electron and the photon field is turned out, the free Hamiltonian for the electron is of the form
\begin{equation*}
\begin{pmatrix}
H_V & 0 \\
0 & H_V
\end{pmatrix}
\quad\text{on}\quad \mathcal{H}_{\text{el}}:=L^2(\mathbb{R}^3;\mathbb{C}^2)=L^2(\mathbb{R}^3;\mathbb{C})\oplus L^2(\mathbb{R}^3;\mathbb{C}),
\end{equation*}
where
\begin{equation}\label{eq:HV}
H_V := -\Delta+V(x)
\end{equation}
is defined on a domain contained in~$L^2(\mathbb{R}^3;\mathbb{C})$. Here $V:\mathbb{R}^3\to\mathbb{R}$ is the external potential. We display the dependence on $V$ since one of our main hypotheses (see Hypothesis \ref{condVPauliFierz}) assumes the existence of a decomposition $V=V_1+V_2$ such that $V_1\ge0$, $V_2$ vanishes at $\infty$ and there is a gap between the ground state energies of $H_V$ and $H_{V_1}$.

The main examples we have in mind are confining potentials, $V(x)\to\infty$ as $|x|\to\infty$, and Coulomb-type potentials, $V(x)=-c|x|^{-1}$ with $c>0$. We introduce general hypotheses on~$V$ that are fulfilled by a large class of potentials, including the two preceding examples. As we will see below, some of our main results have interesting consequences in special cases, especially when $V$ is confining.

We set
\begin{equation*}
\mu_V:=\inf\sigma(H_{V}), 
\end{equation*}
and likewise if $V$ is replaced by another potential. For $U:\mathbb{R}^3\to\mathbb{R}$, we denote by
\begin{equation*}
U_+:=\max(U,0),\quad U_-:=\max(-U,0),
\end{equation*}
the positive and negative parts of $U$, respectively, so that $U=U_+-U_-$. 

We make the following hypothesis.
\begin{assumption}[Conditions on $V$]\label{condVPauliFierz}
The potential $V$ satisfies $V(x)=V(-x)$ for all $x$ in $\mathbb{R}^3$ and there exist $a\ge0$ and~$b$ in~$\mathbb{R}$ such that 
\begin{equation*}
V_{-}\leq a\sqrt{-\Delta}+b
\end{equation*}
in the sense of quadratic forms on $H^{1/2}(\mathbb{R}^3)$. Moreover, $V$ decomposes as $V=V_{1}+V_{2}$ with
\begin{enumerate}[label=(\roman*)]
\item $V_{1}\in L_{\mathrm{loc}}^{1}(\mathbb{R}^{3};\mathbb{R}^{+})$,
\item $V_{2}\in L_{\mathrm{loc}}^{3/2}(\mathbb{R}^{3};\mathbb{R})$ and ${\displaystyle \lim_{|x|\to\infty}V_{2}(x)=0}$.
\end{enumerate}
\end{assumption}

Since $V_+\ge0$, $H_{V_+}=-\Delta+V_+$ identifies with a non-negative self-adjoint operator on $L^2(\mathbb{R}^3)$ with form domain
\begin{equation*}
\mathcal{Q}(H_{V_+}) = \mathcal{Q}(-\Delta)\cap \mathcal{Q}(V_+)=\Big \{ u \in H^1(\mathbb{R}^3) , \, \int_{\mathbb{R}^3} V_+(x) |u(x) |^2 \diff x < + \infty \Big \}.
\end{equation*} 
Moreover, it follows from Hypothesis \ref{condVPauliFierz} that $H_V$ identifies with a semi-bounded self-adjoint operator with form domain $\mathcal{Q}(H_V) = \mathcal{Q}(H_{V_+}) = \mathcal{Q}(H_{V_1})$. In particular, $\mu_V$ and $\mu_{V_1}$ are well-defined. See Section~\ref{subsec:Estimate-elec} for justifications. 

The state of the electron is represented by a unit vector in the space $L^2(\mathbb R^3;\mathbb{C}^2)$. We set
\begin{equation}\label{eq:QV_intro}
\mathcal{Q}_V:=\mathcal{Q}(H_V)\otimes \mathbb{C}^2\, ,
\end{equation}
and note that $\mathcal{Q}_V$ is a Hilbert space for the norm
\begin{equation*}
\|u\|^2_{\mathcal{Q}_V}:=\|u\|^2_{H^1}+\big\|(V_+)^{\frac12}\otimes  \mathbf{I}_{\mathbb{C}^2}\, u\big\|^2_{L^2} \, .
\end{equation*}
We will most of the time consider an electron state $u$ in
\begin{equation}\label{eq:defU_intro}
\mathcal{U} := \{u\in\mathcal{Q}_V\mid \|u\|_{L^2}=1\} \,.
\end{equation}

Finally, in order to obtain the asymptotic expansion of the infimum of the Maxwell-Schr\"odinger  energy functional with respect to the coupling constant, we will require that  $H_V$ has a unique ground state.  By Perron-Frobenius arguments, it is well-known that, under suitable conditions on $V$, if $\mu_V$ is an eigenvalue of $H_V$ then it is simple and there exists a corresponding strictly positive eigenstate
 (see e.g. \cite[Theorems XIII.46 and XIII.48]{ReedSimonII}). We will make the following related hypothesis.
\begin{assumption}[Ground state of $H_V$]\label{condGS} The ground state energy $\mu_V$ of~$H_V=-\Delta+V$ is a simple isolated eigenvalue associated to a unique positive ground state~$u_V$ belonging to~$L^2(\mathbb{R}^3;\mathbb{R}_+)$ and such that $\|u_V\|_{L^2}=1$.
\end{assumption}

The orthogonal projection onto the vector space spanned by $\left(\begin{smallmatrix}u_V\\0\end{smallmatrix}\right)$ and $\left(\begin{smallmatrix}0\\u_V\end{smallmatrix}\right)$ in $L^2(\mathbb{R}^3;\mathbb{C}^2)$ is denoted by~$\Pi_V$. We also set $\Pi_V^\perp:=\mathbf{I}_{L^2(\mathbb{R}^3;\mathbb{C}^2)}-\Pi_V$.

\subsection{Standard model of non-relativistic QED}\label{subsec:SM}

In the standard model of non-relativistic QED, the quantized electromagnetic field
is represented by a vector-valued bosonic field whose Hilbert
space is given by the symmetric Fock space
\begin{equation*}
\mathcal{H}_{\text{f}}:=\mathfrak{F}_{s}(L^{2}_\perp(\mathbb{R}^{3};\mathbb{C}^{3}))=\bigoplus_{n=0}^{+\infty}\bigvee^{n}L^{2}_\perp (\mathbb{R}^{3};\mathbb{C}^{3})\,,
\end{equation*}
where $L^{2}_\perp(\mathbb{R}^{3};\mathbb{C}^{3}) = \{\vec{f} \in L^2(\mathbb R^3; \mathbb C^3) \mid \forall k \in \mathbb R^3, k\cdot \vec f(k) = 0 \}$.
The free field Hamiltonian in momentum representation is the second
quantization of the multiplication operator by the euclidean norm
of~$k$,
\begin{equation*}
\mathbb{H}_{\text{f }}:=\mathrm{d}\Gamma(|k|)\,.
\end{equation*}

The kinetic energy of the electron minimally coupled to the field is given by the following expression, which is quadratic in the creation and annihilation operators, 
\begin{equation*}
\left(\vec{\sigma} \cdot \big(-i\vec{\nabla}_{x}\otimes \mathbf{I}_{\mathbb{C}^2} \otimes\mathbf{I}_{\text{f}}-\vec{\mathbb{A}}(\vec{m}_{x})\big)\right)^{2},
\end{equation*}
where  
\[
\vec{\mathbb{A}}(\vec{m}_{x}):=(a(\vec{m}_{x,j})+a^{*}(\vec{m}_{x,j}))_{1\leq j\leq 3}
\]
has three components, corresponding to the three components for $1\leq j\leq 3$ of the coupling functions
\[
\vec{m}_{x,j}(k,\tau):=g\frac{\chi(k)}{|k|^{1/2}}e^{-ik\cdot x}\vec{\varepsilon}_{\tau,j}(k)\,,
\]
and the Pauli matrices are
\[
\sigma_{1}=\begin{pmatrix}0 & 1\\
1 & 0
\end{pmatrix}\,,\;\sigma_{2}=\begin{pmatrix}0 & -i\\
i & 0
\end{pmatrix}\,,\;\sigma_{3}=\begin{pmatrix}1 & 0\\
0 & -1
\end{pmatrix}\,.
\]
The coupling functions are defined using a family $(\vec{\varepsilon}_{\tau}(k))_{\tau \in \{1,2,3\}}$
of polarization vectors, i.e. orthonormal bases of $\mathbb{R}^{3}$ depending on~$k$ in~$\mathbb{R}^{3}\setminus\{\vec{0}\}$ and
such that $\vec{\varepsilon}_{3}(k)={k}/{|k|}$, a coupling constant~$g$ in~$\mathbb{R}$
and an ultraviolet cutoff function $\chi$ such that~$\chi/|k|^{1/2}$ and~$\chi/|k|$ are both in~$L^2(\mathbb{R}^3)$. Note, though, that these conditions on~$\chi$ will be relaxed to some extent in our study of the Maxwell--Schr\"odinger functional.
 
The Pauli-Fierz Hamiltonian of the standard model of non-relativistic QED is given by
\begin{align}
\mathbb{H}
:= & \left(\vec{\sigma} \cdot \big(-i\vec{\nabla}_{x}\otimes \mathbf{I}_{\mathbb{C}^2} \otimes\mathbf{I}_{\text{f}}-\vec{\mathbb{A}}(\vec{m}_{x})\big)\right)^{2}
+V \otimes \mathbf{I}_{\mathbb{C}^2} \otimes\mathbf{I}_{\text{f}}+\mathbf{I}_{\text{el}}\otimes \mathbb{H}_{\text{f}} \notag \\
= & \big(-i\vec{\nabla}_{x}\otimes \mathbf{I}_{\mathbb{C}^2} \otimes\mathbf{I}_{\text{f}}-\vec{\mathbb{A}}(\vec{m}_{x})\big)^{2}
-\vec{\sigma} \cdot \sqrt{2}\Phi(\vec{\nabla}_x \wedge \vec{m}_x)
+V \otimes \mathbf{I}_{\mathbb{C}^2} \otimes\mathbf{I}_{\text{f}}+\mathbf{I}_{\text{el}}\otimes \mathbb{H}_{\text{f}} \, , \label{eq:defH}
\end{align}
where the normalization of the field operator is given in the Appendix, see~\eqref{eqn:def-field-operator}. 
The operator $\mathbb{H}$ on $\mathcal{H}_{\text{el}}\otimes\mathcal{H}_{\text{f}}=L^2(\mathbb{R}^3;\mathbb{C})\otimes\mathbb{C}^2\otimes\mathcal{H}_{\text{f}}$ identifies with a self-adjoint operator with form domain 
\begin{equation}\label{eq:def-Hfree-PF}
\mathcal{Q}(\mathbb{H}) := \mathcal{Q}(\mathbb{H}_{\mathrm{free}}), \quad \mathbb{H}_{\mathrm{free}}:=H_V\otimes \mathbf{I}_{\mathbb{C}^2} \otimes\mathbf{I}_{\text{f}}+\mathbf{I}_{\text{el}}\otimes \mathbb{H}_{\text{f}} \,,
\end{equation}
see Appendix \ref{app:Fock}, where $H_V$ is defined in \eqref{eq:HV}. Under suitable assumptions on $V$ and $\chi$, one can actually check that $\mathcal{D}(\mathbb{H}) := \mathcal{D}(\mathbb{H}_{\mathrm{free}})$, see \cite{Hiroshima02,HaslerHerbst08}.

\subsection{The Maxwell--Schr\"odinger energy functional}\label{subsec:MS}

We take $u$ in~$\mathcal{U}$ 
and consider a coherent state
\[
\Psi_{\vec{f}}=e^{i\Phi(\frac{\sqrt{2}}{i}\vec{f})}\Omega
\in\mathcal{H}_{\text{f}}
\]
with parameter $\vec{f}$ in $L^{2}_\perp(\mathbb{R}^{3};\mathbb{C}^{3})\cap\ZZ$. Here
\begin{equation}\label{eq:defZ_PF}
\ZZ :=\left\{\vec{f}(k)=\sum_{1\leq\tau\leq 2} f_\tau(k)\vec{\varepsilon}_\tau(k)
 \mid \;k\mapsto\abs{k}^{1/2}\vec{f}(k)\in L^2(\R^3,\diff k)\right\}\,.
\end{equation}
 A direct
computation (see Section \ref{sec:MaxwellSchroedinger}) yields the following  formula for the energy of the product state $u\otimes \Psi_{\vec{f}}$ assuming that $\chi(-k) = \overline{\chi(k)}$:
\begin{equation}\label{eq:defE_PF}
\big\langle (u\otimes \Psi_{\vec{f}}),\mathbb{H} (u\otimes \Psi_{\vec{f}})\big\rangle_{\mathcal{H}}
 = 2g^2 \| |k|^{-1/2} \chi(k) \|^2_{L^{2}}
 + \big\langle \vec{f}_- , |k|\vec{f}_-\big\rangle_{L^2}
 +\mathcal{E}_V(u,\vec{A}_{\vec{f}}),
\end{equation}
where $\mathcal{E}_V$ is defined by \eqref{eq:comput_E(u,A_intro},
\begin{equation}\label{eq:def_Af}
\vec{A}_{\vec{f}}:=2 \mathcal{F}(\overline{\vec{f}_+(k)} |k|^{-1/2}) \,,%
\end{equation}
and we have set $\vec{f}_+(k) := \frac12( \vec{f}(k)+\overline{\vec{f}(-k)})$, $\vec{f}_-(k) := \frac12( \vec{f}(k)-\overline{\vec{f}(-k)})$. Note that
\begin{align}\label{eq:comput_E(u,A)}
\mathcal{E}_V(u,\vec{A}) 
=&
\| (-i\vec{\nabla} - g\hat\chi * \vec{A}) \, u \|^2_{L^{2}} + \langle u, (V - g\hat{\chi}*\vec{\sigma} \cdot \vec{B}) u\rangle_{L^2}
+\frac{1}{32\pi^3}\| \vec A \|^2_{\dot{H}^1},
\end{align}
with $\vec{B}=\vec{\nabla}\wedge \vec{A}$\,.
We thus obtain the stationary Maxwell--Schr\"odinger energy functional in the Coulomb gauge introduced in \cite{NakamitsuTsutsumi86}, which we refer to as the Maxwell--Schr\"odinger energy functional.

If $V(x)=V(-x)$, this energy functional is invariant under Kramer's symmetry,
\begin{equation}\label{eq:Kramers}
\mathcal{E}_V(\nu u,\vec{A}(-\cdot)) =\mathcal{E}_V(u,\vec{A}),
\end{equation}
 where~$\nu u \, (x) = \sigma_2 \overline{u(-x)}$, see~\cite{LossMyaoSpohn09}.
Hence, in general, we can only hope for uniqueness of the minimizer modulo this symmetry.

As we will see in Section \ref{sec:Pauli-Fierz}, the Maxwell--Schr\"odinger energy functional is well-defined when~$(u,\vec A)$ belongs to $\mathcal{U} \times \mathcal{A}$, where
\begin{align}
\mathcal{A} := \{ \vec A\in \dot{H}^1(\mathbb{R}^3;\mathbb{R}^3) \mid  \vec \nabla \cdot \vec A = 0\}  \label{eq:def_space_A}
\end{align}
and $\chi$ satisfies the following assumption:
\begin{assumption}[Conditions on $\chi$]\label{condChi}
The cutoff function $\chi:\mathbb{R}^3\to\mathbb{R}$ satisfies $\chi(-k) = \chi(k)$ for all~$k$ in~$\mathbb{R}^3$ and 
\begin{equation*}
\frac{\chi}{|k|}\in L^2(\mathbb{R}^3)+L^{3,\infty}(\mathbb{R}^3)\,.
\end{equation*}
\end{assumption}
\begin{remark}
At the expense of slightly more involved expressions in some places, our main results below hold under the more general assumption that $\chi$ is complex-valued and satisfies $\chi(-k)=\overline{\chi(k)}$ for all $k$ in $\mathbb{R}^3$.
\end{remark}
The main quantity studied in this paper is
\[
E_{U}:=\inf_{\mathcal{U\times A}}\mathcal{E}_{U} \,,
\]
with~$U$ a potential satisfying Hypothesis~\ref{condVPauliFierz}.

\subsection{Main results}\label{subsec:main}

We begin with the following proposition which relates minimizers of the Maxwell--Schr\"odinger energy functional to minimizers of the energy of product states $u\otimes\Psi_{\vec{f}}$ in the standard model of non-relativistic QED.
\begin{proposition}\label{prop:equiv_PF}
Suppose that $V$ satisfies Hypothesis~\ref{condVPauliFierz} and that $\chi$ satisfies Hypothesis~\ref{condChi}. If~$(u_{\mathrm{gs}}, \vec{A}_{\mathrm{gs}})$ is a global minimizer of~$\mathcal{E}_V$ over~$\mathcal{U\times A}$, then there exists $\vec{f}_{\mathrm{gs}}$ in~$L^{2}_\perp(\mathbb{R}^{3};\mathbb{C}^{3})\cap\ZZ$ such that~$\vec{A}_{\mathrm{gs}}=\vec{A}_{\vec{f}_\mathrm{gs}}$ in the sense of \eqref{eq:def_Af}.
\end{proposition}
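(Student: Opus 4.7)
My plan is to invert~\eqref{eq:def_Af} explicitly at the given minimizer $\vec A_{\mathrm{gs}}$. Choosing $\vec f_-=0$ and formally applying Fourier inversion suggests the candidate
\[
\vec f_{\mathrm{gs}}(k) := \frac{|k|^{1/2}}{2(2\pi)^3}\mathcal F(\vec A_{\mathrm{gs}})(k),
\]
and the task then reduces to verifying that $\vec f_{\mathrm{gs}}$ belongs to $L^2_\perp\cap\mathcal Z$ and satisfies $\vec A_{\vec f_{\mathrm{gs}}}=\vec A_{\mathrm{gs}}$.

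The algebraic properties are direct. Since $\vec A_{\mathrm{gs}}$ is real, $\mathcal F(\vec A_{\mathrm{gs}})(-k)=\overline{\mathcal F(\vec A_{\mathrm{gs}})(k)}$ yields $\vec f_{\mathrm{gs}}(-k)=\overline{\vec f_{\mathrm{gs}}(k)}$, whence $\vec f_{\mathrm{gs},+}=\vec f_{\mathrm{gs}}$ and $\vec f_{\mathrm{gs},-}=0$. The Coulomb gauge condition $\vec\nabla\cdot\vec A_{\mathrm{gs}}=0$ gives $k\cdot\mathcal F(\vec A_{\mathrm{gs}})(k)=0$, whence the polarization expansion of $\vec f_{\mathrm{gs}}$ onto $\vec\varepsilon_1(k),\vec\varepsilon_2(k)$. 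Applying $\mathcal F$ to both sides of~\eqref{eq:def_Af} and using $\mathcal F^2 g(x)=(2\pi)^3 g(-x)$ produces the identity $\vec A_{\vec f_{\mathrm{gs}}}=\vec A_{\mathrm{gs}}$. Finally, $|k|^{1/2}\vec f_{\mathrm{gs}}=|k|\mathcal F(\vec A_{\mathrm{gs}})/(2(2\pi)^3)\in L^2$ by $\vec A_{\mathrm{gs}}\in\dot H^1$, so that $\vec f_{\mathrm{gs}}\in\mathcal Z$.

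The delicate step is $\vec f_{\mathrm{gs}}\in L^2(\mathbb R^3;\mathbb C^3)$, which is equivalent to $\vec A_{\mathrm{gs}}\in\dot H^{1/2}(\mathbb R^3)$. This extra regularity does not come from membership in $\mathcal A$ alone (since $\dot H^1\not\subset\dot H^{1/2}$ at low frequency) and must be drawn from the minimality of $(u_{\mathrm{gs}},\vec A_{\mathrm{gs}})$. I would therefore derive the Euler-Lagrange equation for $\vec A$ by varying $\mathcal E_V(u_{\mathrm{gs}},\cdot)$ along admissible perturbations $\delta\vec A\in\mathcal A$, which produces in Fourier space
\[
\mathcal F(\vec A_{\mathrm{gs}})(k) = \frac{16\pi^3\chi(k)}{|k|^2}\mathcal P(k)\bigl[2g\mathcal F(\vec J_{\mathrm{S}})(k) + g\,ik\wedge\mathcal F(\overline{u_{\mathrm{gs}}}\vec\sigma u_{\mathrm{gs}})(k)\bigr],
\]
with $\mathcal P(k)=I-kk^T/|k|^2$ the Leray projector and $\vec J_{\mathrm{S}}:=\operatorname{Re}\bigl(\overline{u_{\mathrm{gs}}}(-i\vec\nabla-g\hat\chi\ast\vec A_{\mathrm{gs}})u_{\mathrm{gs}}\bigr)$ the physical Pauli current. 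Crucially, $\vec J_{\mathrm{S}}$ is divergence-free by the continuity equation for the Pauli equation satisfied by $u_{\mathrm{gs}}$, and it belongs to $L^1$ by the Sobolev regularity of $u_{\mathrm{gs}}$; by continuity of its Fourier transform together with $k\cdot\mathcal F(\vec J_{\mathrm{S}})(k)=0$, this forces $\mathcal F(\vec J_{\mathrm{S}})(0)=0$, while the spin term vanishes at $k=0$ by its curl structure. The bracket on the right-hand side is therefore $o(1)$ near the origin, and a careful low-frequency analysis then yields $|k|^{1/2}\mathcal F(\vec A_{\mathrm{gs}})\in L^2$ on a ball around $0$; the tail is handled by Hypothesis~\ref{condChi} on $\chi$ combined with $u_{\mathrm{gs}}\in H^1$. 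The main obstacle I anticipate is precisely carrying out this low-frequency analysis rigorously in the non-cutoff setting and justifying the Euler-Lagrange equation with sufficient quantitative control on the currents.
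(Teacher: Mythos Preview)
Your overall strategy coincides with the paper's: invert \eqref{eq:def_Af} explicitly, reduce the question to $\vec A_{\mathrm{gs}}\in\dot H^{1/2}$, and extract the missing low-frequency regularity from the Euler--Lagrange equation for $\vec A$. The algebraic verifications (transversality, the $\mathcal Z$-condition, $\vec A_{\vec f_{\mathrm{gs}}}=\vec A_{\mathrm{gs}}$) are fine, and the high-frequency part is indeed harmless since $|k|^{1/2}\le |k|$ on $\{|k|\ge 1\}$.

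One point where your argument is actually neater than the paper's: your proof that $\mathcal F(\vec J_{\mathrm S})(0)=0$ via the continuity equation is correct. From $k\cdot\mathcal F(\vec J_{\mathrm S})(k)=0$ one gets $\vec e\cdot \mathcal F(\vec J_{\mathrm S})(t\vec e)=0$ for every unit vector $\vec e$ and $t>0$, and continuity (which follows from $\vec J_{\mathrm S}\in L^1$) then forces $\mathcal F(\vec J_{\mathrm S})(0)=0$. The paper instead proves this by a virial identity $[H_{V,\vec A_{\mathrm{gs}}},x]=-2i(-i\vec\nabla-g\hat\chi*\vec A_{\mathrm{gs}})$, which already requires $u_{\mathrm{gs}}\in\mathcal D(x)$ and hence some spatial decay.

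There is, however, a genuine gap in your low-frequency step. Merely knowing that the bracket is $o(1)$ near $k=0$ is not enough: you need $\chi(k)|k|^{-3/2}[\text{bracket}]\in L^2$ near the origin, and in three dimensions $\int_{|k|<1}|k|^{-3}|F(k)|^2\,\mathrm dk$ diverges unless $|F(k)|=O(|k|^\alpha)$ for some $\alpha>0$. Continuity of $\mathcal F(\vec J_{\mathrm S})$ together with its vanishing at the origin gives no rate. The paper closes this gap by first establishing exponential decay of $u_{\mathrm{gs}}$ (Lemma~\ref{lm:exp_decay}, using that $u_{\mathrm{gs}}$ is a ground state of the Pauli operator $H_{V,\vec A_{\mathrm{gs}}}$). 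This yields $x^\alpha\vec J_{\mathrm S}\in L^1$ for all multi-indices $\alpha$, hence $\mathcal F(\vec J_{\mathrm S})\in W^{\infty,\infty}$, and the mean-value theorem then upgrades $o(1)$ to $O(|k|)$. Without invoking some spatial decay of $u_{\mathrm{gs}}$ beyond $H^1$ your ``careful low-frequency analysis'' cannot be completed, and this is precisely the missing idea in your proposal.
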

This result shows that, up to the trivial renormalization consisting in removing the $\chi$-dependent constant obtained from normal-ordering the Hamiltonian $\mathbb{H}$, the minimizers of the energy of product states $u\otimes \Psi_{\vec{f}}$ in the standard model of non-relativistic QED can be computed \emph{via} the Maxwell--Schr\"odinger energy functional. More precisely,
\begin{equation}\label{eq:defE_PF2}
\min_{(u,\vec{f})\in\mathcal{U}\times (L^2_\perp\cap\ZZ)}  \big\langle (u\otimes \Psi_{\vec{f}}),\mathbb{H} (u\otimes \Psi_{\vec{f}})\big\rangle - 2g^2 \Big\| \frac{\chi(k)}{\sqrt{|k|}} \Big\|^2_{L^{2}} = \min_{(u,\vec{A})\in\mathcal{U\times A}}\mathcal{E}_{V}(u,\vec{A}),
\end{equation}
the minimizers in both sides of the equality (if they exist) being related as in \eqref{eq:def_Af}.

Our main result concerning the existence of a minimizer for $\mathcal{E}_V$ is the following.
\begin{theorem}[Existence of a ground state for Maxwell--Schr\"odinger]\label{thm:Pauli-Fierz}
Suppose that $V$ satisfies Hypothesis~\ref{condVPauliFierz} and that $\chi=\chi_1+\chi_2$ satisfies Hypothesis~\ref{condChi} with~$\chi_1/|k|$ in~$L^2$ and $\chi_2/|k|$ in~$L^{3,\infty}$. Suppose that the decomposition $V=V_1+V_2$ of Hypothesis~\ref{condVPauliFierz} can be chosen such that~$E_{V_1}>E_V$.
With the constant~$a\ge0$ from Hypothesis~\ref{condVPauliFierz} and some universal constant~$C>0$ (see Lemma~\ref{lem:estimateChiAU}), if
\begin{equation}\label{eq:smallness_condition}
32\pi^3 a C^2 g^2 \Big\|\frac{\chi_2}{|k|}\Big\|_{L^{3,\infty}}^2<1 \,,
\end{equation}
then the Maxwell--Schr\"odinger energy functional~$\mathcal{E}_V$
admits a minimizer $(u_\mathrm{gs},\vec{A}_\mathrm{gs})$ in~$\mathcal{U\times A}$. 
\end{theorem}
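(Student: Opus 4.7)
The plan is to apply the direct method of the calculus of variations. Take a minimizing sequence $(u_n, \vec{A}_n)_{n \in \mathbb{N}} \subset \mathcal{U} \times \mathcal{A}$ with $\mathcal{E}_V(u_n, \vec{A}_n) \to E_V$. The first task is to secure coercivity. Expanding the kinetic square in \eqref{eq:comput_E(u,A)} produces cross terms of the form $2g\,\mathrm{Re}\langle -i\vec\nabla u, (\hat\chi * \vec A)u\rangle$, a quadratic term $g^2 \|(\hat\chi * \vec A)u\|_{L^2}^2$, and the magnetic Pauli piece $-g\langle u, \hat\chi * \vec\sigma\cdot\vec B \, u\rangle$. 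Splitting $\chi = \chi_1 + \chi_2$, the $\chi_1$-contributions are handled by standard $L^2$ bounds since $\hat\chi_1\in L^2$; the singular $\chi_2$-contributions are controlled using the Lorentz-space inequality behind Lemma~\ref{lem:estimateChiAU}, which bounds $\|(\hat\chi_2 * \vec A)u\|_{L^2}$ by $C\|\chi_2/|k|\|_{L^{3,\infty}} \|\vec A\|_{\dot H^1}\|u\|_{L^6}$ via Hölder in Lorentz spaces and the Sobolev embedding $\dot H^1 \hookrightarrow L^6$. Combined with the $V_-$ bound in Hypothesis \ref{condVPauliFierz} and the smallness condition \eqref{eq:smallness_condition}, the dangerous $\|\vec A\|_{\dot H^1}^2$-type contributions are absorbed into the field term $(32\pi^3)^{-1}\|\vec A\|_{\dot H^1}^2$ and into $\|\vec\nabla u\|_{L^2}^2$, yielding uniform bounds $\|u_n\|_{\mathcal{Q}_V} + \|\vec A_n\|_{\dot H^1} \le M$ and a lower bound on $\mathcal{E}_V$.

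Passing to a subsequence, extract $u_n \rightharpoonup u_{\mathrm{gs}}$ weakly in $\mathcal{Q}_V$ and $\vec A_n \rightharpoonup \vec A_{\mathrm{gs}}$ weakly in $\dot H^1$; the divergence-free constraint is preserved as $\mathcal{A}$ is weakly closed. By Rellich--Kondrachov, $u_n \to u_{\mathrm{gs}}$ and $\vec A_n \to \vec A_{\mathrm{gs}}$ strongly in $L^p_{\mathrm{loc}}$ for the relevant exponents. The central step, where the strict gap $E_{V_1} > E_V$ is used, is to rule out escape of mass. Using the IMS-type localization with $\eta_R$ and $\tilde\eta_R$ from \eqref{eq:defetaR}, decompose
\begin{equation*}
\mathcal{E}_V(u_n, \vec A_n) \ge \mathcal{E}_V(\eta_R u_n, \vec A_n) + \mathcal{E}_{V_1}(\tilde\eta_R u_n, \vec A_n) - \rho_n(R),
\end{equation*}
where $\rho_n(R)$ gathers the localization error from $|\vec\nabla\eta_R|^2 + |\vec\nabla\tilde\eta_R|^2$, the contribution of $V_2 \tilde\eta_R^2 |u_n|^2$, and the cross terms between the interaction and the localization. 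Thanks to $V_2(x)\to 0$ at infinity, $|\vec\nabla\eta_R| = O(R^{-1})$ and the uniform bounds above, $\limsup_n \rho_n(R) \to 0$ as $R \to \infty$. Setting $\theta_n(R) := \|\tilde\eta_R u_n\|_{L^2}^2$ and using the variational bounds $\mathcal{E}_V(\eta_R u_n, \vec A_n) \ge E_V \|\eta_R u_n\|_{L^2}^2$ and $\mathcal{E}_{V_1}(\tilde\eta_R u_n, \vec A_n) \ge E_{V_1}\, \theta_n(R)$ (after renormalizing by the $L^2$ norms), letting $n\to\infty$ then $R\to\infty$ yields $E_V \ge (1-\theta) E_V + \theta E_{V_1}$ for some $\theta \in [0,1]$. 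Since $E_{V_1} > E_V$ this forces $\theta = 0$, so $u_n \to u_{\mathrm{gs}}$ strongly in $L^2$ and $\|u_{\mathrm{gs}}\|_{L^2} = 1$, giving $u_{\mathrm{gs}} \in \mathcal{U}$.

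It remains to establish weak lower semicontinuity: $\|\vec\nabla u\|_{L^2}^2$, $\int V_+ |u|^2\, \mathrm{d}x$ and $\|\vec A\|_{\dot H^1}^2$ are weakly lower semicontinuous, while strong $L^p_{\mathrm{loc}}$ convergence (combined once more with the Lorentz-space estimates of Lemma \ref{lem:estimateChiAU} to bound remainders uniformly) allows passage to the limit in the interaction, Pauli magnetic and $V_2$ terms. One concludes $\mathcal{E}_V(u_{\mathrm{gs}}, \vec A_{\mathrm{gs}}) \le \liminf_n \mathcal{E}_V(u_n, \vec A_n) = E_V$, so $(u_{\mathrm{gs}}, \vec A_{\mathrm{gs}})$ is a minimizer. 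The main obstacle I expect is the precise treatment of the remainder $\rho_n(R)$ and of the limits in the interaction terms: commuting $\eta_R$ through the nonlocal operator $\hat\chi * \vec A$ produces cross terms that cannot be handled by pure $L^2$ methods when no ultraviolet cutoff is imposed, and must be shown to vanish uniformly in $n$ via Lorentz-space estimates exploiting Hypothesis \ref{condChi}; the smallness condition \eqref{eq:smallness_condition} is exactly what keeps these error estimates strictly subdominant throughout.
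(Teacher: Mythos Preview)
Your proposal follows essentially the same route as the paper: coercivity via the Lorentz-space estimate (Lemma~\ref{lem:estimateChiAU}), extraction of a weak limit, IMS localization combined with the gap $E_{V_1}>E_V$ to recover strong $L^2$ convergence of $u_n$, then lower semicontinuity term by term.

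Two points deserve correction. First, the ``main obstacle'' you flag is not one: $\hat\chi * \vec A$ is a \emph{multiplication} operator (a function of $x$), so it commutes with $\eta_R$, and the magnetic IMS formula (Lemma~\ref{lem:MagneticIMS}) applies cleanly with vector potential $g\hat\chi*\vec A$. No nonlocal commutator errors arise here; the remainder $\rho_n(R)$ consists only of $|\vec\nabla\eta_R|^2+|\vec\nabla\tilde\eta_R|^2$ and $\tilde\eta_R^2 V_2$. Second, your Lorentz estimate is stated with $\|u\|_{L^6}$, whereas the paper's version (Lemma~\ref{lem:estimateChiAU}) uses $\|u\|_{\dot H^{1/2}}$; this intermediate exponent is what makes the coercivity argument close neatly and is why the constant $a$ from the $V_-$ bound appears in \eqref{eq:smallness_condition}.

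The genuine subtlety you gloss over is the lower semicontinuity of the quadratic interaction term $\langle u,(g\hat\chi*\vec A)^2 u\rangle$: one must first replace $u_n$ by $u_\infty$ (via strong $L^2$ convergence and Lemma~\ref{lem:estimateChiAU}), and then exploit weak $\dot H^1$ convergence of $\vec A_n$ by showing $\hat\chi*(u_\infty\varphi)\in\dot H^{-1}$ for suitable $\varphi$ (Lemma~\ref{lem:estimateChikFu1u2}), followed by a positivity argument. Mere $L^p_{\mathrm{loc}}$ convergence of $\vec A_n$ is not enough here.
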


\begin{remark}
For $|g|\, \|{\chi_2}/{|k|}\|_{L^{3,\infty}}$ sufficiently small, the existence of a ground state holds without assuming the presence of an ultraviolet cutoff. The case $\chi=1$ is indeed covered by the previous theorem, since~$1/|k|$ belongs to~$L^{3,\infty}$.
\end{remark}

\begin{remark}
The smallness condition \eqref{eq:smallness_condition} only concerns the critical part $\chi_2$ such that~$\chi_2/|k|$ belongs to $L^{3,\infty}$. We do not require any restriction on $\|\chi_1/|k|\|_{L^2}$.
\end{remark}

\begin{remark}\label{rk:cond_gap}The condition $E_{V_1}>E_V$ is verified in many cases of interest:
\begin{itemize}
\item For potentials $V$ such that~$\mu_V<0$ (e.g. if $V$ is a negative Coulomb potential), one has $E_V\leq \mu_V < 0\leq E_{V_1}$. 
\item For confining potentials (i.e. such that $V(x)\to \infty$ as $|x|\to \infty$), Lemma~\ref{lm:confining_intro} and Proposition~\ref{lem:delta-E-geq-delta-e} imply that there always exists a decomposition $V=V_1+V_2$ such that~$E_V<E_{V_1}$.
\item Assuming the `binding' condition $\mu_{V_1}>\mu_V$ and that $|g| \big\|\frac{\chi}{|k|}\big\|_{L^2+L^{\infty,3}}\le C_V$ with $C_V$ small enough, Proposition~\ref{lem:delta-E-geq-delta-e} implies that~$E_{V_1}>E_V$.
\end{itemize}
\end{remark}

\begin{remark}
If one considers a spinless particle instead of an electron, then the previous theorem becomes trivial. Indeed, using the diamagnetic inequality, it is not difficult to verify that the Maxwell--Schr\"odinger energy of a spinless particle reaches its minimum when~$\vec{A}=0$. On the contrary, Proposition \ref{prop:Asymtotic-Expansion-Ground-State-Energy} below shows that the minimizer of the Maxwell--Schr\"odinger energy for a spin-$\frac12$ electron is not trivial in general.
\end{remark}

\begin{remark}
If $(u,\vec{A})$ is a minimizer of the Maxwell-Schr\"odinger energy functional~$\mathcal{E}_V$, then, by Kramer's symmetry \eqref{eq:Kramers}, $(\nu u,\vec{A}(-\cdot))$ is another minimizer of~$\mathcal{E}_V$, different from the first one since $\nu u\perp u$.  We conjecture that, for $g>0$ sufficiently small, there are exactly two minimizers for~$\mathcal{E}_V$, up to the phase symmetry with respect to~$u$.
\end{remark}

To prove Theorem \ref{thm:Pauli-Fierz} we apply the usual strategy from the calculus of variations \cite{LionsI84,LionsII84}, considering a minimizing sequence $(u_j,\vec{A}_j)$ in $\mathcal{U}\times\mathcal{A}$ and proving that it converges, along some subsequence, to a minimizer of $\mathcal{E}_V$. A difficulty here comes from the fact that the minimization problem is subject to a constraint on the parameter $u$, but not on $\vec{A}$. We first establish a suitable coercivity property that allows us to localize possible minimizers to a ball in $\mathcal{U}\times\mathcal{A}$. This implies that $(u_j,\vec{A}_j)$ converges weakly to some $(u_\infty,\vec{A}_\infty)$ in $\mathcal{U}\times\mathcal{A}$. Then we can the relative compactness in $L^2$ of a ball in $H^1$ to deduce that $(u_j)$ converges strongly in~$L^2$ to $u_\infty$. This in turn suffices to prove the existence of a minimizer.

The main difficulty to implement this approach comes from the presence of singular terms in the interaction (i.e.~terms involving~$\chi_2$ with~$\chi_2/|k|$ in~$L^{3,\infty}$). In order to handle them, we use suitable estimates in Lorentz spaces that we detail in the next section. This is one of the main novelties of this paper, which allows us to remove the ultraviolet cutoff, and which we believe is naturally suited to study the minimization problem in the present context.

Our next proposition establishes the asymptotic expansion of the ground state energy $E_V$ up to third order in the coupling constant, assuming that $V$ and $\chi$ are radial. 
\begin{proposition}[Asymptotic expansion of the ground state energy at small coupling]\label{prop:Asymtotic-Expansion-Ground-State-Energy}
Suppose that $V$ satisfies Hypothesis~\ref{condVPauliFierz} and~\ref{condGS}, and $\chi$ satisfies Hypothesis~\ref{condChi}. 
Suppose also that $V$ and $\chi$ are radial, and that the decomposition $V=V_1+V_2$ of Hypothesis~\ref{condVPauliFierz} can be chosen such that $E_{V_1}>E_V$.
There exist $\varepsilon_V>0$ and $C_V>0$ such that, if
\begin{equation*}
\mathbf{g}_\chi:=|g|\Big\|\frac{\chi}{|k|}\Big\|_{L^2+L^{3,\infty}}\le\varepsilon_V \,,
\end{equation*}
then the minimum of the energy satisfies
\begin{align}\label{expansion-pauli-fierz}
\left| E_V - \mu_V - \frac{32}{3}\pi^3 \int (g\hat\chi * u_V^2)^2  \right| \leq C_V \, \mathbf{g}_\chi^4 \,.
\end{align}
In particular, if $\chi = 1$, then
\begin{align*}
\left| E_V - \mu_V - g^2\frac43 (8\pi^3)^3  \int u_V^4 \right| \leq C_V \, \mathbf{g}_\chi^4 \,.
\end{align*}
\end{proposition}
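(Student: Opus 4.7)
The plan is to establish matching upper and lower bounds $E_V = \mu_V + \Delta E_2(g) + O(\mathbf{g}_\chi^4)$, where $\Delta E_2(g) := \tfrac{32}{3}\pi^3\int(g\hat\chi*u_V^2)^2\,dx$ is the announced second-order correction. For the upper bound I test with $(u_V\otimes\xi_0,\vec A_\star)$, where $\xi_0$ is any unit vector in $\mathbb{C}^2$ and $\vec A_\star\in\mathcal{A}$ solves the Euler--Lagrange equation for the $\vec A$-dependent part of $\mathcal{E}_V(u_V\otimes\xi_0,\cdot)$. Two simplifications reduce the energy. First, the paramagnetic cross term $-2g\,\mathrm{Re}\langle -i\vec{\nabla}(u_V\otimes\xi_0),(\hat\chi*\vec A)(u_V\otimes\xi_0)\rangle$ vanishes: since $u_V$ is real, integration by parts reduces it to a multiple of $\int u_V^2\,\vec{\nabla}\cdot(\hat\chi*\vec A)\,dx$, which is zero by the Coulomb gauge. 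Second, the diamagnetic contribution $g^2\|(\hat\chi*\vec A_\star)u_V\|^2$ is $O(g^4)$ because $\vec A_\star\sim g$. The remaining quadratic-plus-linear functional in $\vec A$, with linear part $-g\int u_V^2\,\vec m_0\cdot(\hat\chi*\vec B)\,dx$ for $\vec m_0 := \langle\xi_0,\vec\sigma\xi_0\rangle\in\mathbb{R}^3$, is minimized explicitly in Fourier. The radial symmetry of $\chi$ and $u_V$ averages the angular factor $|k\wedge\vec m_0|^2/|k|^2 = \sin^2\theta$ over the sphere to $\tfrac{2}{3}$, and Plancherel identifies the resulting $\int\chi(k)^2|\mathcal{F}(u_V^2)(k)|^2\,dk$ with a constant multiple of $\|\hat\chi*u_V^2\|^2_{L^2}$, yielding $E_V - \mu_V \le \Delta E_2(g) + O(\mathbf{g}_\chi^4)$.

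For the lower bound, I decompose an arbitrary $(u,\vec A)\in\mathcal{U}\times\mathcal{A}$ as $u = c\,u_V\otimes\xi + u_\perp$ with $\xi$ a unit vector in $\mathbb{C}^2$, $u_\perp\in\mathrm{Range}\,\Pi_V^\perp$, and $|c|^2+\|u_\perp\|^2 = 1$. Hypothesis~\ref{condGS} provides a gap $\delta > 0$ of $H_V$ above $\mu_V$, so that $\langle u,(H_V\otimes\mathbf{I}_{\mathbb{C}^2})u\rangle \ge \mu_V + \delta\|u_\perp\|^2$. Splitting the paramagnetic, spin-magnetic and diamagnetic terms in $\mathcal{E}_V(u,\vec A)$ according to $\Pi_V\oplus\Pi_V^\perp$, the ``diagonal'' piece depending only on $c(u_V\otimes\xi)$ reproduces after minimization over $\vec A$ a lower bound matching $\Delta E_2(g)$ up to an $O(g^4)$ correction from $|c|^4 = 1 - 2\|u_\perp\|^2 + O(\|u_\perp\|^4)$, while the ``off-diagonal'' pieces coupling $\Pi_V u$ to $u_\perp$ are controlled by Cauchy--Schwarz in the form $\tfrac12\delta\|u_\perp\|^2 + \tfrac{1}{64\pi^3}\|\vec A\|^2_{\dot H^1} + O(\mathbf{g}_\chi^2\|u_\perp\|^2) + O(\mathbf{g}_\chi^4)$. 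These terms are absorbed into the remaining $\tfrac12\delta\|u_\perp\|^2$ and field-energy contributions, yielding $E_V \ge \mu_V + \Delta E_2(g) - O(\mathbf{g}_\chi^4)$. The a priori bounds $\|u_\perp\|, \|\vec A\|_{\dot H^1} = O(|g|)$ near the minimizer, needed to promote the cross-term estimates to an $O(\mathbf{g}_\chi^4)$ remainder, follow from the upper bound combined with the coercivity estimates developed in the proof of Theorem~\ref{thm:Pauli-Fierz}.

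The main obstacle is the paramagnetic cross term $-2g\,\mathrm{Re}\langle -i\vec{\nabla} u_\perp,(\hat\chi*\vec A)(u_V\otimes\xi)\rangle$ in the lower bound, which is a priori only $O(g^2)$. Controlling it without losing a power of $g$ when the singular $\chi_2/|k|\in L^{3,\infty}$ piece of $\chi$ is nontrivial is beyond the reach of standard H\"older and Sobolev estimates at this critical scaling; the Lorentz-space estimates underlying Lemma~\ref{lem:estimateChiAU}, together with the smallness condition~\eqref{eq:smallness_condition}, produce constants small enough to be absorbed into the field and gap terms. A parity argument exploiting the reality of $u_V$, the Coulomb gauge, and the radial symmetry of $V$ and $\chi$ eliminates any genuine $O(g^3)$ contribution, so the matching bounds hold with remainder $O(\mathbf{g}_\chi^4)$ and prove the expansion.
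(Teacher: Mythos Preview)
Your upper bound via the trial state $(u_V\otimes\xi_0,\vec A_\star)$ is correct and in fact matches what the paper ultimately computes. The gap is in the lower bound.

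The paper does not argue by matching upper and lower bounds. Instead it evaluates $E_V=\mathcal{E}_V(u_{\mathrm{gs}},\vec A_{\mathrm{gs}})$ directly at a minimizer (whose existence is Theorem~\ref{thm:Pauli-Fierz}) and expands term by term, relying crucially on Lemma~\ref{lm:lagrange-PF}. That lemma, obtained by projecting the Euler--Lagrange equation~\eqref{eq:PF-HVA-uc} onto $\mathrm{Ran}\,\Pi_V^\perp$ and inverting $H_V-E_V$ there, gives
\[
\|\varphi_{\mathrm{gs}}\|_{\mathcal{Q}_V}=\|\Pi_V^\perp u_{\mathrm{gs}}\|_{\mathcal{Q}_V}\le C_V\,\mathbf{g}_\chi^{\,2},
\qquad
\|\vec A_{\mathrm{gs}}\|_{\dot H^1}\le C_V\,\mathbf{g}_\chi.
\]
The point is the exponent~$2$ on $\varphi_{\mathrm{gs}}$, not~$1$: with this bound every ``off-diagonal'' piece you identify (paramagnetic cross term, spin cross term) is automatically $O(\mathbf{g}_\chi\cdot\mathbf{g}_\chi^2\cdot\mathbf{g}_\chi)=O(\mathbf{g}_\chi^4)$ by a single Cauchy--Schwarz together with Lemma~\ref{lem:estimateChiAU}, and no parity argument is needed.

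Your sketch claims only $\|u_\perp\|=O(|g|)$, and asserts that this follows from coercivity. It does not: the coercivity of Lemma~\ref{lem:coercivity-pauli-fierz} gives uniform boundedness of minimizing sequences, not smallness in $g$. Even granting $\|u_\perp\|=O(\mathbf{g}_\chi)$, the paramagnetic and spin cross terms are then only $O(\mathbf{g}_\chi^3)$, and the ``parity argument exploiting the reality of $u_V$, the Coulomb gauge, and radial symmetry'' cannot remove them: for a generic $(u,\vec A)$ in your lower bound, $u_\perp$ carries no symmetry. The alternative you outline---absorbing the cross terms into $\tfrac12\delta\|u_\perp\|^2+\tfrac{1}{64\pi^3}\|\vec A\|^2_{\dot H^1}$ via Cauchy--Schwarz---does produce a valid lower bound, but then only \emph{half} of the field energy remains for the diagonal minimization over $\vec A$, which changes the resulting quadratic correction by a factor~$2$ rather than by $O(\mathbf{g}_\chi^4)$. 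To make that route work one would have to borrow only an $O(\mathbf{g}_\chi^2)$ fraction of $\|\vec A\|^2_{\dot H^1}$, which forces an $O(1)$ multiple of $\|u_\perp\|_{H^1}^2$ onto the other side; absorbing that requires control of $\|u_\perp\|_{H^1}$ (not just $\|u_\perp\|_{L^2}$) by the gap, which is exactly what the Euler--Lagrange analysis in Lemma~\ref{lm:lagrange-PF} provides. In short, the missing ingredient is the $O(\mathbf{g}_\chi^2)$ bound on $\Pi_V^\perp u_{\mathrm{gs}}$, after which the expansion follows by direct evaluation without separate upper/lower bounds.
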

\begin{remark}
The asymptotic expansion at small coupling of the ground state energy of the Hamiltonian $\mathbb{H}$ in the standard model of non-relativistic QED has been computed in \cite{BachFrohlichPizzo09}.
\end{remark}
To prove Proposition \ref{prop:Asymtotic-Expansion-Ground-State-Energy}, we derive Euler-Lagrange type equations for minimizers $(u_{\mathrm{gs}},\vec{A}_{\mathrm{gs}})$, which we subsequently project to the vector space spanned by the electronic ground states and its orthogonal complement. The asymptotic expansion in Proposition~\ref{prop:Asymtotic-Expansion-Ground-State-Energy} then follows from estimating these equations.

Our last concern is to prove the convergence of the ground state energies in the ultraviolet limit. More precisely, suppose that the interaction between the electron and the field is cut-off in the ultraviolet, i.e.~that the Maxwell--Schr\"odinger energy functional is given by
\begin{equation}\label{eq:comput_E(u,A)Lambda_intro}
\mathcal{E}_{V,\Lambda}(u,\vec{A}) :=
\big\| \vec{\sigma}\cdot (-i\vec{\nabla} - g\hat\chi_\Lambda * \vec{A}) \, u \big\|^2_{L^{2}} + \langle u, V u\rangle_{L^2}
+\frac{1}{32\pi^3}\| \vec A \|^2_{\dot{H}^1},
\end{equation}
with $\chi_\Lambda=\chi\mathds{1}_{|k|\le\Lambda}$, for some ultraviolet parameter $\Lambda>0$. Define the ground state energies $E_{V,\Lambda}$  by
\begin{equation}\label{eq:GSenergy_Lambda}
E_{V,\Lambda}:=\inf_{(u,\vec{A})\in\mathcal{U}\times\mathcal{A}}\mathcal{E}_{V,\Lambda}(u,\vec{A}).
\end{equation}
The next proposition then shows that $E_{V,\Lambda}$ converges to $E_V$ as $\Lambda\to\infty$. 
\begin{proposition}[Ultraviolet limit of the ground state energies]\label{prop:conv-GS-en-PF_intro}
Suppose that $V$ satisfies Hypothesis \ref{condVPauliFierz} and that $\chi$ satisfies Hypothesis \ref{condChi}  and $32\pi^{3}aC^{2}g^{2}\|\chi_{2}/|k|\|_{L^{3,\infty}}^{2}<1$. Then
\begin{equation*}
E_{V,\Lambda} \underset{\Lambda\to\infty}{\longrightarrow} E_{V}.
\end{equation*}
\end{proposition}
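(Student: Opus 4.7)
I would establish the proposition by matching the exact lower bound $E_{V,\Lambda}\ge E_V$ (valid for every $\Lambda>0$) with the asymptotic upper bound $\limsup_{\Lambda\to\infty} E_{V,\Lambda}\le E_V$. The crux of both directions is a Fourier-truncation argument for the divergence-free potential~$\vec{A}$.

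\textbf{Lower bound.} Given $(u,\vec{A})\in\mathcal{U}\times\mathcal{A}$, I decompose $\vec{A}=\vec{A}^{\le}+\vec{A}^{>}$ in the Fourier variable, with $\bar{\mathcal{F}}(\vec{A}^{\le})=\mathds{1}_{|k|\le\Lambda}\bar{\mathcal{F}}(\vec{A})$ and $\bar{\mathcal{F}}(\vec{A}^{>})=\mathds{1}_{|k|>\Lambda}\bar{\mathcal{F}}(\vec{A})$. Both pieces remain in $\mathcal{A}$, since the Coulomb-gauge condition $k\cdot\bar{\mathcal{F}}(\vec{A})=0$ is preserved by Fourier multipliers. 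As $\chi_\Lambda$ vanishes on $\{|k|>\Lambda\}$, I have $\hat\chi_\Lambda*\vec{A}=\hat\chi_\Lambda*\vec{A}^{\le}$, and Plancherel gives the orthogonal splitting $\|\vec{A}\|^2_{\dot H^1}=\|\vec{A}^{\le}\|^2_{\dot H^1}+\|\vec{A}^{>}\|^2_{\dot H^1}$. Combining these two facts yields $\mathcal{E}_{V,\Lambda}(u,\vec{A})\ge\mathcal{E}_{V,\Lambda}(u,\vec{A}^{\le})$. Moreover, because $\chi$ and $\chi_\Lambda$ agree on $\{|k|\le\Lambda\}$ where $\bar{\mathcal F}(\vec{A}^{\le})$ is supported, one has $\hat\chi*\vec{A}^{\le}=\hat\chi_\Lambda*\vec{A}^{\le}$; therefore $\mathcal{E}_{V,\Lambda}(u,\vec{A}^{\le})=\mathcal{E}_V(u,\vec{A}^{\le})\ge E_V$. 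Taking the infimum over $(u,\vec{A})$ gives $E_{V,\Lambda}\ge E_V$.

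\textbf{Upper bound.} Fix $\varepsilon>0$ and pick $(u_\varepsilon,\vec{A}_\varepsilon)\in\mathcal{U}\times\mathcal{A}$ with $\mathcal{E}_V(u_\varepsilon,\vec{A}_\varepsilon)\le E_V+\varepsilon/2$. Let $\vec{A}_{\varepsilon,R}$ denote the Fourier truncation $\bar{\mathcal F}(\vec{A}_{\varepsilon,R})=\mathds{1}_{|k|\le R}\bar{\mathcal F}(\vec{A}_\varepsilon)$. Then $\vec{A}_{\varepsilon,R}\in\mathcal{A}$, $\vec{A}_{\varepsilon,R}\to\vec{A}_\varepsilon$ in $\dot H^1$ as $R\to\infty$, and by continuity of $\mathcal{E}_V(u_\varepsilon,\cdot)$ on $\dot H^1$ (a consequence of Lemma~\ref{lem:estimateChiAU}), I can choose $R$ large enough so that $\mathcal{E}_V(u_\varepsilon,\vec{A}_{\varepsilon,R})\le E_V+\varepsilon$. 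For every $\Lambda\ge R$ the Fourier support of $\vec{A}_{\varepsilon,R}$ lies in $\{|k|\le\Lambda\}$, so the identity $\hat\chi*\vec{A}_{\varepsilon,R}=\hat\chi_\Lambda*\vec{A}_{\varepsilon,R}$ (from the lower-bound argument) gives $\mathcal{E}_{V,\Lambda}(u_\varepsilon,\vec{A}_{\varepsilon,R})=\mathcal{E}_V(u_\varepsilon,\vec{A}_{\varepsilon,R})\le E_V+\varepsilon$. Hence $E_{V,\Lambda}\le E_V+\varepsilon$ for every $\Lambda\ge R$, and letting $\varepsilon\downarrow 0$ concludes.

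\textbf{Main step.} The structural observation driving both directions is that $\mathcal{E}_V$ and $\mathcal{E}_{V,\Lambda}$ coincide on the subspace of potentials $\vec{A}$ with Fourier support in $\{|k|\le\Lambda\}$, and that Fourier-truncating any $\vec{A}$ to this subspace can only decrease $\mathcal{E}_{V,\Lambda}$. The only technical input required, beyond this Fourier localisation, is the continuity of $\mathcal{E}_V$ in $\vec{A}$ used in the upper bound, which is an immediate consequence of the basic operator estimate of Lemma~\ref{lem:estimateChiAU}.
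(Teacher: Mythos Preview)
Your proof is correct and follows essentially the same route as the paper's. Both arguments rest on the identity $\hat\chi_\Lambda*\vec{A}=\hat\chi*\vec{A}_{\le\Lambda}$ (the paper packages this as Lemma~\ref{lm:Leb2} and the relation~\eqref{eq:rel_Lambda2}), which yields $E_{V,\Lambda}\ge E_V$ and reduces the upper bound to the continuity of $\vec{A}\mapsto\mathcal{E}_V(u_\varepsilon,\vec{A})$ in $\dot H^1$, established via Lemma~\ref{lem:estimateChiAU}. The only cosmetic difference is that you first truncate $\vec{A}_\varepsilon$ at a fixed radius $R$ and then take $\Lambda\ge R$, whereas the paper keeps the test pair $(u_\varepsilon,\vec{A}_\varepsilon)$ and sends $\Lambda\to\infty$ directly; the underlying estimate is the same.
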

Note that the conditions imposed in Proposition \ref{prop:conv-GS-en-PF_intro} are weaker than those ensuring the existence of a ground state in Theorem \ref{thm:Pauli-Fierz}.

\subsection{Organisation of the paper}
In the preliminary Section~\ref{sec:Prelim}, we state estimates on the electronic Hamiltonian. Most of the proofs can be found in the companion paper~\cite{BreteauxFaupinPayet22_1}. We also establish functional inequalities in Lorentz spaces used to handle the ultraviolet limit in the Maxwell-Schr\"odinger energy functional. Our main results are proved in Section~\ref{sec:Pauli-Fierz}: We first reduce the variational problem for \eqref{eq:quasi-cl}  to the minimization of the Maxwell--Schr\"odinger energy functional \eqref{eq:comput_E(u,A_intro} in Section~\ref{sec:MaxwellSchroedinger}. Existence of a minimizer for the Maxwell--Schr\"odinger energy as stated in Theorem~\ref{thm:Pauli-Fierz} is proved in Section~\ref{sec:PF-existence}. In Section~\ref{sec:PF-uniqueness}, we establish useful properties of the set of minimizers. We compute the expansion of the ground state energy for small coupling constants and prove Proposition \ref{prop:Asymtotic-Expansion-Ground-State-Energy} in Section~\ref{sec:PF-expansion}. Finally, the convergence of the ground state energies in the ultraviolet limit (Proposition \ref{prop:conv-GS-en-PF_intro}) is proved in Section~\ref{sec:UV-PF}. For the sake of completeness, the self-adjointness of the Pauli-Fierz Hamiltonian and its quadratic form domain are recalled in Appendix~\ref{app:Fock}.

\section{Preliminaries}\label{sec:Prelim}

In this preliminary section, we gather several technical estimates that will be used in the next section to prove our main results. The first subsection mainly concerns the electronic Hamiltonian $H_V$. We refer to the article \cite{BreteauxFaupinPayet22_1} for a proof of some of the stated results.
In a second subsection, we give some functional estimates in Lorentz spaces that will be used in a crucial way to control the interactions terms in the Maxwell--Schr\"odinger energy functional. 

\subsection{Estimates on the electronic part}\label{subsec:Estimate-elec}

Recall that our assumptions on the external potential $V$ of the electronic Hamiltonian $H_V=-\Delta+V$ have been introduced in Section \ref{subsec:elec}. We begin with a few remarks showing that $H_V$ is well-defined and that $\mathcal{Q}(H_V)=\mathcal{Q}(H_{V_+}) = \mathcal{Q}(H_{V_1})$ with $V_1$ as in Hypothesis \ref{condVPauliFierz}.

First, $V_-$ is form bounded with respect to $\sqrt{-\Delta}$, by Hypothesis \ref{condVPauliFierz}. 
This implies by a well-known argument that $V_-$ is also form bounded with respect to $H_{V_+}$ with a relative bound less than $1$, and hence the KLMN Theorem (see \cite[Theorem X.17]{ReedSimonII}) yields that $H_V$ identifies with a semi-bounded self-adjoint operator with form domain $\mathcal{Q}(H_V) = \mathcal{Q}(H_{V_+})$.

Next, Hypothesis \ref{condVPauliFierz}(ii) implies that $V_2$ is relatively form bounded with respect to $\sqrt{-\Delta}$ with relative bound $0$. Indeed, for $R$ sufficiently large, $V_2\mathds{1}_{|x|\ge R}$ belongs to~$L^\infty(\mathbb{R}^3)$ since~$V_2(x)\to0$ as $|x|\to\infty$, while $V_2\mathds{1}_{|x|\le R}$ belongs to~$L^{3/2}(B_R)$ with $B_R:=\{x\in\mathbb{R}^3\, | \, |x|\le R\}$, since~$V_2$ is in~$L^{3/2}_{\mathrm{loc}}(\mathbb{R}^3)$. Therefore $V_2$ belongs to~$L^{3/2}(\mathbb{R}^3) + L^\infty(\mathbb{R}^3)$ and hence we can apply \cite[Theorem X.19]{ReedSimonII} to deduce that $V_2$ is infinitesimally form-bounded with respect to $\sqrt{-\Delta}$.  In turn, since $V_+-V_1=V_2+V_-$ is form bounded with respect to $\sqrt{-\Delta}$, it is not difficult to verify that $\mathcal{Q}(H_{V_+}) = \mathcal{Q}(H_{V_1})$.

We recall a version of the IMS localization formula (see e.g. \cite{CFKS87}), used to split the contributions to the energy for large~$x$ and for small~$x$. We state it for a magnetic kinetic energy since this context is relevant in Section \ref{sec:Pauli-Fierz} to study the Maxwell--Schr\"odinger energy functional.

\begin{lemma}[Magnetic IMS localization formula]\label{lem:MagneticIMS}
Let $\vec{A}\in L_{\mathrm{loc}}^{2}(\mathbb{R}^{3};\mathbb{R}^{3})$ and $\eta,\,\tilde{\eta}:\mathbb{R}^3\to\mathbb{R}$ be differentiable with bounded first derivatives and such that $\eta^{2}+\tilde{\eta}^{2}=1$. Let
$u\in H_{\vec{A}}^{1}=\{\tilde{u}\in L^{2}\mid(-i\vec{\nabla}-\vec{A})\tilde{u}\in L^{2}\}$.
Then
\begin{equation}
\|(-i\vec{\nabla}-\vec{A}) u\|^2 
 =\|(-i\vec{\nabla}-\vec{A})\,\eta \, u\|^2+\|(-i\vec{\nabla}-\vec{A}) \, \tilde{\eta} \, u\|^2 
+\langle u,(|\vec{\nabla}\eta|^{2}+|\vec{\nabla}\tilde{\eta}|^{2})u\rangle\,.\label{eq:IMS-magnetic}
\end{equation}
\end{lemma}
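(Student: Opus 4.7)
The identity is a standard pointwise--integration computation once the product rule for the magnetic gradient is written down. My plan is the following.

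Write $D_{\vec A}:=-i\vec\nabla-\vec A$ and observe that for any bounded Lipschitz $\varphi:\mathbb{R}^3\to\mathbb{R}$ and $u\in H^1_{\vec A}$, the distributional product rule
\begin{equation*}
D_{\vec A}(\varphi u)=-i(\vec\nabla\varphi)\,u+\varphi\,D_{\vec A}u
\end{equation*}
makes sense in $L^2$: the first summand lies in $L^2$ because $\vec\nabla\varphi$ is bounded and $u\in L^2$, and the second lies in $L^2$ because $\varphi$ is bounded and $D_{\vec A}u\in L^2$. In particular $\varphi u\in H^1_{\vec A}$. Apply this with $\varphi=\eta$ and $\varphi=\tilde\eta$.

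Next, expand each of the squared $L^2$-norms by polarisation. For $\varphi\in\{\eta,\tilde\eta\}$, I would write
\begin{equation*}
\|D_{\vec A}(\varphi u)\|^2
=\|(\vec\nabla\varphi)u\|^2+\|\varphi\,D_{\vec A}u\|^2
+2\,\mathfrak{Re}\,\langle -i(\vec\nabla\varphi)u,\varphi\,D_{\vec A}u\rangle,
\end{equation*}
all three terms being well defined because $\vec\nabla\varphi,\varphi\in L^\infty$ and $u,D_{\vec A}u\in L^2$. Summing the two identities ($\varphi=\eta$ and $\varphi=\tilde\eta$) and factorising the last two groups of terms, one obtains
\begin{equation*}
\|D_{\vec A}(\eta u)\|^2+\|D_{\vec A}(\tilde\eta u)\|^2
=\langle u,(|\vec\nabla\eta|^2+|\vec\nabla\tilde\eta|^2)u\rangle
+\langle D_{\vec A}u,(\eta^2+\tilde\eta^2)D_{\vec A}u\rangle
+2\,\mathfrak{Re}\,\langle -i(\eta\vec\nabla\eta+\tilde\eta\vec\nabla\tilde\eta)u,D_{\vec A}u\rangle.
\end{equation*}

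Finally, conclude using the two consequences of $\eta^2+\tilde\eta^2=1$: the pointwise identity forces $\eta^2+\tilde\eta^2\equiv1$, so the second term reduces to $\|D_{\vec A}u\|^2$; and differentiating the relation gives $\eta\vec\nabla\eta+\tilde\eta\vec\nabla\tilde\eta=\tfrac12\vec\nabla(\eta^2+\tilde\eta^2)=\vec 0$, so the cross term vanishes. Rearranging yields \eqref{eq:IMS-magnetic}. There is no genuine obstacle here; the only point deserving care is the justification that the product rule and the integration by parts implicit in the polarisation step are valid for $\vec A\in L^2_{\mathrm{loc}}$ and $u\in H^1_{\vec A}$, which is handled directly by the boundedness of $\eta,\tilde\eta$ and their first derivatives.
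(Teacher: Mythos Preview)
Your proof is correct and follows essentially the same route as the paper's: both rest on the product rule $D_{\vec A}(\varphi u)=-i(\vec\nabla\varphi)\,u+\varphi\,D_{\vec A}u$ (equivalently, the commutator $[D_{\vec A},\eta]=-i\vec\nabla\eta$ that the paper invokes), then sum over $\eta,\tilde\eta$ and use $\eta\vec\nabla\eta+\tilde\eta\vec\nabla\tilde\eta=\tfrac12\vec\nabla(\eta^{2}+\tilde\eta^{2})=0$ to kill the cross term. One caveat worth recording: your displayed identity actually rearranges to
\[
\|D_{\vec A}u\|^{2}=\|D_{\vec A}(\eta u)\|^{2}+\|D_{\vec A}(\tilde\eta u)\|^{2}-\langle u,(|\vec\nabla\eta|^{2}+|\vec\nabla\tilde\eta|^{2})u\rangle,
\]
i.e.\ with a \emph{minus} sign on the localization error; this is the standard IMS formula, so the plus sign in \eqref{eq:IMS-magnetic} as printed is a typo in the statement rather than a flaw in your argument.
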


\begin{proof}
Using the commutator $[-i\vec{\nabla}-\vec{A},\eta]=(-i\vec{\nabla}\eta)$
three times yields
\begin{multline}
\langle (-i\vec{\nabla}-\vec{A}) \, u,(-i\vec{\nabla}-\vec{A}) \,\eta^{2}u\rangle
=\langle (-i\vec{\nabla}-\vec{A}) \, \eta u,(-i\vec{\nabla}-\vec{A}) \,\eta u\rangle-\langle u,(-i\vec{\nabla}\eta)^{2}u\rangle\\
+\frac{1}{2}\langle u,(-(-i\vec{\nabla}\eta^{2})(-i\vec{\nabla}-\vec{A})+(-i\vec{\nabla}-\vec{A})\,(-i\vec{\nabla}\eta^{2}))u\rangle\,.\label{eq:IMS-part-chi-R}
\end{multline}
Summing (\ref{eq:IMS-part-chi-R}) and the same equation with
$\eta$ replaced by $\tilde{\eta}$ leads to
\begin{multline*}
\|(-i\vec{\nabla}-\vec{A}) u\|^2 
 =\|(-i\vec{\nabla}-\vec{A})\,\eta \, u\|^2+\|(-i\vec{\nabla}-\vec{A}) \, \tilde{\eta} \, u\|^2 
+\langle u,(|\vec{\nabla}\eta|^{2}+|\vec{\nabla}\tilde{\eta}|^{2})u\rangle\\
+\big\langle u,\big(-(-i\vec{\nabla}\frac{\eta^{2}+\tilde{\eta}^{2}}{2})(-i\vec{\nabla}-\vec{A})+(-i\vec{\nabla}-\vec{A})(-i\vec{\nabla}\frac{\eta^{2}+\tilde{\eta}^{2}}{2})\big)u\big\rangle
\end{multline*}
which implies the result since $\eta^{2}+\tilde{\eta}^{2}$
is constant.
\end{proof}

The following lemma shows that, for confining potentials~$V$, the gap $\mu_{V_1}-\mu_V$ can be made as large as we want, provided that the potential $V_1$ is suitably chosen. The proof can be found in the companion paper~\cite{BreteauxFaupinPayet22_1}.

\begin{lemma}\label{lm:confining_intro}
Suppose that $V=V_+-V_-$ is such that
\begin{enumerate}[label=(\roman*)]
\item $V_+\in L^1_{\mathrm{loc}}(\mathbb{R}^3)$, \vspace{0,1cm}
\item $V_-\in L^{3/2}_{\mathrm{loc}}(\mathbb{R}^3)$, \vspace{0,1cm}
\item $V(x)\to\infty$ as $|x|\to\infty$.
\end{enumerate}
Then, for all $C>0$, there exist a decomposition $V=V_{1,C}+V_{2,C}$ as in Hypothesis~\ref{condVPauliFierz} such that, moreover,
\begin{equation*}
\mu_{V_{1,C}}-\mu_V\ge C.
\end{equation*}
\end{lemma}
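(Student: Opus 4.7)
The plan is to construct $V_{1,C}$ by adding to $V_+$ a positive bump supported in a large ball, chosen just large enough that the resulting potential is pointwise bounded below by $\mu_V+C+1$. The confinement $V(x)\to\infty$ handles everything outside the bump, while the bump itself makes up for the region where $V_+$ may still be small. No sophisticated spectral theory is needed: a pointwise lower bound on $V_{1,C}$ immediately gives the same lower bound on $\mu_{V_{1,C}}$.

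Concretely, I set $N:=\mu_V+C+1$. Using $V(x)\to\infty$, I choose $R>0$ so large that $V(x)\ge N$ for all $|x|\ge R$. For such $x$ one automatically has $V_-(x)=0$ and hence $V_+(x)=V(x)\ge N$. With the cutoff $\eta_R$ from \eqref{eq:defetaR}, I then define
\[
V_{1,C}:=V_+ + N\eta_R^2,\qquad V_{2,C}:=V-V_{1,C}=-V_- - N\eta_R^2 .
\]
On $\{|x|\le R\}$ one has $\eta_R\equiv 1$, whence $V_{1,C}\ge N$; on $\{|x|>R\}$ one already has $V_+\ge N$, so $V_{1,C}\ge N$ again. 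Therefore $V_{1,C}\ge N$ everywhere on $\mathbb{R}^3$, which yields $\mu_{V_{1,C}}\ge N=\mu_V+C+1$, hence the desired gap.

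It remains to verify that $(V_{1,C},V_{2,C})$ fits Hypothesis~\ref{condVPauliFierz}. The function $V_{1,C}$ is non-negative and lies in $L^1_{\mathrm{loc}}(\mathbb{R}^3;\mathbb{R}^+)$ because $V_+$ does (by assumption) and $N\eta_R^2$ is smooth and bounded. The function $V_{2,C}$ lies in $L^{3/2}_{\mathrm{loc}}(\mathbb{R}^3)$ because $V_-$ does and $N\eta_R^2$ is smooth and bounded. Finally, $V_{2,C}(x)\to 0$ as $|x|\to\infty$: the term $N\eta_R^2$ has compact support, and $V_-$ itself is in fact compactly supported, since $V(x)\to\infty$ forces $V_-(x)=0$ for $|x|$ large enough. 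There is no substantive obstacle in the argument; the single nontrivial ingredient is the use of the confinement to simultaneously fix $R$ and ensure $V_+\ge N$ outside $B_R$, after which everything reduces to the elementary pointwise inequality $V_{1,C}\ge N$.
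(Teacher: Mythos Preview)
Your argument is essentially correct, but there is one small oversight: you set $N:=\mu_V+C+1$ and then tacitly use $N>0$ in two places. First, the step ``$V(x)\ge N$ for $|x|\ge R$ implies $V_-(x)=0$ and $V_+(x)=V(x)\ge N$'' needs $N>0$. Second, and more importantly, if $N<0$ then $V_{1,C}=V_++N\eta_R^2$ can be strictly negative on $\{|x|\le R\}\cap\{V_+<|N|\}$, so $V_{1,C}\notin L^1_{\mathrm{loc}}(\mathbb{R}^3;\mathbb{R}^+)$ and condition~(i) of Hypothesis~\ref{condVPauliFierz} fails. The fix is immediate: either note that it suffices to prove the statement for $C$ large (a decomposition valid for $C'>C$ is a fortiori valid for $C$), or simply replace your $N$ by $\max(\mu_V+C+1,1)$. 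With this adjustment the proof goes through exactly as you wrote it.

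The paper itself does not prove this lemma but defers to the companion article~\cite{BreteauxFaupinPayet22_1}, so a direct comparison is not possible from the present text. Your construction---lift $V_+$ by a nonnegative compactly supported bump so that $V_{1,C}\ge N$ pointwise everywhere, then read off $\mu_{V_{1,C}}\ge N$ from the variational principle---is the natural elementary argument and is very likely close to what appears there.
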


To conclude this section, we give a lemma, proved again in the companion paper~\cite{BreteauxFaupinPayet22_1}, which is useful to prove the existence of minimizers for the energy functional studied in Section \ref{sec:Pauli-Fierz}.

\begin{lemma}
\label{lem:LowerSemiContDeltaV}Suppose that $V$ satisfies Hypothesis~\ref{condVPauliFierz}. Let $(u_{j})_{j\in\mathbb{N}}$ be a bounded sequence in~$H^{1}(\mathbb{R}^{3})$ which  converges weakly to~$u_{\infty}$ in~$H^1(\mathbb{R}^3)$, and strongly in~$L^{2}(\mathbb{R}^{3})$. Then 
\begin{equation*}
\langle u_{\infty},(-\Delta+V)u_{\infty}\rangle\leq\liminf_{j\to\infty}\langle u_{j},(-\Delta+V)u_{j}\rangle\,.
\end{equation*}
\end{lemma}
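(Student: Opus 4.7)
The plan is to use the decomposition $V=V_1+V_2$ provided by Hypothesis~\ref{condVPauliFierz} and to treat separately the three contributions to
\[
\langle u_j,(-\Delta+V)u_j\rangle = \|\nabla u_j\|_{L^2}^2 + \int V_1\,|u_j|^2 + \int V_2\,|u_j|^2 \,.
\]
The first two are nonnegative and will each be shown weakly lower semi-continuous in the limit, while the $V_2$ term (which is sign-changing) will be shown to converge outright along the full sequence.

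For the kinetic piece, weak $H^1$ convergence gives weak $L^2$ convergence of the gradients, whence $\|\nabla u_\infty\|_{L^2}^2\le\liminf_j\|\nabla u_j\|_{L^2}^2$ by the weak lower semi-continuity of the $L^2$-norm. For the $V_1$ piece, extracting along a subsequence the almost-everywhere convergence of $u_{j_k}$ to $u_\infty$ from the strong $L^2$ convergence, Fatou's lemma applied to the nonnegative integrands $V_1|u_{j_k}|^2$ yields $\int V_1|u_\infty|^2\le\liminf_k\int V_1|u_{j_k}|^2$.

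The core difficulty lies with the $V_2$ term: $V_2$ is only locally in $L^{3/2}$ and sign-changing, and the combination of strong $L^2$ convergence with uniform $L^6$ bounds (via Sobolev $H^1\hookrightarrow L^6$) does not yield an admissible Hölder triple against $L^{3/2}$, so a splitting plus density argument is needed. Given $\varepsilon>0$, pick $R$ so large that $|V_2|\le\varepsilon$ on $\{|x|\ge R\}$ (using $V_2(x)\to 0$ at infinity); the tail contributes at most $2\varepsilon\sup_j\|u_j\|_{L^2}^2$. On the ball $B_R$, approximate $V_2$ in $L^{3/2}(B_R)$ by bounded functions $V_2^{(n)}$: the error is bounded uniformly in $j$ by $\|V_2-V_2^{(n)}\|_{L^{3/2}(B_R)}\sup_j\|u_j\|_{L^6}^2$, which is small for $n$ large, while the bounded approximant is handled via the polarization identity $|u_j|^2-|u_\infty|^2=(u_j-u_\infty)\overline{u_j}+u_\infty\,\overline{u_j-u_\infty}$ and Cauchy--Schwarz to obtain a bound by $\|V_2^{(n)}\|_\infty\|u_j-u_\infty\|_{L^2}(\|u_j\|_{L^2}+\|u_\infty\|_{L^2})$, which vanishes as $j\to\infty$ by strong $L^2$ convergence.

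To conclude, select a subsequence realizing $\liminf_j\langle u_j,(-\Delta+V)u_j\rangle$, extract a further almost-everywhere convergent subsequence, combine the two lower-semi-continuous contributions via the elementary inequality $\liminf(x_k+y_k)\ge\liminf x_k+\liminf y_k$, and add the (now converging) $V_2$ contribution to obtain the desired inequality.
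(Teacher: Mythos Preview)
Your argument is correct. The paper itself does not give a proof of this lemma, deferring instead to the companion article~\cite{BreteauxFaupinPayet22_1}, so there is no in-text proof to compare against; your decomposition into the weakly lower semicontinuous pieces $\|\nabla u_j\|_{L^2}^2$ and $\int V_1|u_j|^2$ (handled by weak $L^2$ lower semicontinuity and Fatou, respectively) together with the convergence of $\int V_2|u_j|^2$ via the splitting $V_2\in L^{3/2}(B_R)+L^\infty(\{|x|\ge R\})$ is the natural route and is carried out cleanly.

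One small remark for clarity: in the final step, since the $V_2$ contribution converges along the \emph{entire} sequence (your argument for it uses only strong $L^2$ convergence and the uniform $H^1$ bound, not a.e.\ convergence), you may separate it off first and then apply $\liminf(x_k+y_k)\ge\liminf x_k+\liminf y_k$ to the two nonnegative pieces along the chosen sub-subsequence; this avoids any ambiguity about combining three $\liminf$'s.
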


\subsection{Functional inequalities in Lorentz spaces}

In the proof of our main results, we use in a crucial way some functional inequalities in Lorentz spaces that we present in this section. For $1\le p<\infty$, the Lorentz spaces $L^{p,\infty}=L^{p,\infty}(\mathbb{R}^d)$ are defined as the set of (equivalence classes of) measurable functions $f:\mathbb{R}^d\to\mathbb{C}$ such that \eqref{eq:norm_weakLp} holds.

More generally, for $1\le p<\infty$ and $1\le q\leq\infty$, the Lorentz spaces $L^{p,q}=L^{p,q}(\mathbb{R}^d)$ are defined as the set of (equivalence classes of) measurable functions $f:\mathbb{R}^d\to\mathbb{C}$ such that the quasi-norm
\begin{equation*}
\|f\|_{L^{p,q}}:= p^{1/q} \| \lambda (  \{|f|>t\})^{1/p}\,t\|_{L^q ((0,\infty),\diff t / t)}  
\end{equation*}
is finite.

For $1\le p<\infty$ and $1\le q_1\leq q_2 \le \infty$, the continuous embedding $
L^{p,q_1}\subseteq L^{p,q_2}$ holds.
Moreover $L^{p,p}$ identifies with the Lebesgue space~$L^p$. We use the following generalizations of H\"older and Young's inequality in Lorentz spaces, see \cite{ONeil63,Yap69,Lemarie-Rieusset02,BezLeeNakamuraSawano17} or \cite[1.4.19]{Grafakos08}. 

For $1\leq p_1,p_2<\infty$, $1\le q_1,q_2\le\infty$, H\"older's inequality states that
\begin{equation}\label{eq:weak_Holder}
\|f_1f_2\|_{L^{p,q}}\lesssim\|f_1\|_{L^{p_1,q_1}}\|f_2\|_{L^{p_2,q_2}},\qquad \frac{1}{p}=\frac{1}{p_1}+\frac{1}{p_2}\,, \quad \frac{1}{q}=\frac{1}{q_1}+\frac{1}{q_2}\,,
\end{equation}
whenever the right hand side is finite.

Young's inequality states that, for $1<p,p_1,p_2<\infty$, $1\le q_1,q_2\le\infty$, 
\begin{equation}\label{eq:weak_Young}
\|f_1*f_2\|_{L^{p,q}}\lesssim\|f_1\|_{L^{p_1,q_1}}\|f_2\|_{L^{p_2,q_2}}\,, \qquad 1+\frac{1}{p}=\frac{1}{p_1}+\frac{1}{p_2}\,,\quad \frac{1}{q}=\frac{1}{q_1}+\frac{1}{q_2}\,.
\end{equation}

\subsubsection{Functional inequalities in the Maxwell--Schr\"odinger setting} 

We present estimates which will play an important role in the next section. We work in the setting of the Maxwell--Schr\"odinger energy functional introduced in \eqref{subsec:MS}, with $u$ in $\mathcal{Q}_V\subset L^2(\mathbb{R}^3;\mathbb{C}^2)$ (see \eqref{eq:QV_intro}) and $A$ in $\dot{H}^1(\mathbb{R}^3;\mathbb{R}^3)$.

\begin{lemma}\label{lem:estimateChiAU}Under Hypothesis~\ref{condChi} on~$\chi=\chi_1+\chi_2$ with~$\chi_1/|k|$ in~$L^2$ and~$\chi_2/|k|$ in~$L^{3,\infty}$, there exists a universal constant $C>0$ such that,
\begin{multline}
\forall (u,\vec A) \in H^1\times \dot{H}^1, \;
\|(\hat\chi * \vec A)u\|_{L^2}
\leq C  \|\vec A\|_{\dot{H}^1}\left(\left\|\frac{\chi_1}{|k|}\right\|_{L^2} \|u\|_{L^2}+\left\|\frac{\chi_2}{|k|}\right\|_{L^{3,\infty}}\|u\|_{\dot{H}^{1/2}} \right)  \,, \label{eq:25}
\end{multline}
and
\begin{equation}\label{eq:26}
\forall (u,\vec A) \in \mathcal{Q}_V \times \dot{H}^1, \quad
\|(\hat\chi * \vec A)u\|_{L^2}
 \leq C \left\|\frac{\chi}{|k|}\right\|_{L^2+L^{3,\infty}} \|\vec A\|_{\dot{H}^1} \|u\|_{L^2}^{1/2} \|u\|_{\mathcal{Q}_V}^{1/2} \,.
\end{equation}
\end{lemma}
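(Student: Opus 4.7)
The plan is to split $\chi = \chi_1 + \chi_2$ according to the hypothesis and estimate the two contributions to $\hat\chi * \vec A$ separately. By the convention \eqref{eq:convfourier},
$\hat\chi * \vec A = (2\pi)^{-3}\mathcal{F}\bigl(\chi\,\bar{\mathcal{F}}(\vec A)\bigr)$,
and the factorization $\chi\,\bar{\mathcal{F}}(\vec A) = (\chi/|k|)\cdot(|k|\bar{\mathcal{F}}(\vec A))$ exposes the natural pairing, since $|k|\bar{\mathcal{F}}(\vec A) \in L^2$ with norm comparable to $\|\vec A\|_{\dot H^1}$.

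For the regular piece, since $\chi_1/|k| \in L^2$, the Cauchy--Schwarz inequality yields $\chi_1\,\bar{\mathcal{F}}(\vec A) \in L^1$, so $\hat\chi_1 * \vec A \in L^\infty$ with $\|\hat\chi_1 * \vec A\|_{L^\infty} \lesssim \|\chi_1/|k|\|_{L^2}\,\|\vec A\|_{\dot H^1}$. Multiplying by $u$ and taking the $L^2$ norm produces the first term on the right-hand side of \eqref{eq:25}. For the critical piece, I would apply the H\"older inequality in Lorentz spaces \eqref{eq:weak_Holder} with exponents $(p_1,q_1)=(3,\infty)$ and $(p_2,q_2)=(2,2)$ to obtain $\chi_2\,\bar{\mathcal{F}}(\vec A) \in L^{6/5,2}$ with norm $\lesssim \|\chi_2/|k|\|_{L^{3,\infty}}\|\vec A\|_{\dot H^1}$. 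The Hausdorff--Young inequality in Lorentz spaces, $\mathcal{F}: L^{p,q}\to L^{p',q}$ for $1<p<2$, applied with $p=6/5$, $q=2$, then gives $\hat\chi_2 * \vec A \in L^{6,2}$ with the same bound. A further application of H\"older in Lorentz spaces with exponents $(6,2)$ and $(3,2)$, followed by the embedding $L^{2,1}\hookrightarrow L^2$, yields
$\|(\hat\chi_2 * \vec A)\,u\|_{L^2} \lesssim \|\hat\chi_2 * \vec A\|_{L^{6,2}}\,\|u\|_{L^{3,2}}$,
and the refined Sobolev embedding $\dot H^{1/2}(\mathbb{R}^3)\hookrightarrow L^{3,2}(\mathbb{R}^3)$ completes the second contribution in \eqref{eq:25}.

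The estimate \eqref{eq:26} is then derived from \eqref{eq:25} by means of the interpolation inequality $\|u\|_{\dot H^{1/2}}\le \|u\|_{L^2}^{1/2}\|u\|_{\dot H^1}^{1/2}$, combined with the trivial bound $\|u\|_{L^2}\le\|u\|_{L^2}^{1/2}\|u\|_{\mathcal{Q}_V}^{1/2}$ and the chain $\|u\|_{\dot H^1}\le\|u\|_{H^1}\le\|u\|_{\mathcal{Q}_V}$; taking the infimum over all admissible decompositions of $\chi/|k|$ then collapses the two factors into $\|\chi/|k|\|_{L^2+L^{3,\infty}}$.

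The main subtlety is that $\chi_2/|k|$ lies exactly at the critical threshold $L^{3,\infty}$ and not in $L^3$, so the classical Hausdorff--Young, H\"older, and Sobolev inequalities in ordinary Lebesgue spaces are insufficient. The argument genuinely exploits the Lorentz-scale refinements recalled in \eqref{eq:weak_Holder}--\eqref{eq:weak_Young} and the Sobolev embedding into a Lorentz target space; this is precisely what enables the ultraviolet singularity to be treated without imposing a cutoff in the subsequent analysis.
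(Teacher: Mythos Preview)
Your proof is correct. The treatment of the $\chi_1$ piece and the derivation of \eqref{eq:26} from \eqref{eq:25} match the paper's argument essentially verbatim.

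For the critical $\chi_2$ piece your route differs slightly from the paper's. You pass to physical space via the Lorentz-space Hausdorff--Young inequality to obtain $\hat\chi_2*\vec A\in L^{6,2}$, then pair with $u\in L^{3,2}$ using the refined Sobolev embedding $\dot H^{1/2}\hookrightarrow L^{3,2}$. The paper instead stays on the Fourier side throughout: it writes $\mathcal{F}\bigl((\hat\chi_2*\vec A)u\bigr)$ as a convolution $\bigl(\tfrac{\chi_2}{|k|}\,|k|\bar{\mathcal{F}}\vec A\bigr)*\bar{\mathcal{F}}u$, places the first factor in $L^{6/5,2}$ and the second in $L^{3/2,2}$ (via the factorization $\bar{\mathcal{F}}u=|k|^{-1/2}\cdot|k|^{1/2}\bar{\mathcal{F}}u$ and H\"older in Lorentz), and applies Young's inequality \eqref{eq:weak_Young} to land in $L^{2,1}\subset L^2$. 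The two arguments are Fourier duals of one another and use the same Lorentz exponents; the paper's version has the mild advantage of using only the tools \eqref{eq:weak_Holder}--\eqref{eq:weak_Young} already recorded there, whereas your version invokes two additional (standard) facts. Your packaging, on the other hand, isolates the intermediate estimate $\|\hat\chi_2*\vec A\|_{L^{6,2}}\lesssim\|\chi_2/|k|\|_{L^{3,\infty}}\|\vec A\|_{\dot H^1}$, which is a clean statement in its own right.
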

\begin{proof}In this proof $X\lesssim Y$ means that there is a universal constant $c$ such that $X\leq c Y$. 
H\"older and Young's inequalities are sufficient to estimate 
\begin{multline*}
\|(\hat \chi_{1}*\vec{A})u\|_{L^{2}}
\lesssim\|(\chi_{1}\,\overline{\mathcal{F}}\vec{A})*\mathcal{\overline{\mathcal{F}}}u\|_{L^{2}}
\leq\|\chi_{1}\,\mathcal{\overline{\mathcal{F}}}\vec{A}\|_{L^{1}} \, \|u\|_{L^{2}}\\
\leq\|\chi_{1}/|k|\|_{L^{2}} \, \||k|\,\overline{\mathcal{F}}\vec{A}\|_{L^{2}} \, \|u\|_{L^{2}}\leq\|\chi_{1}/|k|\|_{L^{2}} \, \|\vec{A}\|_{\dot{H}^{1}} \,  \|u\|_{L^{2}}.
\end{multline*}
Similarly,
\[
\|(\hat \chi_{2}*\vec{A})u\|_{L^{2}}\lesssim \Big\|(\frac{\chi_{2}}{|k|}|k|\overline{\mathcal{F}}\vec{A})*(\frac{1}{|k|^{1/2}}(\overline{\mathcal{F}}u)^{1/2}|k|^{1/2}(\overline{\mathcal{F}}u)^{1/2})\Big\|_{L^{2}},
\]
which can be estimated using the H\"older and Young's inequalities in Lorentz spaces, see \eqref{eq:weak_Holder}--\eqref{eq:weak_Young} 
(or the Brascamp-Lieb inequality in Lorentz spaces, see~\cite{BennettCarberyChristTao08,BezLeeNakamuraSawano17}): for any~$\varphi$ in~$L^{2}(\mathbb{R}^{3};\mathbb{C}^2)$,
\begin{multline*}
\Big|\iint\varphi(k')\Big(\frac{\chi_{2}(k'-k)}{|k'-k|}|k'-k|\overline{\mathcal{F}}\vec{A}(k'-k)\Big) \Big(\frac{1}{|k|^{1/2}}|k|^{1/2}\overline{\mathcal{F}}u \Big)\,\diff k \diff k' \Big|\\
\lesssim \|\varphi\|_{L^{2}} \, \Big\|\frac{\chi_{2}}{|k|}\Big\|_{L^{3,\infty}}  
\||k|\overline{\mathcal{F}}\vec{A}\|_{L^{2}}  \, \Big\|\frac{1}{|k|^{1/2}}\Big\|_{L^{6,\infty}}  
\||k|^{1/2}\overline{\mathcal{F}}u\|_{L^{2}}.
\end{multline*}
By duality, one gets the estimate:
\[
\|(\hat{\chi}_{2}*\vec{A})u\|_{L^{2}}\lesssim \Big\|\frac{\chi_{2}}{|k|}\Big\|_{L^{3,\infty}}\|\vec{A}\|_{\dot{H}^{1}} \|u\|_{\dot{H}^{1/2}}\,.
\]
This proves \eqref{eq:25}. H\"older's inequality then yields \eqref{eq:26}, since
\[
\|u\|_{\dot{H}^{1/2}} 
\lesssim \left\| | \overline{\mathcal{F}}u|^{1/2}|k|^{1/2} |\overline{\mathcal{F}}u|^{1/2} \right\|_{L^2} 
\lesssim  \left\| |\overline{\mathcal{F}}u|^{1/2} \right\|_{L^{4}} \, \left\||k|^{1/2} |\overline{\mathcal{F}}u|^{1/2} \right\|_{L^{4}} 
\lesssim \|u\|_{H^{1}}^{1/2} \, \|u\|^{1/2}_{L^{2}}.
\]
This ends the proof.
\end{proof}

\begin{lemma}\label{lem:estimateChikFu1u2}
Under Hypothesis~\ref{condChi} on~$\chi$, there exists a universal constant $C>0$ such that, for all tempered distribution $w$ such that $\mathcal{F}w$ is in $L^\infty \cap L^{6,2}$,
\begin{align}\label{eq:27}
\|\hat\chi * w\|_{\dot{H}^{-1}}
\leq C \Big\|\frac{\chi}{|k|}\Big\|_{L^2+L^{3,\infty}} \|\mathcal{F}w\|_{L^\infty \cap L^{6,2}} \,,
\end{align}
and for all $u_1$ in $L^2$, $u_2$ in $H^1$, and $0<\Lambda\leq \infty$,
\begin{align}\label{eq:28}
\|\mathds{1}_{|k|\leq \Lambda}\mathcal{F}(u_1u_2)\|_{L^\infty \cap L^{6,2}}
\leq C  \|u_1\|_{L^2} \|u_2\|_{H^{1}} \,.
\end{align}
\end{lemma}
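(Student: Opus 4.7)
My approach splits along the two estimates. For~\eqref{eq:27} I plan to dualize in Fourier variables; for~\eqref{eq:28} I plan to invoke the Lorentz-space refinement of the Hausdorff--Young inequality together with the refined Sobolev embedding $H^1\hookrightarrow L^3$. In both cases the main tools are already available: H\"older's inequality \eqref{eq:weak_Holder} and Young's inequality \eqref{eq:weak_Young} in Lorentz spaces.

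For~\eqref{eq:27}, the convolution convention \eqref{eq:convfourier} yields $\bar{\mathcal F}(\hat\chi*w) = \chi\,\bar{\mathcal F}(w)$, hence by Parseval
\begin{equation*}
\|\hat\chi*w\|_{\dot H^{-1}}^{2}\;\simeq\;\Big\|\frac{\chi}{|k|}\bar{\mathcal F}(w)\Big\|_{L^{2}}^{2}\,.
\end{equation*}
I then decompose $\chi/|k| = g_{1}+g_{2}$ with $g_{1}\in L^{2}$ and $g_{2}\in L^{3,\infty}$ realizing the infimum (up to an arbitrarily small error) of the $L^{2}+L^{3,\infty}$ norm. Cauchy--Schwarz together with $|\bar{\mathcal F}(w)(k)|=|\mathcal F(w)(-k)|$ gives $\|g_{1}\bar{\mathcal F}(w)\|_{L^{2}}\le \|g_{1}\|_{L^{2}}\|\mathcal F(w)\|_{L^{\infty}}$, and H\"older in Lorentz spaces with $\tfrac12=\tfrac13+\tfrac16$ and $\tfrac12=\tfrac{1}{\infty}+\tfrac12$ gives $\|g_{2}\bar{\mathcal F}(w)\|_{L^{2}}\lesssim \|g_{2}\|_{L^{3,\infty}}\|\mathcal F(w)\|_{L^{6,2}}$. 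Summing the two contributions yields~\eqref{eq:27}.

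For~\eqref{eq:28}, since $\mathds 1_{|k|\le\Lambda}\le 1$ pointwise, it is enough to bound $\|\mathcal F(u_{1}u_{2})\|_{L^{\infty}\cap L^{6,2}}$ uniformly in $\Lambda\in(0,\infty]$. The $L^{\infty}$-part follows from $\|\mathcal F(u_{1}u_{2})\|_{L^{\infty}}\le \|u_{1}u_{2}\|_{L^{1}}\le \|u_{1}\|_{L^{2}}\|u_{2}\|_{L^{2}}$. For the $L^{6,2}$-part, I plan to use the Lorentz-refined Hausdorff--Young inequality $\mathcal F:L^{6/5,2}\to L^{6,2}$, obtained by real interpolation at $\theta=1/3$ between the endpoints $\mathcal F:L^{1}\to L^{\infty}$ and $\mathcal F:L^{2}\to L^{2}$. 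It then remains to bound $\|u_{1}u_{2}\|_{L^{6/5,2}}$. The continuous embedding $L^{6/5}=L^{6/5,6/5}\hookrightarrow L^{6/5,2}$ (valid since $6/5\le 2$) reduces this to the classical H\"older inequality $\|u_{1}u_{2}\|_{L^{6/5}}\le \|u_{1}\|_{L^{2}}\|u_{2}\|_{L^{3}}$, and the interpolation $\|u_{2}\|_{L^{3}}\le \|u_{2}\|_{L^{2}}^{1/2}\|u_{2}\|_{L^{6}}^{1/2}$ combined with Sobolev embedding yields $\|u_{2}\|_{L^{3}}\lesssim \|u_{2}\|_{H^{1}}$, which concludes.

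I do not expect any serious obstacle: the proof is essentially a matter of correctly matching both the $p$- and $q$-indices when applying H\"older's inequality \eqref{eq:weak_Holder}. The one step that might deserve a brief justification in the final write-up is the endpoint $\mathcal F:L^{6/5,2}\to L^{6,2}$, which is not in the explicit toolbox \eqref{eq:weak_Holder}--\eqref{eq:weak_Young} displayed in the preliminaries but is a standard consequence of the real-interpolation method applied to the elementary endpoints of Hausdorff--Young.
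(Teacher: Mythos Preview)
Your proof of~\eqref{eq:27} is essentially identical to the paper's: both pass to the Fourier side, split $\chi/|k|$ into its $L^{2}$ and $L^{3,\infty}$ parts, and pair each piece with the corresponding component of $\|\mathcal F w\|_{L^{\infty}\cap L^{6,2}}$ via H\"older in Lorentz spaces.

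For~\eqref{eq:28} your argument is correct but follows a different route from the paper. The paper stays on the Fourier side throughout: it writes $\mathcal F(u_{1}u_{2})$ (up to a constant) as the convolution $\mathcal F(u_{1})*\mathcal F(u_{2})$, applies Young's inequality in Lorentz spaces~\eqref{eq:weak_Young} with exponents $L^{2,2}\times L^{3/2,\infty}\to L^{6,2}$, and then estimates $\|\mathcal F(u_{2})\|_{L^{3/2,\infty}}$ by H\"older~\eqref{eq:weak_Holder} against $|k|^{-1/2}\in L^{6,\infty}$ to land on $\|u_{2}\|_{\dot H^{1/2}}\le\|u_{2}\|_{H^{1}}$. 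You instead stay on the physical side, invoke the Lorentz-refined Hausdorff--Young endpoint $\mathcal F:L^{6/5,2}\to L^{6,2}$, and then use $L^{6/5}\hookrightarrow L^{6/5,2}$ together with ordinary H\"older and the Sobolev embedding $H^{1}\hookrightarrow L^{3}$. Both approaches are sound; the paper's has the minor advantage of staying strictly within the toolbox~\eqref{eq:weak_Holder}--\eqref{eq:weak_Young} displayed in the preliminaries, whereas yours imports one additional (standard) interpolation fact. Your route is arguably more direct and avoids the somewhat artificial splitting $|k|^{-1/2}\cdot|k|^{1/2}$.
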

\begin{proof}
With a decomposition $\chi=\chi_1+\chi_2$ where~$\chi_1/|k|$ in~$L^2$ and~$\chi_2/|k|$ in~$L^{3,\infty}$, H\"older's inequality gives
\begin{align*}
\|\hat\chi_1 * w\|_{\dot{H}^{-1}} 
\lesssim \Big\|\frac{\chi_1}{|k|}\Big\|_{L^2} \|\mathcal{F}w\|_{L^{\infty}}   \,.
\end{align*}
Likewise, H\"older's inequality in Lorentz spaces, see~\eqref{eq:weak_Holder}, yields
\begin{align*}
\|\hat\chi_2 * w\|_{\dot{H}^{-1}} 
\lesssim \Big\|{\frac{\chi_2}{\abs{k}}\, \FF w }\Big\|_{L^2} 
\leq \Big\|{\frac{\chi_2}{\abs{k}}}\Big\|_{L^{3,\infty}}\norm{\FF w}_{L^{6,2}} \,.
\end{align*}
This proves \eqref{eq:27}.

Now, by the continuity of the Fourier transform from $L^1$ to $L^\infty$ and H\"older's inequality,
\begin{align*}
\|\mathds{1}_{|k|\leq \Lambda} \mathcal{F}(u_1u_2)\|_{L^\infty}
\leq 
\|u_1u_2\|_{L^1} 
\lesssim \|u_1\|_{L^2} \|u_2\|_{L^{2}}   \,.
\end{align*}
Then Young's inequality in Lorentz spaces \eqref{eq:weak_Young} yields
\begin{align*}
\norm{\mathds{1}_{|k|\leq \Lambda} \FF(u_1 u_2)}_{L^{6,2}} =
\norm{\FF(u_1)*\FF(u_2)}_{L^{6,2}}
\lesssim \norm{\FF(u_1)}_{L^{2,2}}\norm{\FF(u_2)}_{L^{3/2,\infty}}\,.
\end{align*}
Using the fact that $L^{2,2}=L^2$ and H\"older's inequality in Lorentz spaces~\eqref{eq:weak_Holder} yields
\begin{align*}
 \norm{\FF(u_1)}_{L^{2,2}}\norm{\FF(u_2)}_{L^{3/2,\infty}}
\lesssim \norm{u_1}_{L^2} \, \big\|\abs{k}^{-1/2}\big\|_{L^{6,\infty}}  \, \big\|\abs{k}^{1/2}\FF(u_2)\big\|_{L^{2,\infty}} \,.
\end{align*}
Since $L^{2,2}=L^2$ is  continuously embedded in $L^{2,\infty}$, and since $\|u_2\|_{\dot{H}^{1/2}}\leq \|u_2\|_{H^{1}}$ this yields \eqref{eq:28}.
\end{proof}

Recall that $\mathcal{Q}_V^*$ stands for the topological dual of $\mathcal{Q}_V$ (see \eqref{eq:QV_intro}) and that the space $\mathcal{A}$ has been defined in \eqref{eq:def_space_A}.

\begin{lemma}\label{lm:new}
Suppose that $V$ satisfies Hypothesis~\ref{condVPauliFierz} and that $\chi$ satisfies Hypothesis~\ref{condChi}. There exists a universal constant $C>0$ such that, for all $\vec{A}$ in $\mathcal{A}$ and $u$ in $H^1$, 
\begin{align*}
&\|(-i\vec{\nabla} u)\cdot(\hat{\chi}*\vec{A})\|_{\mathcal{Q}_V^*}\leq C \left\|\frac{\chi}{\abs{k}}\right\|_{L^2+L^{3,\infty}}\|\vec{A}\|_{\dot{H}^1}\|u\|_{H^1}^{1/2}\|u\|_{L^2}^{1/2},\\
&\| \hat{\chi}*\vec{\sigma} \cdot (\vec{\nabla}\wedge\vec{A})u\|_{\mathcal{Q}_V^*} \leq C \left\|\frac{\chi}{\abs{k}}\right\|_{L^2+L^{3,\infty}}\|\vec{A}\|_{\dot{H}^1}\|u\|_{H^1}^{1/2}\|u\|_{L^2}^{1/2}, \\
&\|(\hat{\chi}*\vec{A})^2 u\|_{\mathcal{Q}_V^*}  \le C  \left\|\frac{\chi}{\abs{k}}\right\|_{L^2+L^{3,\infty}}^2 \|\vec{A}\|^2_{\dot{H}^1} \|u\|_{H^1}^{1/2}\|u\|_{L^2}^{1/2}.
\end{align*}
\end{lemma}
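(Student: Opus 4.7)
The plan is to prove each bound by duality, writing $\norm{T}_{\mathcal{Q}_V^*}=\sup\{\abs{\langle\varphi,T\rangle}:\varphi\in\mathcal{Q}_V,\,\norm{\varphi}_{\mathcal{Q}_V}\le 1\}$, and repeatedly exploiting $\norm{\varphi}_{L^2}\le\norm{\varphi}_{H^1}\le\norm{\varphi}_{\mathcal{Q}_V}$. The main inputs will be Lemma~\ref{lem:estimateChiAU} (both~\eqref{eq:26} and the $H^1$-version obtained from~\eqref{eq:25} by interpolating $\norm{u}_{\dot{H}^{1/2}}\le \norm{u}_{L^2}^{1/2}\norm{u}_{H^1}^{1/2}$) and, for the delicate second bound, Lemma~\ref{lem:estimateChikFu1u2}. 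A crucial ingredient is the divergence-free condition $\vec{\nabla}\cdot\vec{A}=0$, which propagates on the Fourier side to $\vec{\nabla}\cdot(\hat\chi*\vec{A})=0$.

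For the first estimate, I would test against $\varphi\in\mathcal{Q}_V$ and integrate by parts, using $\vec{\nabla}\cdot(\hat\chi*\vec{A})=0$, to get
\begin{equation*}
\langle\varphi,(-i\vec{\nabla}u)\cdot(\hat\chi*\vec{A})\rangle = i\int (\vec{\nabla}\bar\varphi)\cdot(\hat\chi*\vec{A})\,u\,\diff x.
\end{equation*}
Cauchy--Schwarz, the $H^1$-version of~\eqref{eq:26} applied to $(\hat\chi*\vec{A})u$, and $\norm{\vec{\nabla}\varphi}_{L^2}\le\norm{\varphi}_{\mathcal{Q}_V}$ yield the claim. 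The third estimate is similarly direct: Cauchy--Schwarz gives
\begin{equation*}
\abs{\langle\varphi,(\hat\chi*\vec{A})^2u\rangle}\le\sum_{j=1}^3\norm{(\hat\chi*A_j)\varphi}_{L^2}\norm{(\hat\chi*A_j)u}_{L^2},
\end{equation*}
and applying~\eqref{eq:26} to the $\varphi$-factor and its $H^1$-version to the $u$-factor produces the required bound, the $\varphi$-side being absorbed into $\norm{\varphi}_{L^2}^{1/2}\norm{\varphi}_{\mathcal{Q}_V}^{1/2}\le\norm{\varphi}_{\mathcal{Q}_V}$.

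The second estimate is the most subtle and will be the main obstacle. A direct integration by parts based on $\vec{\sigma}\cdot(\vec{\nabla}\wedge\vec{A})=\vec{\nabla}\cdot(\vec{A}\wedge\vec{\sigma})$ generates a term in which $\vec{\nabla}u$ is paired with $\varphi\cdot(\hat\chi*\vec{A})$, forcing $\norm{u}_{H^1}$ to appear alone and spoiling the target factorization $\norm{u}_{L^2}^{1/2}\norm{u}_{H^1}^{1/2}$. The plan is to use Parseval instead. Setting $\psi_\ell(x):=\langle\varphi(x),\sigma_\ell u(x)\rangle_{\mathbb{C}^2}$, a scalar function, the convention~\eqref{eq:convfourier} yields
\begin{equation*}
\int\psi_\ell(\hat\chi*B_\ell)\,\diff x=(2\pi)^{-3}\int\chi(k)\,\bar{\FF}\psi_\ell(k)\,\FF B_\ell(k)\,\diff k,
\end{equation*}
and the identity $\FF B_\ell(k)=i(k\wedge\FF\vec{A}(k))_\ell$ gives $\abs{\FF B_\ell(k)}\le\abs{k}\abs{\FF\vec{A}(k)}$. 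This single factor $\abs{k}$ combines with $\FF\vec{A}$ to produce $\abs{k}\FF\vec{A}\in L^2$ (of norm $\sim\norm{\vec{A}}_{\dot{H}^1}$), while the residual $\abs{\chi}/\abs{k}\in L^2+L^{3,\infty}$ is handled by splitting $\chi=\chi_1+\chi_2$ as in Hypothesis~\ref{condChi}. For the $\chi_1$ part, standard H\"older with $\norm{\bar{\FF}\psi_\ell}_{L^\infty}\le\norm{\psi_\ell}_{L^1}\le\norm{\varphi}_{L^2}\norm{u}_{L^2}$ suffices. For the $\chi_2$ part, Lorentz H\"older with $1/3+1/6+1/2=1$ reduces matters to bounding $\norm{\bar{\FF}\psi_\ell}_{L^{6,2}}$, and applying Lemma~\ref{lem:estimateChikFu1u2}~\eqref{eq:28} to the bilinear products $\bar\varphi_a u_b$ entering $\psi_\ell$ yields $\norm{\bar{\FF}\psi_\ell}_{L^{6,2}}\le C\norm{u}_{L^2}\norm{\varphi}_{H^1}$. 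In both cases the $u$-factor is $\norm{u}_{L^2}\le\norm{u}_{L^2}^{1/2}\norm{u}_{H^1}^{1/2}$, while $\norm{\varphi}_{L^2},\norm{\varphi}_{H^1}\le\norm{\varphi}_{\mathcal{Q}_V}$; taking the supremum concludes the second estimate.
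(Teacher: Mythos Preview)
Your argument is correct. For the first and third inequalities your duality-plus-Lemma~\ref{lem:estimateChiAU} approach coincides with the paper's.

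For the second inequality you take a genuinely different route. The paper integrates by parts to obtain the identity
\[
\int\overline{v}\,\big(\hat{\chi}*\vec{\sigma}\cdot(\vec\nabla\wedge\vec{A})\big)u
= -\big\langle(\vec\nabla\wedge\vec\sigma)v,\,(\hat\chi*\vec{A})u\big\rangle_{L^2},
\]
and then argues exactly as for the first bound. In fact that integration by parts produces \emph{two} terms,
$\langle(\vec\nabla\wedge\vec\sigma)v,(\hat\chi*\vec{A})u\rangle$ and $\langle(\hat\chi*\vec{A})v,(\vec\nabla\wedge\vec\sigma)u\rangle$ (compare the diagonal case in the proof of Theorem~\ref{thm:Pauli-Fierz}, where the same manipulation for $\mathcal{E}_5$ yields $2\Real\langle\cdots\rangle$). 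The second term, bounded via Lemma~\ref{lem:estimateChiAU} applied to $(\hat\chi*\vec{A})v$, contributes $\|u\|_{H^1}$ rather than $\|u\|_{L^2}^{1/2}\|u\|_{H^1}^{1/2}$ --- precisely the obstruction you anticipated. Your Fourier-side argument via Lemma~\ref{lem:estimateChikFu1u2}~\eqref{eq:28} (with $u_1=u$, $u_2=\bar\varphi$) sidesteps this and delivers the sharper factor $\|u\|_{L^2}\le\|u\|_{L^2}^{1/2}\|u\|_{H^1}^{1/2}$ as stated. The paper's route, carried out with both terms, would still give a perfectly usable bound with $\|u\|_{H^1}$ in place of $\|u\|_{L^2}^{1/2}\|u\|_{H^1}^{1/2}$, and since every application in the paper has $u\in\mathcal{U}$ (so $\|u\|_{L^2}=1$) the distinction is immaterial downstream; but your version matches the lemma's statement exactly and is cleaner.
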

\begin{proof}
By duality, using that $\vec{\nabla}\cdot\vec{A}=0$, we have
\begin{align*}
\|(-i\vec{\nabla} u)\cdot(\hat{\chi}*\vec{A})\|_{\mathcal{Q}_V^*} &= \underset{\|v\|_{\mathcal{Q}_V}=1}{\sup}\abs{\int \overline{v(x)}[(-i\vec{\nabla} u(x))\cdot(\hat{\chi}*\vec{A})(x)]\diff x} \\
& = \underset{\|v\|_{\mathcal{Q}_V}=1}{\sup}\abs{\int \overline{-i\vec{\nabla} v(x)}[ u(x)\cdot(\hat{\chi}*\vec{A})(x)]\diff x}\\
& \leq \underset{\|v\|_{\mathcal{Q}_V}=1}{\sup}\|\vec{\nabla} v\|_{L^2}\|(\hat{\chi}*\vec{A})u\|_{L^2}\\
&\leq \underset{\|v\|_{\mathcal{Q}_V}=1}{\sup}\|v\|_{\mathcal{Q}_V}\|(\hat{\chi}*\vec{A})u\|_{L^2}\leq \|(\hat{\chi}*\vec{A})u\|_{L^2} \,.
\end{align*}
This last quantity is estimated thanks to Lemma \ref{lem:estimateChiAU}:
\begin{align*}
\|(\hat{\chi}*\vec{A})u\|_{L^2} & \lesssim \left\|\frac{\chi}{\abs{k}}\right\|_{L^2+L^{3,\infty}}\|\vec{A}\|_{\dot{H}^1} \|u\|_{H^1}^{1/2}\|u\|_{L^2}^{1/2}. 
\end{align*}

The estimate of $\| \hat{\chi}*\vec{\sigma} \cdot (\vec{\nabla}\wedge\vec{A})u\|_{\mathcal{Q}_V^*}$ is analogous, using that
\begin{equation*}
\int\overline{v(x)} \big(\hat{\chi}*\vec{\sigma} \cdot (\vec{\nabla}\wedge\vec{A})u\big)(x)\mathrm{d}x
= - \big\langle ( \vec{\nabla} \wedge \vec{\sigma})v\,, (\hat\chi * \vec{A}) u \big\rangle_{L^2}.
\end{equation*}

Similarly, by duality, H\"{o}lder's inequality and Lemma~\ref{lem:estimateChiAU},
\begin{multline*}
\|(\hat{\chi}*\vec{A})^2 u\|_{\mathcal{Q}_V^*} = \underset{\|v\|_{\mathcal{Q}_V}=1}{\sup}\abs{\int \overline{v}(\hat{\chi}*\vec{A})^2  u }  \\
\leq  \|(\hat{\chi}*\vec{A})v\|_{L^2}  \|(\hat{\chi}*\vec{A})u\|_{L^2}
 \lesssim  \left\|\frac{\chi}{\abs{k}}\right\|_{L^2+L^{3,\infty}}^2 \|\vec{A}\|^2_{\dot{H}^1} \|u\|_{H^1}^{1/2}\|u\|_{L^2}^{1/2}.
\end{multline*}
This concludes the proof of the lemma.
\end{proof}

\section{Proofs of the main results}\label{sec:Pauli-Fierz}

In this section we prove our main results stated in Section \ref{subsec:main}.
In Section~\ref{sec:MaxwellSchroedinger}, we show that minimizing the Pauli-Fierz energy over coherent states is equivalent to minimizing the Maxwell--Schr\"odinger energy functional over its natural definition domain.
The existence of a minimizer stated in Theorem \ref{thm:Pauli-Fierz} is proved in Section~\ref{sec:PF-existence}, using coercivity and lower semicontinuity arguments. In Section~\ref{sec:PF-uniqueness} we study the set of minimizers of the Maxwell--Schr\"odinger energy functional for small coupling constants.  In particular, the Euler-Lagrange equations leads to useful estimates for the minimizers, which in turn allows us to obtain in Section~\ref{sec:PF-expansion} the second-order asymptotic expansions of the ground state energy at small coupling stated in Proposition \ref{prop:Asymtotic-Expansion-Ground-State-Energy}.
Finally, we prove the convergence of the quasi-classical ground state energy  in the ultraviolet limit (Proposition \ref{prop:conv-GS-en-PF_intro}) in Section~\ref{sec:UV-PF}.

\subsection{Reduction to the Maxwell--Schr\"odinger energy functional\label{sec:MaxwellSchroedinger}}

We first justify the derivation of the Maxwell--Schr\"odinger energy functional appearing in \eqref{eq:defE_PF}. To this end we compute the energy of product state $u\otimes \Psi_{\vec{f}}$ in the standard model of non-relativistic QED. Recall that~$u$ is in~$\mathcal{U}$ (see \eqref{eq:defU_intro}) and $\vec{f}$ is in~$L^{2}_\perp(\mathbb{R}^{3};\mathbb{C}^{3})\cap\mathcal{Z}$ (see \eqref{eq:defZ_PF}). 

We introduce a direct sum decomposition $\ZZ = \ZZ^+\oplus \ZZ^-$ where
\begin{equation*}
\ZZ^+:=\left\{\vec{f}\in\ZZ,\;\vec{f}(-k) = \overline{\vec{f}(k)}\quad\forall k\in\R^3 \right\},\quad\ZZ^-:=\left\{\vec{f}\in\ZZ,\;\vec{f}(-k) = -\overline{\vec{f}(k)} \quad\forall k\in\R^3\right\}.
\end{equation*}
Note that any $\vec{f}$ in~$\ZZ$ decomposes as
$\vec{f} = \vec{f}_++\vec{f}_-$
with
\begin{equation*}
\vec{f}_+(k) :=  \frac{\vec{f}(k)+\overline{\vec{f}(-k)}}{2} \in \ZZ^+,\quad \vec{f}_-(k) :=  \frac{\vec{f}(k)-\overline{\vec{f}(-k)}}{2}\in\ZZ^-.
\end{equation*}
We suppose here that $\chi/\sqrt{|k|}$ and $\chi/|k|$ belong to~$L^2(\mathbb{R}^3)$ in order for the Hamiltonian~$\mathbb{H}$ to be well-defined (see Proposition \ref{prop:self-adj}).  These assumptions will however subsequently be relaxed in our study of the Maxwell--Schr\"odinger engergy functional.
In this section we drop the index~$V$ for~$\mathcal{E}_V$ as the potential remains fixed throughout this section.

\begin{proposition}
Let $\chi:\mathbb{R}^3\to\mathbb{R}$ be such that $\chi(-k)=\chi(k)$ for all~$k$ in~$\mathbb{R}^3$ and both~$\chi/\sqrt{|k|}$ and $\chi/|k|$ belong to~$L^2(\mathbb{R}^3)$. Let~$u$ in~$\mathcal{U}$ and~$\Psi_{\vec{f}}$ in~$\mathcal{H}_{\mathrm{f}}$ be a coherent state of parameter~$\vec{f}$ in~$L^{2}_\perp(\mathbb{R}^{3};\mathbb{C}^{3})\cap\ZZ.$ 
The energy of the state $u\otimes \Psi_{\vec{f}}$ satisfies
\begin{equation}\label{eq:comput_energy}
\Big\langle( u\otimes \Psi_{\vec{f}})\,, \,\mathbb{H} (u\otimes \Psi_{\vec{f}})\Big\rangle_{\mathcal{H}} = 2g^2\Big\| \frac{\chi(k)}{\sqrt{|k|}}\Big\|^2_{L^2}+\ps{\vec{f}_-}{|k|\vec{f}_-}_{L^2}+\E(u,\vec{A}_{\vec{f}}),
\end{equation}
where $\vec{A}_{\vec{f}}$ is given by \eqref{eq:def_Af} and ${\E}(u,\vec{A}_{\vec{f}})$ is given by \eqref{eq:comput_E(u,A)}. Moreover,
\begin{align}
&\underset{u\in\mathcal{U},\vec{f}\in L^2_\perp\cap\ZZ}{\inf}\ps{(u\otimes \Psi_{\vec{f}})\,}{\,\mathbb{H}(u\otimes \Psi_{\vec{f}})} =2g^2\Big\| \frac{\chi(k)}{\sqrt{|k|}}\Big\|^2_{L^2}+ \underset{u\in\mathcal{U},\vec{f}\in L^2_\perp\cap\ZZ^+}{\inf}\E(u,\vec{A}_{\vec{f}}). \label{eq:min_PF_Z+}
\end{align}
\end{proposition}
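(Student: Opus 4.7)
The plan is to reduce the Fock-space expectation in \eqref{eq:comput_energy} to a classical $c$-number expression via the Weyl displacement rule for coherent states, and then observe that the $\vec{f}_-$-component decouples nonnegatively, so it can be eliminated in the infimum. Write $\Psi_{\vec{f}} = W(\vec{f})\Omega$ with $W(\vec{f})$ the unitary Weyl operator. The conjugation identities $W(\vec{f})^{-1}a(h)W(\vec{f}) = a(h) + \langle h,\vec{f}\rangle$ and $W(\vec{f})^{-1}a^{*}(h)W(\vec{f}) = a^{*}(h) + \langle \vec{f},h\rangle$ shift each creation/annihilation operator by a $c$-number, so coherent state expectations reduce to vacuum expectations of a shifted Hamiltonian. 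Expanding $\mathbb{H}$ from \eqref{eq:defH} term by term: $\mathbb{H}_{\mathrm{f}}$ contributes $\langle \vec{f}, |k|\vec{f}\rangle$; $V$ contributes $\langle u, Vu\rangle$; the linear field $\Phi(\vec\nabla\wedge\vec{m}_x)$ produces its classical value; and the quadratic kinetic term $(-i\vec\nabla - \vec{\mathbb{A}}(\vec{m}_x))^2$ expands, via Wick contraction, into the square of the classical shift of $\vec{\mathbb{A}}(\vec{m}_x)$ plus the vacuum piece $\|\vec{m}_x\|_{L^2_{k,\tau}}^2$. Summing over the two transverse polarisations (using $\sum_{\tau=1,2}|\vec\varepsilon_\tau(k)|^2 = 2$) yields the expected normal-ordering constant $2g^2\|\chi/\sqrt{|k|}\|^2_{L^2}$.

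Next I identify the classical shift with a Maxwell--Schr\"odinger quantity. A direct calculation using $\chi(-k) = \chi(k)$, the polarisation expansion $\vec{f} = \sum_{\tau}f_\tau\vec\varepsilon_\tau$, and the convolution convention \eqref{eq:convfourier} gives
\[
2\,\Real\langle \vec{m}_x, \vec{f}\rangle_{L^2_{k,\tau}} \;=\; g\,(\hat\chi * \vec{A}_{\vec{f}})(x),
\]
with $\vec{A}_{\vec{f}}$ as in \eqref{eq:def_Af}; only the even part $\vec{f}_+$ enters, since the odd part is killed by the real-part operation. The parity relations $\vec{f}_\pm(-k) = \pm\overline{\vec{f}_\pm(k)}$ then show that $\langle \vec{f}_+, |k|\vec{f}_-\rangle$ is purely imaginary, hence
\[
\langle \vec{f}, |k|\vec{f}\rangle = \langle \vec{f}_+, |k|\vec{f}_+\rangle + \langle \vec{f}_-, |k|\vec{f}_-\rangle,
\]
and Plancherel's identity in the present Fourier normalisation gives $\langle \vec{f}_+, |k|\vec{f}_+\rangle = (32\pi^3)^{-1}\|\vec{A}_{\vec{f}}\|^2_{\dot{H}^1}$. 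Applying the Pauli identity $(\vec\sigma\cdot\vec{v})^2 = |\vec{v}|^2 - g\,\vec\sigma\cdot(\hat\chi*\vec{B})$ with $\vec{v} = -i\vec\nabla - g\hat\chi*\vec{A}_{\vec{f}}$ to pass between \eqref{eq:comput_E(u,A)} and \eqref{eq:comput_E(u,A_intro} then assembles all terms into \eqref{eq:comput_energy}.

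For the infimum identity \eqref{eq:min_PF_Z+}, observe that $\vec{A}_{\vec{f}}$ depends only on $\vec{f}_+$, hence $\mathcal{E}(u,\vec{A}_{\vec{f}})$ depends only on $(u,\vec{f}_+)$, while the only other $\vec{f}$-dependent contribution $\langle \vec{f}_-, |k|\vec{f}_-\rangle$ is nonnegative and vanishes exactly on $\mathcal{Z}^+$. The transversality constraint $k\cdot\vec{f}(k) = 0$ is preserved by the projections $\vec{f}\mapsto\vec{f}_\pm$ (combining transversality at $k$ and at $-k$), so every $\vec{f}_+\in L^2_\perp\cap\mathcal{Z}^+$ arises from some admissible $\vec{f}\in L^2_\perp\cap\mathcal{Z}$ (take $\vec{f}=\vec{f}_+$). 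The infimum over $L^2_\perp\cap\mathcal{Z}$ is therefore achieved by setting $\vec{f}_-=0$, yielding \eqref{eq:min_PF_Z+}. The main technical obstacle throughout is the careful Wick contraction and polarisation bookkeeping that produces the exact normal-ordering constant $2g^2\|\chi/\sqrt{|k|}\|^2_{L^2}$; once this is in hand, the remaining identifications are algebraic consequences of Plancherel's identity and the parity decomposition $\vec{f}=\vec{f}_++\vec{f}_-$.
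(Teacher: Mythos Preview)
Your argument is correct and essentially the same as the paper's: you compute the coherent-state expectation via Weyl conjugation (equivalently, the identities \eqref{eq:coher-app}), identify the classical field with $\hat\chi*\vec{A}_{\vec f}$ using the symmetry $\chi(-k)=\chi(k)$, split $\langle\vec f,|k|\vec f\rangle$ by parity and Parseval, and eliminate $\vec f_-$ by nonnegativity. The only cosmetic difference is that the paper invokes the moment formulas for coherent states directly rather than phrasing the computation through $W(\vec f)^{-1}\mathbb{H}W(\vec f)$.
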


\begin{proof}
Using the identities \eqref{eq:coher-app} recalled in Appendix \ref{app:Fock}, we can compute the Pauli-Fierz energy of the state $u\otimes \Psi_{\vec{f}}$, which gives
\begin{multline*}
\big\langle u\otimes \Psi_{\vec{f}}\, , \,\mathbb{H}\; (u\otimes \Psi_{\vec{f}})\big\rangle = \langle u , H_Vu\rangle_{L^2} + 2g^2\norm{\vec{m}}^2_{L^2_k}-4g\Real\Big\langle u,-i\vec{\nabla} u\big\langle \vec{f},\vec{m}(x,.)\big\rangle_{L^2_k}\Big\rangle_{L^2_x}\\
+4g^2\Big\langle u, \Big( \Real\big\langle\vec{f},\vec{m}(x,.)\big\rangle_{L^2_k}\Big)^2u\Big\rangle_{L^2_x}
-g\langle u, \vec{\sigma}\cdot \vec{\nabla}_x \wedge 2\Real\langle \vec{f},\vec{m}(x,\cdot)\rangle_{L^2_k} \, u\rangle
+\big\langle \vec{f},\abs{k}\vec{f}\big\rangle_{L^2_k},
\end{multline*}
 where $\vec{m}=\sum_\tau\vec{m}_\tau$ and
\begin{equation*}
\vec{m}_\tau(x,k) := \chi(k)\abs{k}^{-1/2}e^{-ik\cdot x}\vec{\varepsilon}_\tau(k)\,.
\end{equation*}
First, we can use the properties of the Fourier transform to obtain 
\begin{align*}
\big\langle \vec{f},\vec{m}(x,.)\big\rangle_{L^2_k} &= \sum_\tau\int\overline{\vec{f}_\tau(k)}\chi(k)\abs{k}^{-1/2}e^{-ikx}\vec{\varepsilon}_\tau(k)\diff k\\
& = \FF\left(\overline{\vec{f}(k)}\abs{k}^{-1/2}\chi(k)\right)(x)\\
& = \hat{\chi}*\FF(\overline{\vec{f}(k)}\abs{k}^{-1/2}) \, (x),
\end{align*}
which means that, with the notation $\vec{A}_{\vec{f}}$ introduced in~\eqref{eq:def_Af}, and using that $\hat{\chi}$ is real valued:
\begin{equation*}
2\Real\big\langle \vec{f},\vec{m}(x,.)\big\rangle_{L^2_k} = \hat{\chi}*\vec{A}_{\vec{f}}(x) \, .
\end{equation*}
Integrating by parts then gives
\begin{align*}
2&\Real\Big\langle u,-i\vec{\nabla} u\big\langle\vec{f},\vec{m}(x,.)\big\rangle_{L^2_k}\Big\rangle_{L^2_x} \\
&= \Big\langle u,-i\vec{\nabla} u\big\langle \vec{f},\vec{m}(x,.)\big\rangle_{L^2_k}\Big\rangle_{L^2_x}+\Big\langle -i\vec{\nabla} u\big\langle \vec{f},\vec{m}(x,.)\big\rangle_{L^2_k},u\Big\rangle_{L^2_x}\\
& = \int \overline{-i\vec{\nabla} u(x)}u(x)\big\langle \vec{f},\vec{m}(x,.)\big\rangle_{L^2_k}\diff x+\int \overline{-i\vec{\nabla} u(x)}u(x)\overline{\big\langle \vec{f},\vec{m}(x,.)}\big\rangle_{L^2_k} \diff x\\
& = \int \overline{-i\vec{\nabla} u(x)}u(x)2\Real\big\langle \vec{f},\vec{m}(x,.)\big\rangle_{L^2_k}\diff x\\
& = \ps{-i\vec{\nabla} u}{(\hat{\chi}*\vec{A}_{\vec{f}})u}_{L^2}.
\end{align*}
Now, we compute the scalar product 
\begin{equation*}
\big\langle \vec{f},\abs{k}\vec{f}\big\rangle_{L^2} = \big\langle\vec{f}_+,\abs{k}\vec{f}_+\big\rangle_{L^2}+\big\langle\vec{f}_-,\abs{k}\vec{f}_-\big\rangle_{L^2}+2\Real\big\langle\vec{f}_+,\abs{k}\vec{f}_-\big\rangle_{L^2}.
\end{equation*}
Using a change of variables and the definitions of $f_+$ and $f_-$ yields
\begin{align*}
\big\langle\vec{f}_+,\abs{k}\vec{f}_-\big\rangle &= \int\overline{\vec{f}_+(k)}\abs{k}\vec{f}_-(k)\diff k = \int\overline{\vec{f}_+(-k)}\,\abs{-k}\,\vec{f}_-(-k)\diff k = -\int \vec{f}_+(k)\abs{k}\overline{\vec{f}_-(k)}\diff k.
\end{align*}
This shows that $2\Real\big\langle \vec{f}_+,\abs{k}\vec{f}_-\big\rangle = 0$. Then, 
applying the inverse Fourier transform to~\eqref{eq:def_Af} yields
\begin{equation*}
\overline{\vec{f}_+(k)}=\frac{1}{2}\abs{k}^{1/2}\FF^{-1}(\vec{A}_{\vec{f}})\,.
\end{equation*}
Finally, using Parseval's equality yields 
\begin{equation*}
\big\langle\vec{f}_+,\abs{k}\vec{f}_+\big\rangle_{L^2} = \frac{1}{32\pi^3}\big\langle\FF^{-1}(\vec{A}_{\vec{f}}),\abs{k}^{2}\FF^{-1}(\vec{A}_{\vec{f}})\big\rangle_{L^2}=\frac{1}{32\pi^3}\big\langle\vec{A}_{\vec{f}},-\Delta \vec{A}_{\vec{f}}\big\rangle_{L^2}=\frac{1}{32\pi^3}\big\|\vec{A}_{\vec{f}}\big\|^2_{\dot{H}^1}.
\end{equation*}
This allows us to obtain \eqref{eq:comput_energy}.

Now, since the term $\langle\vec{f}_-,\abs{k}\vec{f}_-\rangle$ is non-negative, we can write
\begin{align*}
\inf_{u\in\mathcal{U},\vec{f}\in L^2_\perp\cap\ZZ} &\ps{(u\otimes \Psi_{\vec{f}})}{\mathbb{H}(u\otimes \Psi_{\vec{f}})}
-2g^2\norm{ |k|^{-1/2}\chi(k)}^2_{L^2}\\
&=\underset{u\in\mathcal{U},\vec{f}_+\in L^2_\perp\cap\ZZ^+}{\inf}\;\underset{\vec{f}_-\in L^2_\perp\cap\ZZ^-}{\inf}\left(\E(u,\vec{A}_{\vec{f_+}+\vec{f_-}})+\ps{\vec{f}_-}{\abs{k}\vec{f}_-}_{L^2}\right)\\
& = \underset{u\in\mathcal{U},\vec{f}_+\in L^2_\perp\cap\ZZ^+}{\inf}\E(u,\vec{A}_{\vec{f_+}}),
\end{align*}
which establishes \eqref{eq:min_PF_Z+}.
\end{proof}
In the sequel we  focus on the minimization of the energy functional~$\E.$ By \eqref{eq:min_PF_Z+}, we can restrict the minimization to $\vec{f}\in\ZZ^+$.

In order for the coherent state $\Psi_{\vec{f}}$ to be well-defined, we assumed in the previous proof that $\vec{f}\in L^{2}_\perp(\mathbb{R}^{3};\mathbb{C}^{3})$. The further condition $\vec{f}\in\mathcal{Z}^+$ ensures that the term $\langle \vec{f},\abs{k}\vec{f}\rangle$ is finite. We will see below (see Lemma \ref{lem:coercivity-pauli-fierz}) that, in order for $\mathcal{E}(u,\vec{A}_{\vec{f}})$ to be well-defined, it suffices in fact to assume that $u\in\mathcal{U}$ and $\vec{f}\in\mathcal{Z}^+$. (By \eqref{eq:def_Af}, the latter condition is equivalent to~$\vec{A}_{\vec{f}}\in \dot{H}^{1}$, while $\vec{f}_+\in L^2$ is equivalent to $\vec{A}_{\vec{f}}\in\dot{H}^{1/2}$). 
We therefore study $\E$ on the energy space $\mathcal{U\times A}$ (where $\mathcal{A}$ is defined in \eqref{eq:def_space_A}), the norm on $\mathcal{U\times A}$ being defined by $$\|(u,\vec A)\|^2_{\mathcal{U\times A}} = \|u\|^2_{H^1} + \langle u,V_+ u\rangle_{L^2} + \|\vec A\|^2_{\dot H^1}.$$

In the remainder of this section, we establish Proposition \ref{prop:equiv_PF}, namely, that for any minimizer $(u_{\mathrm{gs}},\vec{A}_{\mathrm{gs}})$ in~$\mathcal{U}\times\mathcal{A}$ of the Maxwell--Schr\"odinger energy functional \eqref{eq:comput_E(u,A)}, there exists~$\vec{f}_{\mathrm{gs}}$ in~$L^{2}_\perp(\mathbb{R}^{3};\mathbb{C}^{3})\cap\ZZ$ such that~$\vec{A}_{\mathrm{gs}}=\vec{A}_{\vec{f}_\mathrm{gs}}$ as in \eqref{eq:def_Af}.

We begin with a lemma introducing the Euler-Lagrange equation satisfied by~$\vec{A}_{\mathrm{gs}}$ and the Pauli operator at a minimizer~$(u_\mathrm{gs}, \vec{A}_\mathrm{gs})$, which will often be useful in the sequel.
\begin{lemma}[Euler-Lagrange equation and Pauli operator associated to a minimizer]\label{lem:H_VAgs}
Suppose that the potential~$V$ satisfies Hypothesis~\ref{condVPauliFierz} and that~$\chi$ satisfies Hypothesis~\ref{condChi}.
If~$(u_\mathrm{gs}, \vec{A}_\mathrm{gs})$ is a minimizer of~$\mathcal{E}$ over~$\mathcal{U\times A}$, then
\begin{align}
\vec{A}_\mathrm{gs} &= 32\pi^3(-\Delta)^{-1}g{\hat{\chi}}*\Real
\big\langle (-i\vec{\nabla} +\vec{\nabla}\wedge\vec{\sigma} -g\hat{\chi}*\vec{A}_\mathrm{gs}) u_\mathrm{gs},u_{\mathrm{gs}}\big\rangle_{\mathbb{C}^2} , \label{eq:PF-Lagrange-A}
\end{align}
the operator
\begin{equation}\label{eq:def_H_VAgs}
H_{V,\vec{A}_\mathrm{gs}} := (-i\vec{\nabla}-g\hat{\chi} * \vec{A}_\mathrm{gs})^2 - g\hat{\chi}*\vec{\sigma} \cdot (\vec{\nabla}\wedge\vec{A}_{\mathrm{gs}})+ V+\frac{1}{32\pi^3}\|\vec{A}_\mathrm{gs}\|_{\dot{H}^1}^2 
\end{equation}
defines a self-adjoint operator, and $u_\mathrm{gs}$ is an eigenvector of $H_{V,\vec{A}_\mathrm{gs}}$ associated to the eigenvalue~$E_V$.
\end{lemma}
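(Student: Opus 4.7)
The plan is to obtain the conclusions in three steps: (a) show self-adjointness of $H_{V,\vec A_{\mathrm{gs}}}$ on the form domain $\mathcal{Q}_V$ via the KLMN theorem; (b) derive the eigenvalue equation for $u_{\mathrm{gs}}$ by constrained G\^ateaux differentiation of $\mathcal{E}_V$ in $u$; (c) derive the Biot--Savart-type identity \eqref{eq:PF-Lagrange-A} by G\^ateaux differentiation in $\vec A$ along divergence-free directions. Steps (b) and (c) are routine quadratic-expansion computations once (a) is in place, since each cross term is then integrable thanks to Lemmas \ref{lem:estimateChiAU} and \ref{lm:new}. The main obstacle is (a), which is the only place where the quantitative $\chi/|k|$ bounds in Lorentz spaces genuinely enter.

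For (a), expand using the Coulomb gauge $\vec\nabla\cdot\vec A_{\mathrm{gs}}=0$:
\begin{equation*}
H_{V,\vec A_{\mathrm{gs}}}-V-\tfrac{1}{32\pi^3}\|\vec A_{\mathrm{gs}}\|_{\dot H^1}^2=-\Delta-2g(\hat\chi*\vec A_{\mathrm{gs}})\cdot(-i\vec\nabla)+g^2(\hat\chi*\vec A_{\mathrm{gs}})^2-g\hat\chi*\vec\sigma\cdot(\vec\nabla\wedge\vec A_{\mathrm{gs}}).
\end{equation*}
Each of the three interaction terms is bounded from $\mathcal{Q}_V$ to $\mathcal{Q}_V^*$ by Lemma \ref{lm:new} with a norm controlled by $\|\vec A_{\mathrm{gs}}\|_{\dot H^1}\|u\|_{H^1}^{1/2}\|u\|_{L^2}^{1/2}$ (squared for the quadratic one). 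Young's inequality $ab\le\epsilon a^2+\epsilon^{-1}b^2/4$ then shows that the associated quadratic forms are infinitesimally form-bounded with respect to $H_{V_+}=-\Delta+V_+$; combining with the relative form-boundedness of $V_-$ with respect to $\sqrt{-\Delta}$ from Hypothesis \ref{condVPauliFierz} yields infinitesimal form-boundedness with respect to $H_V$. The KLMN theorem produces a semi-bounded self-adjoint $H_{V,\vec A_{\mathrm{gs}}}$ with form domain $\mathcal{Q}_V$; the additive constant $\tfrac{1}{32\pi^3}\|\vec A_{\mathrm{gs}}\|_{\dot H^1}^2$ is finite since $\vec A_{\mathrm{gs}}\in\mathcal A$ and leaves self-adjointness untouched.

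For (b), write $\mathcal{E}_V(u,\vec A_{\mathrm{gs}})=\langle u,(H_{V,\vec A_{\mathrm{gs}}}-\tfrac{1}{32\pi^3}\|\vec A_{\mathrm{gs}}\|_{\dot H^1}^2)u\rangle_{L^2}+\tfrac{1}{32\pi^3}\|\vec A_{\mathrm{gs}}\|_{\dot H^1}^2$ and differentiate with a Lagrange multiplier $\lambda$ attached to $\|u\|_{L^2}^2=1$; stationarity at $u_{\mathrm{gs}}$ gives $(H_{V,\vec A_{\mathrm{gs}}}-\tfrac{1}{32\pi^3}\|\vec A_{\mathrm{gs}}\|_{\dot H^1}^2)u_{\mathrm{gs}}=\lambda u_{\mathrm{gs}}$, and pairing with $u_{\mathrm{gs}}$ identifies $\lambda+\tfrac{1}{32\pi^3}\|\vec A_{\mathrm{gs}}\|_{\dot H^1}^2=\mathcal{E}_V(u_{\mathrm{gs}},\vec A_{\mathrm{gs}})=E_V$, so $H_{V,\vec A_{\mathrm{gs}}}u_{\mathrm{gs}}=E_V u_{\mathrm{gs}}$. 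For (c), pick $\delta\vec A\in\mathcal A$ and differentiate $t\mapsto\mathcal{E}_V(u_{\mathrm{gs}},\vec A_{\mathrm{gs}}+t\delta\vec A)$ at $t=0$: since $\chi$ is real and even, convolution by $\hat\chi$ is self-adjoint on $L^2$, so the kinetic cross term equals $-2g\int\delta\vec A\cdot\hat\chi*\mathrm{Re}\langle(-i\vec\nabla-g\hat\chi*\vec A_{\mathrm{gs}})u_{\mathrm{gs}},u_{\mathrm{gs}}\rangle_{\mathbb{C}^2}\,\mathrm{d}x$, and the spin term, after one integration by parts transferring the curl onto $\delta\vec A$ and using that the curl commutes with convolution, equals $-2g\int\delta\vec A\cdot\hat\chi*\mathrm{Re}\langle(\vec\nabla\wedge\vec\sigma)u_{\mathrm{gs}},u_{\mathrm{gs}}\rangle_{\mathbb{C}^2}\,\mathrm{d}x$; the Coulomb term gives $\tfrac{1}{16\pi^3}\langle-\Delta\vec A_{\mathrm{gs}},\delta\vec A\rangle_{L^2}$. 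Orthogonality for every divergence-free $\delta\vec A$ yields \eqref{eq:PF-Lagrange-A} up to the Leray projection; the projector drops out because the spin contribution on the right is a curl (hence divergence-free) and the orbital contribution has divergence $\hat\chi*\vec\nabla\cdot\mathrm{Re}\langle(-i\vec\nabla-g\hat\chi*\vec A_{\mathrm{gs}})u_{\mathrm{gs}},u_{\mathrm{gs}}\rangle_{\mathbb{C}^2}$, which vanishes by the continuity equation obtained from taking imaginary parts in $H_{V,\vec A_{\mathrm{gs}}}u_{\mathrm{gs}}=E_V u_{\mathrm{gs}}$ (step (b)).
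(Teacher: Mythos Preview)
Your argument is correct. The approach differs from the paper's in two notable ways, one in your favor and one in theirs.

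For part (b), the paper avoids Lagrange multipliers altogether: it observes that the self-adjoint operator $H_{V,\vec A_{\mathrm{gs}}}$ satisfies $H_{V,\vec A_{\mathrm{gs}}}\ge E_V$ as a quadratic form (since $\langle u, H_{V,\vec A_{\mathrm{gs}}} u\rangle = \mathcal{E}_V(u,\vec A_{\mathrm{gs}})\ge E_V$ for every $u\in\mathcal{U}$), and that $\langle u_{\mathrm{gs}},(H_{V,\vec A_{\mathrm{gs}}}-E_V)u_{\mathrm{gs}}\rangle=0$. Hence $(H_{V,\vec A_{\mathrm{gs}}}-E_V)^{1/2}u_{\mathrm{gs}}=0$, which immediately gives $u_{\mathrm{gs}}\in\mathcal{D}(H_{V,\vec A_{\mathrm{gs}}})$ and the eigenvalue equation. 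This bypasses the passage from the weak Euler--Lagrange equation in $\mathcal{Q}_V^*$ to the strong eigenvector statement that your Lagrange-multiplier route implicitly requires (though that passage is of course standard via the representation theorem for closed forms).

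For part (c), you are more careful than the paper: the paper simply writes down the Fr\'echet derivative and sets it to zero, without commenting on the fact that variations are constrained to $\mathcal{A}$ and hence a priori the equation only holds modulo a gradient. Your observation that the Leray projector drops out---because the spin current is a curl and the orbital current is divergence-free by the continuity equation derived from (b)---is the right justification, and it explains why your ordering (a), (b), (c) is cleaner than the paper's ordering (c) before (b). Regarding (a), the paper instead argues that $V_-$ is infinitesimally form-bounded with respect to the magnetic Pauli kinetic term $(\vec\sigma\cdot(-i\vec\nabla-g\hat\chi*\vec A_{\mathrm{gs}}))^2$, referring forward to \eqref{eq:V-_rel_form_bounded}; your route via Lemma~\ref{lm:new} and Young's inequality is equally valid and perhaps more direct.
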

\begin{proof}
At a minimizer, the Frechet derivative of $\mathcal{E}(u,\vec{A})$ with respect to~$\vec{A}$,
\begin{align*}
\partial_{\vec{A}}{\E}(u_\mathrm{gs},\vec{A}_\mathrm{gs}) 
=& -\frac{\Delta}{16\pi^3}\vec{A}_\mathrm{gs}-g\hat{\chi}*2\Real\langle -i\vec{\nabla} u_\mathrm{gs},u_\mathrm{gs}\rangle_{\mathbb{C}^2} \nonumber\\
&+2g\hat{\chi}*[ (g\hat{\chi}*\vec{A}_\mathrm{gs})\abs{u_\mathrm{gs}}^2_{\mathbb{C}^2} ] 
-2g\hat{\chi}*\Real\langle \vec{\nabla}\wedge\vec{\sigma}u,u\rangle_{\mathbb{C}^2}\,, 
\end{align*}
vanishes, which yields \eqref{eq:PF-Lagrange-A}.

Note that under our assumptions, $V_-$ is infinitesimally form bounded with respect to the operator~$(\vec{\sigma}\cdot(-i\vec{\nabla}-g\hat{\chi} * \vec{A}_\mathrm{gs}))^2$ (see \eqref{eq:V-_rel_form_bounded} below) from which, using the KLMN Theorem, it is not difficult to deduce that $H_{V,\vec{A}_\mathrm{gs}}$ identifies with a self-adjoint operator.
The minimizing property of~$(u_\mathrm{gs},\vec{A}_\mathrm{gs})$ means that
\begin{equation}\label{eq:HVA_min}
\langle u_\mathrm{gs}, (H_{V,\vec{A}_\mathrm{gs}}-E_V) u_\mathrm{gs} \rangle = 0\,.
\end{equation}
As 
$H_{V,\vec{A}_\mathrm{gs}}\geq E_V$, \eqref{eq:HVA_min} implies that $(H_{V,\vec{A}_\mathrm{gs}}-E_V)^{1/2} \, u_\mathrm{gs} = 0$ and thus
\begin{equation}\label{eq:PF-HVA-uc}
(H_{V,\vec{A}_\mathrm{gs}}-E_V) u_\mathrm{gs} = 0 \,,
\end{equation}
which ends the proof of the lemma.
\end{proof}

Two important ingredients in the proof of Proposition \ref{prop:equiv_PF} are the exponential decay of the electronic part $u_{\mathrm{gs}}$ and a virial argument. We begin with proving these two properties in Lemmata \ref{lm:exp_decay} and \ref{lm:virial}, respectively.
\begin{lemma}[Exponential decay of the ground state]\label{lm:exp_decay}
Under the assumptions of Proposition~\ref{prop:equiv_PF}, there exists $\gamma>0$ such that
\begin{equation}\label{eq:exp_decay}
\big\|e^{\gamma|x|}u_{\mathrm{gs}}\big\|_{L^2}<\infty.
\end{equation}
\end{lemma}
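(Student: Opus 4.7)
My plan is to prove \eqref{eq:exp_decay} by a classical Agmon-type argument, with the spectral gap $\delta := E_{V_1}-E_V>0$ provided by the hypothesis of Theorem~\ref{thm:Pauli-Fierz} as the decisive ingredient. The starting point is that, by Lemma~\ref{lem:H_VAgs}, $u_{\mathrm{gs}}$ is an eigenvector of the Pauli operator
\[
H_{V,\vec{A}_\mathrm{gs}} = \big(\vec{\sigma}\cdot(-i\vec{\nabla} - g\hat\chi*\vec{A}_\mathrm{gs})\big)^2 + V + \tfrac{1}{32\pi^3}\|\vec{A}_\mathrm{gs}\|_{\dot{H}^1}^2
\]
with eigenvalue $E_V$. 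The crucial asymptotic lower bound
\[
\big\langle w, H_{V_1,\vec{A}_\mathrm{gs}} w\big\rangle_{L^2} \ge E_{V_1}\,\|w\|_{L^2}^2, \qquad w\in\mathcal{Q}_V,
\]
follows at once from the variational definition of $E_{V_1}$ applied to $w/\|w\|_{L^2}\in\mathcal{U}$, together with $\vec{A}_\mathrm{gs}\in\mathcal{A}$; crucially, it requires no decay of $\vec{A}_\mathrm{gs}$ at infinity. Combining this with Hypothesis~\ref{condVPauliFierz}(ii), which yields $\|V_2 \mathds{1}_{|x|\ge R}\|_{L^\infty}\to 0$ as $R\to\infty$, I obtain, for any fixed $0<\nu<\delta/4$ and $R$ large enough, the form inequality
\[
\big\langle \tilde\eta_R w, (H_{V,\vec{A}_\mathrm{gs}}-E_V-\delta+\nu)\tilde\eta_R w\big\rangle_{L^2} \ge 0, \qquad w\in\mathcal{Q}_V.
\]

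Next, I fix $0<\gamma$ with $\gamma^2<\delta/2$ and introduce the regularized Agmon weight $F_\varepsilon(x) := \gamma |x|(1+\varepsilon|x|)^{-1}$, which is smooth and bounded with $|\vec\nabla F_\varepsilon|\le\gamma$ pointwise and $F_\varepsilon \nearrow \gamma|x|$ as $\varepsilon\to 0$. Set $v_\varepsilon := e^{F_\varepsilon} u_\mathrm{gs}\in \mathcal{Q}(H_{V,\vec{A}_\mathrm{gs}})$. Since $e^{F_\varepsilon}$ commutes with the multiplication operator $g\hat\chi*\vec{A}_\mathrm{gs}$, the conjugation identity $e^{F_\varepsilon}(\vec\sigma\cdot\vec{D}_\mathrm{gs})e^{-F_\varepsilon} = \vec\sigma\cdot\vec{D}_\mathrm{gs} + i\vec\sigma\cdot\vec\nabla F_\varepsilon$ (with $\vec{D}_\mathrm{gs} := -i\vec\nabla - g\hat\chi*\vec{A}_\mathrm{gs}$) gives, upon squaring,
\[
e^{F_\varepsilon}(\vec\sigma\cdot\vec{D}_\mathrm{gs})^2 e^{-F_\varepsilon} = (\vec\sigma\cdot\vec{D}_\mathrm{gs})^2 - |\vec\nabla F_\varepsilon|^2 + i\{\vec\sigma\cdot\vec{D}_\mathrm{gs},\, \vec\sigma\cdot\vec\nabla F_\varepsilon\}.
\]
Taking the real part of the trivial identity $\langle v_\varepsilon, e^{F_\varepsilon}(H_{V,\vec{A}_\mathrm{gs}}-E_V) u_\mathrm{gs}\rangle = 0$ (the anti-commutator contribution being purely imaginary) yields the Agmon identity
\[
\big\langle v_\varepsilon, (H_{V,\vec{A}_\mathrm{gs}} - E_V - |\vec\nabla F_\varepsilon|^2) v_\varepsilon\big\rangle = 0.
\]

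I then apply the magnetic IMS formula (Lemma~\ref{lem:MagneticIMS}) to the kinetic part, combined with $\eta_R^2 + \tilde\eta_R^2 = 1$ for the multiplicative contributions, to split the above identity into inner ($\eta_R$) and outer ($\tilde\eta_R$) pieces. Using the form lower bound of the first paragraph with the prescribed $R$, the outer contribution is bounded below by $(\delta-\nu-\gamma^2)\|\tilde\eta_R v_\varepsilon\|_{L^2}^2 \ge (\delta/4)\|\tilde\eta_R v_\varepsilon\|_{L^2}^2$. The inner contribution and the IMS error term $\langle v_\varepsilon, (|\vec\nabla\eta_R|^2 + |\vec\nabla\tilde\eta_R|^2) v_\varepsilon\rangle$ are controlled uniformly in $\varepsilon$ by a constant $C(R,\gamma)e^{4\gamma R}\|u_\mathrm{gs}\|_{\mathcal{Q}_V}^2$, because $F_\varepsilon\le 2\gamma R$ on the support of $\eta_R$ and of $\vec\nabla\eta_R, \vec\nabla\tilde\eta_R$, and because the quadratic form of $H_{V,\vec{A}_\mathrm{gs}}$ is continuous on $\mathcal{Q}_V$ in view of the estimates of Lemma~\ref{lm:new}. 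Rearranging gives $\|\tilde\eta_R v_\varepsilon\|_{L^2}$, and hence $\|v_\varepsilon\|_{L^2}$, bounded uniformly in $\varepsilon>0$. Monotone convergence as $\varepsilon\to 0$ then produces \eqref{eq:exp_decay}.

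The main obstacle is that the effective magnetic potential $g\hat\chi*\vec{A}_\mathrm{gs}$ is singular and need not decay at infinity. The use of the Pauli form is precisely what circumvents this difficulty: the Agmon conjugation only generates the scalar term $|\vec\nabla F_\varepsilon|^2$, with no dependence on $\vec{A}_\mathrm{gs}$, while the asymptotic bound $H_{V_1,\vec{A}_\mathrm{gs}}\ge E_{V_1}$ is a direct consequence of the variational characterization of $E_{V_1}$ and requires no decay assumption on $\vec{A}_\mathrm{gs}$.
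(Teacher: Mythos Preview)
Your argument is sound and self-contained, but it is a genuinely different route from the paper's: the paper simply observes that $u_{\mathrm{gs}}$ is a ground state of the Pauli operator $H_{V,\vec A_{\mathrm{gs}}}$ (Lemma~\ref{lem:H_VAgs}) and then invokes a general exponential-decay result from the literature (Griesemer, \emph{J.~Funct.~Anal.}~2004, Theorem~1). You instead run the Agmon machinery directly, using the variational lower bound $H_{V_1,\vec A_{\mathrm{gs}}}\ge E_{V_1}$ on $\mathcal Q_V$ together with $V_2\to 0$ at infinity to produce an effective gap at spatial infinity, and then the standard conjugation/IMS splitting with a regularized weight. The virtue of your approach is that it is explicit about what replaces the usual ``eigenvalue below the essential spectrum'' hypothesis, namely the nonlinear gap $E_{V_1}>E_V$; the virtue of the paper's approach is brevity.

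One point to flag: the lemma is stated \emph{under the assumptions of Proposition~\ref{prop:equiv_PF}}, which are only Hypotheses~\ref{condVPauliFierz} and~\ref{condChi} together with the existence of a minimizer, whereas you import the additional hypothesis $E_{V_1}>E_V$ from Theorem~\ref{thm:Pauli-Fierz}. So formally you prove a slightly weaker statement. In practice this is harmless, since the lemma is only ever applied in a setting where a minimizer is known to exist, and the paper's existence result is precisely Theorem~\ref{thm:Pauli-Fierz}; but you should either note this explicitly or check that the cited Griesemer result applies under the bare assumptions of Proposition~\ref{prop:equiv_PF}.
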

\begin{proof}
Recall from Lemma~\ref{lem:H_VAgs} that $u_{\mathrm{gs}}$ is a ground state of the Pauli operator~\eqref{eq:def_H_VAgs}. 
In particular, it is then known that $u_{\mathrm{gs}}$ decays exponentially in the sense that \eqref{eq:exp_decay} holds for some $\gamma>0$ (see e.g. \cite[Theorem 1]{Griesemer04}).
\end{proof}
\begin{lemma}[Virial argument]\label{lm:virial}
Under the assumptions of Proposition \ref{prop:equiv_PF}, 
\begin{equation*}
\big\langle u_{\mathrm{gs}},\big( -i\vec{\nabla}-g\hat\chi*\vec{A}_{\mathrm{gs}}\big) u_{\mathrm{gs}}\big\rangle=0.
\end{equation*}
\end{lemma}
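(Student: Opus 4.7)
The plan is to exploit a phase-shift variation in the electronic variable. For each $\vec{p}\in\mathbb{R}^3$ and $\epsilon\in\mathbb{R}$, set $u_\epsilon(x):=e^{i\epsilon\vec{p}\cdot x}u_{\mathrm{gs}}(x)$. Since $|u_\epsilon|=|u_{\mathrm{gs}}|$ one has $\|u_\epsilon\|_{L^2}=1$ and $\int V_+|u_\epsilon|^2=\int V_+|u_{\mathrm{gs}}|^2<\infty$, while $\vec{\nabla}u_\epsilon=i\epsilon\vec{p}\,u_\epsilon+e^{i\epsilon\vec{p}\cdot x}\vec{\nabla}u_{\mathrm{gs}}\in L^2$. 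Hence $u_\epsilon\in\mathcal{U}$, the pair $(u_\epsilon,\vec{A}_{\mathrm{gs}})$ is admissible, and by the minimality of $(u_{\mathrm{gs}},\vec{A}_{\mathrm{gs}})$ the scalar function $\epsilon\mapsto\mathcal{E}_V(u_\epsilon,\vec{A}_{\mathrm{gs}})$ attains its minimum at $\epsilon=0$.

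Next, I would compute this function explicitly. Setting $\vec{D}:=-i\vec{\nabla}-g\hat{\chi}*\vec{A}_{\mathrm{gs}}$, a direct computation gives the intertwining $\vec{D}u_\epsilon=e^{i\epsilon\vec{p}\cdot x}(\vec{D}+\epsilon\vec{p})u_{\mathrm{gs}}$, so $\|\vec{\sigma}\cdot\vec{D}u_\epsilon\|_{L^2}^2=\|\vec{\sigma}\cdot(\vec{D}+\epsilon\vec{p})u_{\mathrm{gs}}\|_{L^2}^2$. Because the constant vector $\vec{p}$ commutes componentwise with $\vec{D}$, the antisymmetric contribution $(\vec{D}+\epsilon\vec{p})\wedge(\vec{D}+\epsilon\vec{p})$ reduces to $\vec{D}\wedge\vec{D}$; together with the Pauli identity $(\vec{\sigma}\cdot\vec{X})(\vec{\sigma}\cdot\vec{Y})=\vec{X}\cdot\vec{Y}+i\vec{\sigma}\cdot(\vec{X}\wedge\vec{Y})$ this yields
\begin{equation*}
\bigl(\vec{\sigma}\cdot(\vec{D}+\epsilon\vec{p})\bigr)^2=(\vec{\sigma}\cdot\vec{D})^2+2\epsilon\,\vec{p}\cdot\vec{D}+\epsilon^2|\vec{p}|^2.
\end{equation*}
The $V$-term and $(32\pi^3)^{-1}\|\vec{A}_{\mathrm{gs}}\|_{\dot{H}^1}^2$ are unchanged by the unimodular phase, and hence
\begin{equation*}
\mathcal{E}_V(u_\epsilon,\vec{A}_{\mathrm{gs}})=\mathcal{E}_V(u_{\mathrm{gs}},\vec{A}_{\mathrm{gs}})+2\epsilon\,\vec{p}\cdot\bigl\langle u_{\mathrm{gs}},\vec{D}u_{\mathrm{gs}}\bigr\rangle+\epsilon^2|\vec{p}|^2.
\end{equation*}

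Differentiating at $\epsilon=0$ and using that this is a minimum forces $\vec{p}\cdot\langle u_{\mathrm{gs}},\vec{D}u_{\mathrm{gs}}\rangle=0$ for every $\vec{p}\in\mathbb{R}^3$, which is precisely the claim (the inner product above is real, as $\vec{D}$ is symmetric on $\mathcal{Q}_V$ and $\hat{\chi}*\vec{A}_{\mathrm{gs}}$ is real-valued under Hypothesis~\ref{condChi}). The only bookkeeping is to ensure that $\langle u_{\mathrm{gs}},\vec{D}u_{\mathrm{gs}}\rangle$ is a finite vector: the $-i\vec{\nabla}$ part is controlled by $\|u_{\mathrm{gs}}\|_{L^2}\|\vec{\nabla}u_{\mathrm{gs}}\|_{L^2}$ using $u_{\mathrm{gs}}\in H^1$, and the $g\hat{\chi}*\vec{A}_{\mathrm{gs}}$ part is controlled via Cauchy--Schwarz and Lemma~\ref{lem:estimateChiAU}. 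I anticipate no substantive obstacle: the identity is an optimality condition against a gauge-like phase variation, requiring essentially nothing beyond that $\epsilon=0$ is a minimum of a smooth quadratic function of~$\epsilon$.
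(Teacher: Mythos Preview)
Your argument is correct and takes a genuinely different route from the paper. The paper proves the identity via the commutator $[H_{V,\vec{A}_{\mathrm{gs}}},x]=-2i(-i\vec{\nabla}-g\hat\chi*\vec{A}_{\mathrm{gs}})$: since $u_{\mathrm{gs}}$ is an eigenvector of $H_{V,\vec{A}_{\mathrm{gs}}}$ (Lemma~\ref{lem:H_VAgs}), the expectation of this commutator in $u_{\mathrm{gs}}$ vanishes, and the claim follows. That route, however, needs $u_{\mathrm{gs}}\in\mathcal{D}(x)$ to make the commutator computation legitimate, which the paper obtains from the exponential decay established in Lemma~\ref{lm:exp_decay}. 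Your phase-shift variation $u_\epsilon=e^{i\epsilon\vec{p}\cdot x}u_{\mathrm{gs}}$ is the conjugate viewpoint---instead of multiplying by $x$ you conjugate by the unitary it generates---and it sidesteps the decay lemma entirely: the only regularity used is $u_{\mathrm{gs}}\in\mathcal{Q}_V\subset H^1$ together with Lemma~\ref{lem:estimateChiAU}. The paper's argument is the classical virial computation and makes the name of the lemma transparent; your argument is more economical in prerequisites and yields the result directly from the minimizing property without invoking Lemma~\ref{lm:exp_decay}.
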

\begin{proof}
We use the Pauli operator defined in~\eqref{eq:def_H_VAgs}. A direct computation shows that, in the sense of quadratic forms on $\mathcal{D}(H_{V,\vec{A}_{\mathrm{gs}}})\cap\mathcal{D}(x)$, we have
\begin{equation*}
\big[ H_{V,\vec{A}_{\mathrm{gs}}} , x \big ] = -2i \big( -i\vec{\nabla}-g\hat\chi*\vec{A}_{\mathrm{gs}}\big).
\end{equation*}
Since $H_{V,\vec{A}_{\mathrm{gs}}}u_{\mathrm{gs}}=E_Vu_{\mathrm{gs}}$ and $u_{\mathrm{gs}}$ belongs to~$\mathcal{D}(x)$ by Lemma \ref{lm:exp_decay}, we deduce that
\begin{multline*}
\big\langle u_{\mathrm{gs}}, \big( -i\vec{\nabla}-g\hat\chi*\vec{A}_{\mathrm{gs}}\big) u_{\mathrm{gs}}\big\rangle \\
= \frac{i}{2} \big\langle \big( H_{V,\vec{A}_{\mathrm{gs}}} - E_V \big ) u_{\mathrm{gs}},x u_{\mathrm{gs}}\big\rangle - \frac{i}{2} \big\langle x u_{\mathrm{gs}},\big( H_{V,\vec{A}_{\mathrm{gs}}} - E_V \big )  u_{\mathrm{gs}}\big\rangle = 0.
\end{multline*}
This proves the lemma.
\end{proof}
Now we are ready to prove Proposition \ref{prop:equiv_PF}.
\begin{proof}[Proof of Proposition \ref{prop:equiv_PF}]
Recall that $\vec{f}_{\mathrm{gs}}$ and $\vec{A}_{\vec{f}_{\mathrm{gs}}}$ are related as in \eqref{eq:def_Af}.
Moreover, $\vec{A}_{\mathrm{gs}}$ satisfies the relation \eqref{eq:PF-Lagrange-A}, which 
implies that
\begin{equation*}
\|\vec{f}_{\mathrm{gs}}\|_{L^2}\lesssim \big\|\vec{A}_{\vec{f}_{\mathrm{gs}}}\big\|_{\dot{H}^{1/2}}\lesssim |g| \Big\|\frac{\chi}{|k|^{\frac32}} \vec{F}_{\mathrm{gs}} \Big\|_{L^2},
\end{equation*}
where, to shorten notations, we have set $\vec{F}_{\mathrm{gs}}:=\vec{F}_{\mathrm{gs},1}+\vec{F}_{\mathrm{gs},2}$, with
\begin{equation*}
\vec{F}_{\mathrm{gs},1}:=\bar{\mathcal{F}}\big( \langle -i\vec{\nabla}u_{\mathrm{gs}}-g(\hat\chi*\vec{A}_{\vec{f}_{\mathrm{gs}}}) u_{\mathrm{gs}}, u_{\mathrm{gs}}\rangle_{\mathbb{C}^2}\big), \quad \vec{F}_{\mathrm{gs},2}:=\bar{\mathcal{F}}\big(\langle \vec{\nabla}\wedge\vec{\sigma}u_{\mathrm{gs}},u_{\mathrm{gs}}\rangle_{\mathbb{C}^2} \big) .
\end{equation*}
We can estimate
\begin{align*}
\|\vec{f}_{\mathrm{gs}}\|_{L^2}&\lesssim \Big\|\frac{\chi}{|k|}\Big\|_{L^2+L^{3,\infty}}\Big\|\frac{1}{|k|^{\frac12}} \vec{F}_{\mathrm{gs}} \Big\|_{L^\infty \cap L^{6,2}} \,.
\end{align*}
Using the cutoff functions $\eta$, $\tilde{\eta}$, we separate the contributions from $k$ in a neighborhood of the origin and $k$ in a neighborhood of $\infty$, obtaining, since ${\tilde{\eta}}^2|k|^{-1/2}\le1$,
\begin{align*}
\|\vec{f}_{\mathrm{gs}}\|_{L^2}&\lesssim \Big\|\frac{\chi}{|k|}\Big\|_{L^2+L^{3,\infty}} \Big(\Big\|\frac{1}{|k|^{\frac12}} \eta^2\vec{F}_{\mathrm{gs}}\Big\|_{L^\infty \cap L^{6,2}}+\big\|\vec{F}_{\mathrm{gs}}\big\|_{L^\infty \cap L^{6,2}}\Big)\,.
\end{align*}
Clearly, $\|  \vec{F}_{\mathrm{gs}} \|_{L^\infty}<\infty$ since $F_{\mathrm{gs},1}$, $F_{\mathrm{gs},2}$ are the Fourier transforms of products of $L^2$-functions. Moreover, $\|\vec{F}_{\mathrm{gs}}\|_{L^{6,2}}<\infty$ by Lemma~\ref{lem:estimateChikFu1u2}. Thanks to the cutoff function $\eta$, we also have
\begin{equation*}
\Big\|\frac{1}{|k|^{\frac12}} \eta^2\vec{F}_{\mathrm{gs}}\Big\|_{L^{6,2}}\lesssim\Big\|\frac{1}{|k|^{\frac12}} \eta^2\vec{F}_{\mathrm{gs}}\Big\|_{L^{\infty}}.
\end{equation*}
Hence it remains to show that the right-hand-side of the previous equation is finite.

To this end, we estimate the contributions from $F_{\mathrm{gs},1}$ and $F_{\mathrm{gs},2}$ separately. We begin with~$F_{\mathrm{gs},1}$. We observe that, by Lemma \ref{lm:virial},
\begin{equation*}
\vec{F}_{\mathrm{gs},1}(0)=\big\langle u_{\mathrm{gs}},\big( -i\vec{\nabla}-g\hat\chi*\vec{A}_{\mathrm{gs}}\big) u_{\mathrm{gs}}\big\rangle_{L^2}=0.
\end{equation*}
Moreover, using Lemma \ref{lm:exp_decay}, Lemma \ref{lem:estimateChiAU} and the fact that $u_{\mathrm{gs}}$ belongs to~$\dot{H}^1$, we have, for all multi-index $\alpha\in\mathbb{N}^3$,
\begin{align*}
\big\|\partial_k^\alpha\vec{F}_{\mathrm{gs},1}\big\|_{L^\infty} &\lesssim \big\| \langle -i\vec{\nabla}u_{\mathrm{gs}}-(g\hat\chi*\vec{A}_{\vec{f}_{\mathrm{gs}}}) u_{\mathrm{gs}},  x^\alpha u_{\mathrm{gs}} \rangle_{\mathbb{C}^2}\big\|_{L^1}\\
&\lesssim \big\|  -i\vec{\nabla}u_{\mathrm{gs}}-(g\hat\chi*\vec{A}_{\vec{f}_{\mathrm{gs}}}) u_{\mathrm{gs}} \big\|_{L^2} \big\|x^\alpha u_{\mathrm{gs}} \big\|_{L^2} < \infty.
\end{align*}
Hence $\vec{F}_{\mathrm{gs},1}$ belongs to the Sobolev space $W^{\infty,\infty}(\mathbb{R}^3;\mathbb{R}^3)$. Applying the mean-value theorem then yields
\begin{equation*}
\Big\|\frac{1}{|k|^{\frac12}} \eta^2\vec{F}_{\mathrm{gs},1}\Big\|_{L^{\infty}}\le \sup_{|\alpha|=1}\big\||k|^{\frac12} \eta^2 \partial_x^\alpha \vec{F}_{\mathrm{gs},1}\big\|_{L^{\infty}}\lesssim \sup_{|\alpha|=1}\big\| \partial_x^\alpha \vec{F}_{\mathrm{gs},1}\big\|_{L^{\infty}}<\infty.
\end{equation*}

Now we consider $F_{\mathrm{gs},2}$. Since
\begin{align*}
\vec{F}_{\mathrm{gs},2}(k) = k\wedge\bar{\mathcal{F}}\big(\langle \vec{\sigma}u_{\mathrm{gs}},u_{\mathrm{gs}}\rangle_{\mathbb{C}^2} \big)(k),
\end{align*}
we can estimate
\begin{equation*}
\Big\|\frac{1}{|k|^{\frac12}} \eta^2\vec{F}_{\mathrm{gs},2}\Big\|_{L^{\infty}}\le \Big\||k|^{\frac12} \eta^2\bar{\mathcal{F}}\big(\langle \vec{\sigma}u_{\mathrm{gs}},u_{\mathrm{gs}}\rangle_{\mathbb{C}^2} \big)\Big\|_{L^{\infty}} \lesssim \Big\|\bar{\mathcal{F}}\big(\langle \vec{\sigma}u_{\mathrm{gs}},u_{\mathrm{gs}}\rangle_{\mathbb{C}^2} \big)\Big\|_{L^{\infty}}<\infty.
\end{equation*}
This concludes the proof of the proposition.
\end{proof}

\subsection{Coercivity, energy gap and existence of a minimizer}\label{sec:PF-existence}

In this section we prove Theorem \ref{thm:Pauli-Fierz}, namely the existence of a global minimizer for the Maxwell--Schr\"odinger energy functional. We use coercivity and lower semicontinuity arguments. 

Before we prove Theorem \ref{thm:Pauli-Fierz}, we establish a coercivity result which will allow us to show that any minimizing sequence is bounded in $\mathcal{U}\times\mathcal{A}$ (recall that $\mathcal{U}$ has been defined in \eqref{eq:defU_intro} and $\mathcal{A}$ in \eqref{eq:def_space_A}).
\begin{lemma}[Coercivity]
\label{lem:coercivity-pauli-fierz}
Suppose that $V$ satisfies Hypothesis~\ref{condVPauliFierz} and $\chi=\chi_1+\chi_2$ satisfies Hypothesis \ref{condChi}, with~$\chi_1/|k|$ in~$L^2$ and $\chi_2/|k|$ in~$L^{3,\infty}$. If 
\begin{equation*} 
32\pi^3 a C^2 g^2 \Big\|\frac{\chi_2}{|k|}\Big\|_{L^{3,\infty}}^2<1 \,,
\end{equation*}
with the constant $a\ge0$ from Hypothesis~\ref{condVPauliFierz} and the universal constant $C>0$ from Lemma~\ref{lem:estimateChiAU},
then for all $(u,\vec{A})$ in $\mathcal{U\times A}$ such that $\|(u,\vec{A})\|_{\mathcal{U\times A}}\geq 16(2+a)^{2}$\, we have
\[
\mathcal{E}_V(u,\vec{A})\geq C_{1}\|(u,\vec{A})\|_{\mathcal{U\times A}} - C_{2}\, ,
\]
with
\begin{itemize}
    \item $C_{1}=:\varepsilon/\max\{4,32g^{2}C^2\big\| \chi/|k|\big\|_{L^2+L^{3,\infty}}^2\}$, 
    \item $C_2=:b + a^{2}\big(1+\frac{C^2g^2}{\varepsilon} \big\| \chi_1/|k| \big\|_{L^2}^2\big)$, 
    \item $2\varepsilon :=(32\pi^3)^{-1}- C^2ag^{2} \big\| \chi_2/|k| \big\|_{L^{3,\infty}}^{2}$.
\end{itemize}
\end{lemma}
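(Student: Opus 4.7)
The overall goal is to establish a quadratic lower bound
\[
\mathcal{E}_V(u,\vec{A}) \;\geq\; c_1\bigl(\|\nabla u\|_{L^2}^2 + \langle u, V_+ u\rangle_{L^2}\bigr) + 2\varepsilon\,\|\vec{A}\|_{\dot H^1}^2 - \tilde C,
\]
from which the linear coercivity follows: writing $\|(u,\vec{A})\|_{\mathcal{U}\times\mathcal{A}}^2 = 1 + \|\nabla u\|^2 + \langle u, V_+u\rangle + \|\vec{A}\|_{\dot H^1}^2$ (using $\|u\|_{L^2}=1$) and exploiting the hypothesis $\|(u,\vec{A})\|_{\mathcal{U}\times\mathcal A} \geq 16(2+a)^2$, one trades the quadratic bound for $\mathcal{E}_V \geq C_1\|(u,\vec A)\|_{\mathcal U\times\mathcal A} - C_2$ with the stated constants.

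\textbf{Intermediate steps.} The computation starts from the pointwise identity $(\vec\sigma\cdot\vec{a})^2 = |\vec{a}|^2\mathbf{I}_{\mathbb{C}^2}$ (valid for real $\vec{a}$), which gives
\[
\|\vec\sigma\cdot(-i\vec\nabla - g\hat\chi *\vec A)u\|_{L^2}^2 = \|\nabla u\|^2_{L^2} - 2g\,\Re\langle\vec\sigma\cdot(-i\vec\nabla)u,\,\vec\sigma\cdot(\hat\chi *\vec A)u\rangle + g^2\|(\hat\chi * \vec A)u\|^2_{L^2}.
\]
Cauchy--Schwarz and Young's inequality (parameter $1/2$) then yield $\|\vec\sigma\cdot(-i\vec\nabla-g\hat\chi*\vec A)u\|^2\geq\tfrac12\|\nabla u\|^2 - g^2\|(\hat\chi*\vec A)u\|^2$. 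Combining this with $\langle u, V_-u\rangle\leq a\|u\|_{\dot H^{1/2}}^2 + b$ (from Hypothesis~\ref{condVPauliFierz}) and the decomposition-based estimate \eqref{eq:25} of Lemma~\ref{lem:estimateChiAU},
\[
g^2\|(\hat\chi *\vec A)u\|^2 \leq 2C^2 g^2 \|\chi_1/|k|\|_{L^2}^2\, Y \;+\; 2C^2 g^2 \|\chi_2/|k|\|_{L^{3,\infty}}^2\, Y\,\|u\|_{\dot H^{1/2}}^2,
\]
with $Y := \|\vec A\|_{\dot H^1}^2$, produces a preliminary lower bound in which $\|u\|_{\dot H^{1/2}}^2$ is multiplied by the combined coefficient $(a + 2C^2 g^2\|\chi_2/|k|\|_{L^{3,\infty}}^2\,Y)$.

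\textbf{Main obstacle.} The delicate step is to absorb the resulting cross terms while keeping the coefficients of $\|\nabla u\|^2$, $\langle u, V_+u\rangle$ and $Y$ strictly positive. The key is the interpolation $\|u\|_{\dot H^{1/2}}^2 \leq \|u\|_{L^2}\|u\|_{\dot H^1} = \|\nabla u\|$ (using $\|u\|_{L^2}=1$), after which $(a + cY)\|\nabla u\|$ (with $c = 2C^2 g^2\|\chi_2/|k|\|_{L^{3,\infty}}^2$) is split via Young's inequality with parameters calibrated against the smallness hypothesis $32\pi^3\,aC^2 g^2 \|\chi_2/|k|\|_{L^{3,\infty}}^2 < 1$: a fraction of $\tfrac14\|\nabla u\|^2$ absorbs $a\|\nabla u\|$ at the cost of a constant of order $a^2$, while the $cY\|\nabla u\|$ part is balanced against the available $Y$-budget so that the smallness guarantees that exactly $-C^2 a g^2\|\chi_2/|k|\|_{L^{3,\infty}}^2\,Y$ is removed from the raw $Y$-coefficient $1/(32\pi^3)$, leaving the residual $2\varepsilon\,Y > 0$. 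The remaining $-2C^2 g^2\|\chi_1/|k|\|_{L^2}^2\,Y$ contribution from $\chi_1$ is absorbed by a further Young step against the residual $\varepsilon\,Y$ budget, which produces the $a^2 C^2 g^2\|\chi_1/|k|\|_{L^2}^2/\varepsilon$ summand in $C_2$.

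\textbf{Conclusion.} Once the quadratic lower bound is in place, the linear coercivity on $\{\|(u,\vec A)\|_{\mathcal U\times\mathcal A}\geq 16(2+a)^2\}$ is obtained by factoring out one power of $\|(u,\vec A)\|_{\mathcal U\times\mathcal A}$ and using the threshold to trade the remaining constant against half of the quadratic part; the specific value $16(2+a)^2$ and the $\max\{4, 32g^2 C^2\|\chi/|k|\|_{L^2+L^{3,\infty}}^2\}$ denominator in $C_1$ arise from the optimal scaling of the two absorption steps described above.
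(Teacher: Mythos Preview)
Your argument has a genuine gap at the ``main obstacle'' step. After your expansion and the use of $\|u\|_{\dot H^{1/2}}^2\le\|\nabla u\|$, you arrive (with $X:=\|\nabla u\|$, $Y:=\|\vec A\|_{\dot H^1}^2$, $c:=2C^2g^2\|\chi_2/|k|\|_{L^{3,\infty}}^2$) at
\[
\mathcal E_V(u,\vec A)\ge \tfrac12 X^2-(a+cY)X+\langle u,V_+u\rangle-b-2C^2g^2\Big\|\frac{\chi_1}{|k|}\Big\|_{L^2}^2 Y+\frac{1}{32\pi^3}Y.
\]
The problematic term is the \emph{cubic} cross term $-cYX$. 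You claim it can be ``balanced against the available $Y$-budget'' so as to remove exactly $C^2ag^2\|\chi_2/|k|\|_{L^{3,\infty}}^2\,Y$ from the $Y$-coefficient, but no Young-type inequality does this: any splitting $cYX\le \alpha X^2+\beta Y^2$ produces a quartic $Y^2$ that cannot be absorbed by the linear $Y$-term, and $cYX\le \gamma Y$ would require an a~priori bound on $X$ that you do not have. Concretely, along rays $X=\lambda Y$ with $\lambda>0$ small, the dominant term on the right of your displayed inequality is $((\tfrac12-c_1)\lambda^2-c\lambda)Y^2$, which is negative for small $\lambda$ and large $Y$; hence your intermediate inequality does \emph{not} imply the uniform quadratic lower bound $\mathcal E_V\ge c_1(X^2+\langle u,V_+u\rangle)+2\varepsilon Y-\tilde C$ that you asserted.

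The paper sidesteps exactly this difficulty by \emph{not} trying to lower--bound $\mathcal E_V$ in terms of $\|\nabla u\|^2$. Instead it keeps the magnetic kinetic energy $K:=\|\vec\sigma\cdot(-i\vec\nabla-g\hat\chi*\vec A)u\|_{L^2}$ intact, first obtaining
\[
\mathcal E_V(u,\vec A)\ge \varepsilon\big(\langle u,V_+u\rangle+(K-a)^2+\|\vec A\|_{\dot H^1}^2\big)-C_2,
\]
and then derives the linear coercivity via a \emph{case analysis} on which of $\|\vec A\|_{\dot H^1}$, $\langle u,V_+u\rangle^{1/2}$, or $\|u\|_{H^1}$ is large compared to $R:=\|(u,\vec A)\|_{\mathcal U\times\mathcal A}$. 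In the last case the triangle inequality $\|\nabla u\|\le K+g\|(\hat\chi*\vec A)u\|$ together with Lemma~\ref{lem:estimateChiAU} forces $\|\vec A\|_{\dot H^1}$ to be large whenever $K$ is not; this is precisely where the factor $32g^2C^2\|\chi/|k|\|_{L^2+L^{3,\infty}}^2$ in the denominator of $C_1$ originates. Your explanation of that factor (``optimal scaling of the two absorption steps'') is inconsistent with your own scheme: had your quadratic bound been valid, linear coercivity would follow with $C_1=\min(c_1,2\varepsilon)$ and no such $g$-dependent denominator would appear.
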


\begin{proof}
Thanks to Lemma~\ref{lem:estimateChiAU}, with the constant~$a$ from Hypothesis~\ref{condVPauliFierz}, we can write
\begin{multline*}
\|u\|_{\dot{H}^{1/2}}^{2} 
\leq \|\vec{\nabla} u\|_{L^{2}} 
= \|\vec{\sigma} \cdot \vec{\nabla} u\|_{L^{2}} 
\leq \|\vec{\sigma} \cdot(-i\vec{\nabla}-g\,\hat \chi*\vec{A})u\|_{L^{2}}+g\|\vec{\sigma} \cdot(\hat \chi*\vec{A})\,u\|_{L^{2}}\\
\leq \|\vec{\sigma} \cdot(-i\vec{\nabla}-g\,\hat \chi*\vec{A})u\|_{L^{2}}+Cg \|\vec{A}\|_{\dot{H}^{1}} \left(\left\|\frac{\chi_1}{|k|}\right\|_{L^2}+\left\|\frac{\chi_2}{|k|}\right\|_{L^{3,\infty}}\|u\|_{\dot{H}^{1/2}} \right)\\
\leq\|\vec{\sigma} \cdot (-i\vec{\nabla}-g\,\hat \chi*\vec{A})u\|_{L^{2}}+ \frac{aC^2g^2}{2\varepsilon} \left\|\frac{\chi_1}{|k|}\right\|_{L^2}^2 
+\left(\frac{\varepsilon}{2a}+\frac{C^2g^{2}}{2}\left\|\frac{\chi_2}{|k|}\right\|_{L^{3,\infty}}^{2} \right) \|\vec{A}\|_{\dot{H}^{1}}^{2}
+\frac{1}{2}\|u\|_{\dot{H}^{1/2}}^{2}\,.
\end{multline*}
Hence, 
\begin{align*}
\|u\|_{\dot{H}^{1/2}}^{2}\leq2\|\vec{\sigma} \cdot(-i\vec{\nabla}-g\,\hat\chi*\vec{A})u\|_{L^{2}}+\frac{a C^2 g^2}{\varepsilon} \left\|\frac{\chi_1}{|k|}\right\|_{L^2}^2 
+\left(\frac{\varepsilon}{a}+C^2 g^{2} \left\|\frac{\chi_2}{|k|}\right\|_{L^{3,\infty}}^{2} \right) \|\vec{A}\|_{\dot{H}^{1}}^{2} \,.
\end{align*}
It follows from Hypothesis \ref{condVPauliFierz} that 
\begin{align}
\langle u,V_{-} u\rangle 
\leq 2a\|\vec{\sigma} \cdot(-i\vec{\nabla}-g\,\hat\chi*\vec{A})u\|_{L^{2}}
+\Big(\frac{1}{32\pi^3}-\varepsilon\Big) \|\vec{A}\|_{\dot{H}^{1}}^{2} 
+\frac{C^2 a^2 g^2}{\varepsilon} \left\|\frac{\chi_1}{|k|}\right\|_{L^2}^2  + b \label{eq:V-_rel_form_bounded} 
\end{align}
and hence,
\begin{align*}
\mathcal{E}_V(u,\vec A) & \geq \langle u,V_{+} u\rangle + \big(\|\vec{\sigma} \cdot(-i\vec{\nabla}-g\,\hat\chi*\vec{A})u\|_{L^{2}}-a\big)^{2}
 +\varepsilon \|\vec{A}\|_{\dot{H}^{1}}^{2}\\
 &\qquad-a^{2}\Big(1+\frac{C^2 g^2}{\varepsilon} \left\|\frac{\chi_1}{|k|}\right\|_{L^2}^2\Big) - b\\
& \geq \varepsilon\Big(\langle u,V_{+} u\rangle + \big(\|\vec{\sigma} \cdot(-i\vec{\nabla}-g\,\hat\chi*\vec{A})u\|_{L^{2}}-a\big)^{2}+\|\vec{A}\|_{\dot{H}^{1}}^{2}\Big) - C_2.
\end{align*}
Let us suppose that $R=\|(u,\vec{A})\|_{\mathcal{U\times A}}\geq4$. We consider
three cases:
\begin{enumerate}[leftmargin=0.7cm]
\item If $\|\vec{A}\|_{\dot{H}^{1}}\geq R/4$, then $\mathcal{E}(u,\vec{A})\geq \varepsilon R^{2}/16 - C_2\geq \varepsilon R/4 - C_2$.
\item If $\langle u,V_{+} u\rangle \geq R^2/16$, then $\mathcal{E}(u,\vec{A})\geq \varepsilon R^{2}/16 - C_2 \geq \varepsilon R/4 - C_2$.
\item Otherwise $\|u\|_{H^1}\geq R/2$ and
$$\|\vec{\nabla} u\|_{L^{2}}^2 =\|u\|_{H^{1}}^2-1 \geq R^2/4-1\geq (R/2-1)^2\,.$$
We distinguish two subcases:
\begin{enumerate}[leftmargin=0.7cm]
\item If $\big(\|\vec{\sigma} \cdot(-i\vec{\nabla}-g\,\hat\chi*\vec{A})u\|_{L^{2}}-a\big)^{2} \geq R/4$, then $\mathcal{E}(u,\vec{A})\geq \varepsilon R/4-C_2$, 
\item If $\big(\|\vec{\sigma} \cdot(-i\vec{\nabla}-g\,\hat\chi*\vec{A})u\|_{L^{2}}-a\big)^{2} < R/4$, then
\begin{align*}
\frac{R}{2}-1-a-\frac{R^{1/2}}{2}&\leq\|\vec{\nabla} u\|_{L^{2}}-\|\vec{\sigma}\cdot(-i\vec{\nabla} -g\,\hat\chi*\vec{A})u\| \\
&\leq g\|(\hat\chi*\vec{A})u\|_{L^{2}}\\
&\leq 4gC\Big\|\frac{\chi}{|k|}\Big\|_{L^2+L^{3,\infty}}\|\vec{A}\|_{\dot{H}^{1}} \frac{R^{1/2}}{2}.
\end{align*}
Therefore, for $R\geq 16(2+a )^{2}$,
\[
\frac{R^{1/2}}{4\sqrt{2}gC\big\|\frac{\chi}{|k|}\big\|_{L^2+L^{3,\infty}}}\leq\frac{R^{1/2}-(2+a)}{4gC\big\|\frac{\chi}{|k|}\big\|_{L^2+L^{3,\infty}}}\leq\|\vec{A}\|_{\dot{H}^{1}}
\]
 and hence
\[
\mathcal{E}_V(u,\vec{A})\geq \varepsilon \|\vec{A}\|_{\dot{H}^{1}}^{2}-C_2 \geq\frac{\varepsilon}{32g^{2}C^2\big\|\frac{\chi}{|k|}\big\|_{L^2+L^{3,\infty}}^2}R-C_2\,.
\]
\end{enumerate}
\end{enumerate}
This yields the result. 
\end{proof}
We are now ready to prove Theorem~\ref{thm:Pauli-Fierz}.
\begin{proof}[Proof of Theorem~\ref{thm:Pauli-Fierz}]
Let $(u_{j},\vec A_{j})_{j\in\mathbb{N}}$
be a minimizing sequence for $\mathcal{E}$ in $\mathcal{U\times A}$. In particular, $(\mathcal{E}(u_{j},\vec A_{j}))_j$ is bounded and hence, by Lemma \ref{lem:coercivity-pauli-fierz}, $(u_{j},\vec A_{j})_j$ is bounded in $\mathcal{U\times A}$. Hence the sequence $(u_{j},\vec A_{j})_{j}$ converges weakly
to some limit $(u_{\infty},\vec A_{\infty})$ in~$\mathcal{U\times A}$ w.r.t.~the topology of~$\mathcal{Q}_V\times \dot{H}^1$.

We first show that 
\begin{equation}
\|u_{j}-u_{\infty}\|_{L^{2}}\xrightarrow[j\to\infty]{}0\,.\label{eq:uj-to-u-infinity}
\end{equation}
Let $\varepsilon>0$. By Hypothesis~\ref{condVPauliFierz} there exists~$R>0$
such that $|V_{2}(x)|\leq\varepsilon(E_{V_{1}}-E_{V})$ for $|x|\geq R$.
Recall that the cutoff functions $\eta_R$, $\tilde{\eta}_R$ have been defined in \eqref{eq:defetaR}.
We have
\begin{multline}
\|u_{j}-u_{\infty}\|_{L^{2}}^{2}=\|\eta_{R}(u_{j}-u_{\infty})\|_{L^{2}}^{2}+\|\tilde{\eta}_{R}(u_{j}-u_{\infty})\|_{L^{2}}^{2}\\
\leq\|\eta_{R}(u_{j}-u_{\infty})\|_{L^{2}}^{2}+2\|\tilde{\eta}_{R}u_{j}\|_{L^{2}}^{2}+2\|\tilde{\eta}_{R}u_{\infty}\|_{L^{2}}^{2}\,.\label{eq:splitting-of-uj-uinfty}
\end{multline}
\emph{}For $u$ in~$\mathcal{U}$, the magnetic IMS localization formula~(\ref{eq:IMS-magnetic}) yields
\begin{align}
\mathcal{E}_V(u,\vec A) & =\langle\eta_{R}\,u,((-i\vec{\nabla}-\hat\chi * \vec{A}\, )^{2}+V - \vec{\sigma} \cdot (\vec{\nabla}\wedge \vec{A}))\, \eta_{R}\,u\rangle \nonumber \\
 & \qquad+\langle\tilde{\eta}_{R}\,u,((-i\vec{\nabla}-\hat\chi * \vec{A}\, )^{2}+V_{1} - \vec{\sigma} \cdot (\vec{\nabla}\wedge \vec{A}))\, \tilde{\eta}_{R}\,u\rangle \nonumber \\
 & \qquad+\langle u,(\tilde{\eta}_{R}^{2}\,V_{2}+|\vec{\nabla}\eta_{R}|^{2}+|\vec{\nabla}\tilde{\eta}_{R}|^{2})u\rangle+\frac{1}{32\pi^3}\|\vec A\|^2_{\dot{H}^1} \nonumber \\
 & =\mathcal{E}_V(\frac{\eta_{R}\,u}{\|\eta_{R}\,u\|_{L^{2}}}, \vec A )\,\|\chi_{R}\,u\|_{L^{2}}^{2}+\mathcal{E}_{V_{1}}(\frac{\tilde{\eta}_{R}\,u}{\|\tilde{\eta}_{R}\,u\|_{L^{2}}}, \vec A )\,\|\tilde{\eta}_{R}\,u\|_{L^{2}}^{2}\nonumber \\
 & \qquad+\langle u,(\tilde{\eta}_{R}^{2}\,V_{2}+|\vec{\nabla}\eta_{R}|^{2}+|\vec{\nabla}\tilde{\eta}_{R}|^{2})u\rangle\nonumber \\
 & \geq E_{V}\,\|\eta_{R}\,u\|_{L^{2}}^{2}+E_{V_{1}}\,\|\tilde{\eta}_{R}\,u\|_{L^{2}}^{2}-\varepsilon(E_{V_{1}}-E_{V}) . \label{eq:lower-bound-E-tilde-control-chi-R-bar-u}
\end{align}
As $(u_{j}, \vec A_{j})$ is a minimizing sequence, (\ref{eq:lower-bound-E-tilde-control-chi-R-bar-u})
yields, for $j$ large enough,
\begin{equation}
\|\tilde{\eta}_{R}\,u_{j}\|_{L^{2}}^{2}\leq\frac{\mathcal{E}_V(u_{j},\vec A_{j})-E_{V}}{E_{V_{1}}-E_{V}}+\varepsilon\leq2\varepsilon\,.\label{eq:chi-R-bar-u-j}
\end{equation}
By the lower semi-continuity of the $L^{2}$ norm,
\begin{equation}
\|\tilde{\eta}_{R}\,u_{\infty}\|_{L^{2}}^{2}\leq\liminf_{j\to\infty}\|\tilde{\eta}_{R}\,u_{j}\|_{L^{2}}^{2}\leq2\varepsilon\,.\label{eq:chi-r-u-infty}
\end{equation}
The sequence $(u_{j})_{j}$ converges towards $u_{\infty}$ weakly
in $\mathcal{Q}_V$ and thus the sequence $(\eta_{R}\,u_{j})$ converges
weakly in $H^{1}$ towards $\eta_{R}\,u_{\infty}$. Using the
compactness of the set $\overline{B(0,2R)}$ and the Rellich-Kondrachov
Theorem, we deduce that $(\eta_{R}\,u_{j})$ converges strongly in $L^{2}$ to
$\eta_{R}\,u_{\infty}$. This together with (\ref{eq:splitting-of-uj-uinfty}),
(\ref{eq:chi-R-bar-u-j}) and (\ref{eq:chi-r-u-infty}) prove (\ref{eq:uj-to-u-infinity}).

To show that ${\displaystyle \liminf_{j\to\infty}\mathcal{E}(u_{j},\vec A_{j})\geq\mathcal{E}(u_{\infty},\vec A_{\infty})}$,
we split $\mathcal{E}(u,\vec A\, )$ into five parts:
\begin{multline*}
\mathcal{E}_V(u,\vec A)=
\overset{\mathcal{E}_{1}(u)}{\overbrace{\langle u,H_V u\rangle}}+\frac{1}{32\pi^3}\overset{\mathcal{E}_{2}(\vec A)}{\overbrace{\|\vec A\|^2_{\dot{H}^1}}}
-2g \Real\overset{\mathcal{E}_{3}(u,\vec A)}{\overbrace{\big\langle -i\vec{\nabla} u, (\hat\chi*\vec{A}) u \big\rangle}} \\
+\underset{\mathcal{E}_{4}(u,\vec A)}{\underbrace{\big\langle u,(\hat\chi*\vec{A})^{2}u\big\rangle}}
-g\underset{\mathcal{E}_{5}(u,\vec A)}{\underbrace{ \big\langle u,\vec{\sigma}\cdot (\hat\chi * \vec{\nabla} \wedge \vec{A}) u \big\rangle}} \,.
\end{multline*}
By Lemma~\ref{lem:LowerSemiContDeltaV}, we have
\begin{equation}
\liminf_{j\to\infty}\mathcal{E}_{1}(u_{j})\geq\mathcal{E}_{1}(u_{\infty})\label{eq:lim-E-tilde-1}\,.
\end{equation}
By the lower semi-continuity of $\|\cdot\|_{\dot{H}^1}$,
\begin{equation}
\liminf_{j\to\infty}\mathcal{E}_{2}(\vec A_{j})\geq\mathcal{E}_{2}(\vec A_{\infty})\,.\label{eq:lim-E-tilde-2}
\end{equation}
Now using $\vec \nabla \cdot \vec{A}_j=0$, the Cauchy-Schwarz inequality, the boundedness of $(\|(u_{j},\vec A_{j})\|_{\mathcal{U\times A}})_{j}$ and Lemma~\ref{lem:estimateChiAU}, we obtain
\begin{equation}
|\mathcal{E}_{3}(u_{j},\vec A_{j})
- \big\langle -i\vec{\nabla}u_{j}, (\hat\chi*\vec{A}_j ) \, u_{\infty}\big\rangle 
+ \big\langle  (\hat\chi*\vec{A}_j)  \, u_{j}, \, -i\vec{\nabla} u_{\infty}\big\rangle 
- \mathcal{E}_{3}(u_{\infty},\vec A_{j})|
\lesssim \|u_{j}-u_{\infty}\|^{\frac12}_{L^{2}}. \label{eq:E2tilde-CV-1}
\end{equation}
We claim that the weak convergence of $\vec A_j$ towards $\vec A_\infty$ then yields
\begin{equation}
\mathcal{E}_{3}(u_{\infty},\vec A_{j})\xrightarrow[j\to\infty]{}\mathcal{E}_{3}(u_{\infty},\vec A_{\infty}) \, . \label{eq:E2tilde-CV-3}
\end{equation}
The limit \eqref{eq:E2tilde-CV-3} can be proven as follows. 
 Let $\varphi  = \overline{-i\vec \nabla u_\infty }$. Then
\begin{equation}\label{eq:E3uInftyAJ}
\mathcal{E}_{3}( u_\infty ,\vec A_{j})
= \int \left(\hat \chi * (u_\infty \varphi )\right) \, \vec A_j 
\end{equation}
and it thus suffices to verify that
$\hat \chi * (u_\infty \varphi )$ belongs to $\dot{H}^{-1}$. This is a consequence of Lemma~\ref{lem:estimateChikFu1u2}:
\begin{equation}
\big\|\hat \chi  * (u_\infty \varphi ) \big\|_{\dot{H}^{-1}} 
\lesssim \Big\| \frac{\chi}{|k|} \Big\|_{L^2+L^{3,\infty}} \|u_\infty\|_{H^1}\| \varphi\|_{L^2}
\lesssim \Big\| \frac{\chi}{|k|} \Big\|_{L^2+L^{3,\infty}} \|u_\infty\|_{H^1}^2<\infty\,.
\end{equation}
The bound (\ref{eq:E2tilde-CV-1}), and the limit (\ref{eq:E2tilde-CV-3})
yield 
\begin{equation}
\mathcal{E}_{3}(u_{j},\vec A_{j})\xrightarrow[j\to\infty]{}\mathcal{E}_{3}(u_{\infty}, \vec A_{\infty})\,. \label{eq:lim-E-tilde-3}
\end{equation}
Similarly as in \eqref{eq:E2tilde-CV-1}, we have
\begin{equation}
|\mathcal{E}_{4}(u_{j},\vec A_{j})-\big\langle u_{\infty},\vec{A}_j^{2}u_{j}\big\rangle_{L^{2}}+\big\langle u_{\infty},\vec{A}_j^{2}u_{j}\big\rangle_{L^{2}}-\mathcal{E}_{4}(u_{\infty},\vec A_{j})| \lesssim \|u_j-u_\infty\|^{\frac12}_{L^2} \,. \label{eq:lim-E-tilde-4}
\end{equation}
Let us now prove that
\begin{equation}
\liminf_{j\to\infty}\mathcal{E}_{4}(u_{\infty},\vec A_{j})\geq \mathcal{E}_{4}(u_{\infty},\vec A_{\infty})\,. \label{eq:E4UInftyAJ}
\end{equation}
Using the same arguments as those used to prove the lower semicontinuity of norms, we can write
\begin{equation}
\mathcal{E}_{4}(u_{\infty},\vec A_{j}-\vec A_\infty) = \mathcal{E}_{4}(u_{\infty},\vec A_{j}) +\mathcal{E}_{4}(u_{\infty},\vec A_\infty) -2\Real\left\langle (\hat \chi * \vec A_\infty ) u_\infty,  (\hat \chi * \vec A_j ) u_\infty  \right\rangle \geq 0 \label{eq:E4Positivity}
\end{equation}
and arguing as in \eqref{eq:E3uInftyAJ}, with $\varphi = \overline{(\hat \chi * \vec A_\infty ) u_\infty} $ (which belongs to $L^2$ by Lemma~\ref{lem:estimateChiAU}), we deduce that
\begin{equation}
\left\langle (\hat \chi * \vec A_\infty ) u_\infty,  (\hat \chi * \vec A_j ) u_\infty \right\rangle
=\int \left( \hat \chi * \left( u_\infty \varphi \right)\right)  \vec A_j  \xrightarrow[j\to\infty]{} \mathcal{E}_{4}(u_{\infty}, \vec A_\infty) \,. \label{eq:E4WeakConvergence}
\end{equation}
Now \eqref{eq:E4Positivity}-\eqref{eq:E4WeakConvergence} imply \eqref{eq:E4UInftyAJ}.
A convenient expression of the last term,
\begin{equation*}
\mathcal{E}_{5}(u,\vec A) 
= \big\langle u,\vec{\sigma}\cdot (\hat\chi * \vec{\nabla} \wedge \vec{A}) u \big\rangle
= -2\Real  \big\langle  \vec{\nabla} \wedge \vec{\sigma}u\,, (\hat\chi * \vec{A}) u \big\rangle,
\end{equation*}
shows that it can be handled as $\mathcal{E}_3$ and
\begin{equation*}
\mathcal{E}_{5}(u_{j},\vec A_{j})\xrightarrow[j\to\infty]{}\mathcal{E}_{5}(u_{\infty}, \vec A_{\infty} )\,.
\end{equation*}
Finally, \eqref{eq:lim-E-tilde-1}, \eqref{eq:lim-E-tilde-2},
\eqref{eq:lim-E-tilde-3}, \eqref{eq:lim-E-tilde-4}, \eqref{eq:E4UInftyAJ} and \eqref{eq:lim-E-tilde-3} imply that
\begin{equation*}
 \liminf_{j\to\infty}\mathcal{E}(u_{j},\vec A_{j})\geq\mathcal{E}(u_{\infty},\vec A_{\infty})
 \end{equation*}
and hence the infimum is indeed a minimum, since $(u_{\infty},\vec A_{\infty})$
is a minimizer.
\end{proof}
To conclude this subsection, we focus on the condition $E_{V_1}>E_V$ which was a crucial assumption in our proof of the existence of a minimizer in Theorem~\ref{thm:Pauli-Fierz}. As mentioned in Remark \ref{rk:cond_gap}, the next proposition shows that this condition is satisfied provided that~$|g| \,\|\chi_2/|k|\|_{L^{3,\infty}}$ is not too large and that either $V$ is confining (recall from Lemma~\ref{lm:confining_intro} that in this case $\mu_{V_1}$ can be chosen arbitrarily large) or $\mu_{V_1}>\mu_V$ and $|g| \,\|\chi/|k|\|_{L^2+L^{3,\infty}}$ is small enough.

\begin{proposition}[Existence of a gap]
\label{lem:delta-E-geq-delta-e}
Suppose that $V$ satisfies Hypothesis~\ref{condVPauliFierz} and that the cut-off function~$\chi=\chi_1+\chi_2$ satisfies Hypothesis \ref{condChi} with $\chi_1/|k|$ in~$L^2$ and $\chi_2/|k|$ in~$L^{3,\infty}$.
If $\mu_V\geq 0$, then there exists a positive constant~$C_V$ such that, for all~$0<\beta<1-C_V g \left\| \chi_2/|k| \right\|_{L^{3,\infty}}$,
\[
E_{V_{1}}-E_V\geq \min\Big(1,\Big(1-\beta-C_Vg\left\|\frac{\chi_2}{|k|}\right\|_{L^{3,\infty}}\Big)\mu_{V_1}- \mu_V-\frac{C_V^2 g^2}{4\beta}\left\|\frac{\chi_1}{|k|}\right\|_{L^2}^2 -C_Vg\left\|\frac{\chi_2}{|k|}\right\|_{L^{3,\infty}} \Big).
\]
\end{proposition}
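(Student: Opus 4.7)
The plan is to bound $E_V$ from above by $\mu_V$ via a simple trial-state argument, derive a lower bound for $E_{V_1}$ using an energy decomposition with careful handling of the critical $\chi_2$ contribution, and then combine the two.

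For the upper bound, I would test the functional $\mathcal{E}_V$ on the product state $(\phi_\varepsilon\otimes e_1,0)\in\mathcal{U}\times\mathcal{A}$, where $\phi_\varepsilon\in\mathcal{Q}(H_V)$ is normalized and satisfies $\langle\phi_\varepsilon,H_V\phi_\varepsilon\rangle\le\mu_V+\varepsilon$. Since $\mathcal{E}_V(\phi_\varepsilon\otimes e_1,0)=\langle\phi_\varepsilon,H_V\phi_\varepsilon\rangle$, passing $\varepsilon\to 0$ yields $E_V\le\mu_V$.

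For the lower bound on $E_{V_1}$, fix $(u,\vec A)\in\mathcal{U}\times\mathcal{A}$ and expand, with $\vec B=\vec\nabla\wedge\vec A$,
\[
\mathcal{E}_{V_1}(u,\vec A)=\|\vec\nabla u\|^2_{L^2}+\langle u,V_1 u\rangle+\frac{\|\vec A\|^2_{\dot H^1}}{32\pi^3}+g^2\|(\hat\chi*\vec A)u\|^2_{L^2}-2g\,\Real\langle\vec\nabla u,(\hat\chi*\vec A)u\rangle-g\langle u,\vec\sigma\cdot(\hat\chi*\vec B)u\rangle.
\]
Since $V_1\ge 0$, the identity $\|\vec\nabla u\|^2+\langle u,V_1 u\rangle\ge(1-\beta)\mu_{V_1}+\beta\|\vec\nabla u\|^2$ reserves the quantity $\beta\|\vec\nabla u\|^2$ to absorb cross terms. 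Splitting $\chi=\chi_1+\chi_2$, Lemma~\ref{lem:estimateChiAU} gives $\|(\hat\chi_1*\vec A)u\|_{L^2}\le C\|\chi_1/|k|\|_{L^2}\|\vec A\|_{\dot H^1}$ and $\|(\hat\chi_2*\vec A)u\|_{L^2}\le C\|\chi_2/|k|\|_{L^{3,\infty}}\|\vec A\|_{\dot H^1}\|u\|_{\dot H^{1/2}}$. The $\chi_1$ contributions to the gradient cross term and to the Zeeman term are each bounded by $Cg\|\vec\nabla u\|\|\chi_1/|k|\|_{L^2}\|\vec A\|_{\dot H^1}$, and Young's inequality $ab\le\beta a^2+b^2/(4\beta)$ absorbs these into the reserve $\beta\|\vec\nabla u\|^2$, leaving a penalty $\tfrac{C_V^2g^2}{4\beta}\|\chi_1/|k|\|^2_{L^2}\|\vec A\|^2_{\dot H^1}$; minimizing the remaining quadratic-and-linear expression in $\|\vec A\|_{\dot H^1}$ against the positive $\|\vec A\|^2_{\dot H^1}/(32\pi^3)$ term produces the additive $-\tfrac{C_V^2g^2}{4\beta}\|\chi_1/|k|\|^2_{L^2}$ appearing in the conclusion.

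The main obstacle is the critical $\chi_2$ contribution. The only available interpolation $\|u\|^2_{\dot H^{1/2}}\le\|\vec\nabla u\|\,\|u\|_{L^2}=\|\vec\nabla u\|$ produces cross terms of the form $g\|\chi_2/|k|\|_{L^{3,\infty}}\|\vec\nabla u\|^{3/2}\|\vec A\|_{\dot H^1}$, whose fractional power in $\|\vec\nabla u\|$ resists direct Young absorption—this is precisely the borderline scaling at which a purely perturbative small-coupling argument fails. To resolve this, I would exploit the dichotomy encoded in the $\min(1,\,\cdot\,)$: either $\mathcal{E}_{V_1}(u,\vec A)\ge\mu_V+1$, in which case $E_{V_1}-E_V\ge 1$ trivially for this $(u,\vec A)$, or $\mathcal{E}_{V_1}(u,\vec A)\le\mu_V+1$, in which case $\langle u,H_{V_1}u\rangle$ and hence $\|u\|_{\dot H^{1/2}}^2$ are bounded by constants depending only on $\mu_V$. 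In the second regime, the operator inequality $\sqrt{-\Delta}\le\eta(-\Delta)+1/(4\eta)$ together with the $\chi_2$ estimate allow the $\chi_2$ cross term to be absorbed as the multiplicative correction $-C_Vg\|\chi_2/|k|\|_{L^{3,\infty}}\mu_{V_1}$ on $(1-\beta)\mu_{V_1}$, plus the additive error $-C_Vg\|\chi_2/|k|\|_{L^{3,\infty}}$ coming from the $1/(4\eta)$ piece. Taking the infimum over $(u,\vec A)\in\mathcal{U}\times\mathcal{A}$ and subtracting $E_V\le\mu_V$ yields the stated inequality.
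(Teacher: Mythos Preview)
Your overall strategy --- bound $E_V\le\mu_V$ by testing with $\vec{A}=0$, then lower-bound $E_{V_1}$ via a dichotomy to tame the critical $\chi_2$ term --- matches the paper's. But two steps are off.

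First, in your dichotomy you claim that $\mathcal{E}_{V_1}(u,\vec{A})\le\mu_V+1$ forces $\langle u,H_{V_1}u\rangle$ (and hence $\|u\|_{\dot H^{1/2}}^2$) to be bounded by constants depending only on $\mu_V$. This does not follow: writing $\mathcal{E}_{V_1}=\|\vec{\sigma}\cdot(-i\vec{\nabla}-g\hat\chi*\vec{A})u\|^2+\langle u,V_1u\rangle+\tfrac{1}{32\pi^3}\|\vec{A}\|^2_{\dot H^1}$, each non-negative summand is indeed $\le\mu_V+1$, but the bounded kinetic piece is the \emph{magnetic} one, not $\|\vec{\nabla}u\|^2$; passing from one to the other re-introduces $g\|(\hat\chi*\vec{A})u\|$ and hence coupling-dependent constants. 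What your dichotomy \emph{does} yield cleanly is $\|\vec{A}\|^2_{\dot H^1}\le 32\pi^3(\mu_V+1)$, and this is exactly the variable the paper's dichotomy is based on. With $\|\vec{A}\|_{\dot H^1}$ so bounded, the $\chi_2$ cross and Zeeman contributions are at most $C_Vg\|\chi_2/|k|\|_{L^{3,\infty}}\|u\|_{\dot H^1}\|u\|_{\dot H^{1/2}}\le C_Vg\|\chi_2/|k|\|_{L^{3,\infty}}\|u\|_{H^1}^2$ with $C_V=2C\sqrt{\mu_V+1}$; since $\|u\|_{H^1}^2\le\langle u,H_{V_1}u\rangle+1$, this absorbs directly as the multiplicative loss on $\mu_{V_1}$ plus the additive $-C_Vg\|\chi_2/|k|\|_{L^{3,\infty}}$, with no need for the operator inequality $\sqrt{-\Delta}\le\eta(-\Delta)+1/(4\eta)$.

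Second, your handling of the $\chi_1$ term also depends on this. After Young you carry a penalty $-\tfrac{C^2g^2}{4\beta}\|\chi_1/|k|\|^2_{L^2}\|\vec{A}\|^2_{\dot H^1}$ and propose to play it against $\tfrac{1}{32\pi^3}\|\vec{A}\|^2_{\dot H^1}$. But both terms are quadratic in $\|\vec{A}\|_{\dot H^1}$, so either the net coefficient is non-negative (an unstated smallness condition on $g\|\chi_1/|k|\|_{L^2}$) or the infimum in $\|\vec{A}\|$ is $-\infty$; there is no ``quadratic-and-linear'' minimization here. The paper instead uses the bound on $\|\vec{A}\|_{\dot H^1}$ to make the $\chi_1$ cross term $\le C_Vg\|\chi_1/|k|\|_{L^2}\|u\|_{\dot H^1}$ \emph{independent} of $\vec{A}$; Young then gives $\beta\|u\|_{\dot H^1}^2+\tfrac{C_V^2g^2}{4\beta}\|\chi_1/|k|\|^2_{L^2}$, the first term absorbed into $\langle u,H_{V_1}u\rangle$ and the second being exactly the additive error in the statement.
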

\begin{proof}
Restricting the infimum of $\mathcal{E}_V$ to $\mathcal U \times \{\vec{0}\}$ yields an upper bound for $E_V$:
\begin{equation}\label{eq:upper-bound-EV-1}
E_V
= \inf_{(u,\vec{A}) \in\mathcal{U}\times \mathcal{A}} \mathcal{E}_V(u,\vec{A})
\leq \inf_{u \in\mathcal{U}} \mathcal{E}_V(u,\vec{0})
= \mu_V \,.
\end{equation}
To control $E_{V_1}$ from below, recall that
\[
\mathcal{E}_{V_1}(u,\vec{A}) =
\| \vec{\sigma}\cdot (-i\vec{\nabla} - g\hat\chi * \vec{A}) \, u \|^2_{L^{2}} + \langle u, V_1 u\rangle
+\frac{1}{32\pi^3}\| \vec A \|^2_{\dot{H}^1}.
\]
If $\| \vec A \|^2_{\dot{H}^1} / (32\pi^3)\ge \mu_V+1$, then
$$
\mathcal{E}_{V_1}(u,\vec{A}) \ge \mu_V+1.
$$
Suppose now that $\| \vec A \|^2_{\dot{H}^1} / (32\pi^3) \le \mu_V+1$. Then, with $C>0$ the universal constant from Lemma~\ref{lem:estimateChiAU}, and $C_V = 2C\sqrt{\mu_V + 1}$,
\begin{align*}
\mathcal{E}_{V_{1}}(u,\vec A)
&\ge\langle u,(-\Delta_{x}+V_{1})u\rangle
 -2g\big|\big\langle  \vec{\sigma} \cdot \vec{\nabla}u\,, \vec{\sigma}\cdot(\hat\chi * \vec{A}) u \big\rangle\big|\\ \allowdisplaybreaks
&\ge\langle u,(-\Delta_{x}+V_{1})u\rangle
- 2gC \|u\|_{\dot{H}^{1}} \|\vec{A}\|_{\dot{H}^{1}} \left(\left\|\frac{\chi_1}{|k|}\right\|_{L^2}+\left\|\frac{\chi_2}{|k|}\right\|_{L^{3,\infty}}\|u\|_{\dot{H}^{1/2}} \right)\\ \allowdisplaybreaks
&\ge\langle u,(-\Delta_{x}+V_{1})u\rangle
- gC_V \left(\left\|\frac{\chi_1}{|k|}\right\|_{L^2}\|u\|_{\dot{H}^{1}}+\left\|\frac{\chi_2}{|k|}\right\|_{L^{3,\infty}}\|u\|_{H^1}^{2} \right) \,.
\end{align*}
Now, thanks to the conditions on $\beta$,
\begin{align*}
\mathcal{E}_{V_{1}}(u,\vec A)
&\ge\Big(1-\beta - gC_V \left\|\frac{\chi_2}{|k|}\right\|_{L^{3,\infty}}\Big)\langle u,(-\Delta_{x}+V_{1})u\rangle 
-\frac{C_V^2 g^2}{4\beta}\left\|\frac{\chi_1}{|k|}\right\|_{L^2}^2 -  gC_V\left\|\frac{\chi_2}{|k|}\right\|_{L^{3,\infty}} \\ \allowdisplaybreaks
&\ge\Big(1-\beta - gC_V \left\|\frac{\chi_2}{|k|}\right\|_{L^{3,\infty}}\Big)\mu_{V_1} 
-\frac{C_V^2 g^2}{4\beta}\left\|\frac{\chi_1}{|k|}\right\|_{L^2}^2 -  gC_V\left\|\frac{\chi_2}{|k|}\right\|_{L^{3,\infty}} \,.
\end{align*}
Therefore
\[
E_{V_{1}}\geq \min\Big(\mu_V+1,\Big(1-\beta-C_Vg\left\|\frac{\chi_2}{|k|}\right\|_{L^{3,\infty}}\Big)\mu_{V_1}- \frac{C_V^2 g^2}{4\beta}\left\|\frac{\chi_1}{|k|}\right\|_{L^2}^2 -C_Vg\left\|\frac{\chi_2}{|k|}\right\|_{L^{3,\infty}} \Big),
\]
which together with (\ref{eq:upper-bound-EV-1}) 
yields the result.
\end{proof}

\subsection{Properties of the set of minimizers}\label{sec:PF-uniqueness}

In this section we prove some estimates on minimizers $(u_{\mathrm{gs}},\vec{A}_{\mathrm{gs}})$ of the Maxwell--Schr\"odinger energy functional, which in turn implies that~$\vec{A}_{\mathrm{gs}}$ is fully determined by~$u_{\mathrm{gs}}$ for small~$g$.

We use the following notations. The resolvent of the operator $H_V\otimes \mathbf{I}_{\mathbb{C}^2}$ is denoted by~$R_\lambda:=(H_V-\lambda)^{-1}  \otimes \mathbf{I}_{\mathbb{C}^2}$ (a priori defined as an unbounded operator on the set~$\mathrm{Ran}(\mathds{1}_{\{\lambda\}}(H_V\otimes \mathbf{I}_{\mathbb{C}^2}))^{\perp}$). Recall that $\Pi_V$, $\Pi_V^\perp$ are the projections in $L^2(\mathbb{R}^3;\mathbb{C}^2)$ defined by $\Pi_V:=\ket{u_V}\bra{u_V} \otimes \mathbf{I}_{\mathbb{C}^2}$, $\Proj = \mathbf{I}_{L^2}-\Pi_V$. Moreover, for all~$u$ in~$\mathcal{Q}_V$, we set~$ \varphi = \Pi_V^\perp u$.

In the next lemma,  under hypotheses which implies the existence of a ground state~$u_V$ for~$H_V$, we obtain an equation satisfied by~$\varphi$ at a minimizer. Moreover, using the Euler-Lagrange equation for~$\vec{A}_{\mathrm{gs}}$, we obtain a control over~$u_\mathrm{gs}$ and $\vec{A}_\mathrm{gs}$.
\begin{lemma}\label{lm:lagrange-PF}
Suppose that $V$ satisfies Hypotheses~\ref{condVPauliFierz} and \ref{condGS} and that $\chi$ satisfies Hypothesis~\ref{condChi}.
Let $(u_\mathrm{gs}, \vec{A}_\mathrm{gs})$ in $\mathcal{U\times A}$ be a global minimizer of $\mathcal{E}$. 
Then
\begin{multline}
\varphi_\mathrm{gs} = R_{E_V}\Proj \Big[2(-i\vec{\nabla} u_\mathrm{gs})\cdot (g\hat{\chi}*\vec{A}_\mathrm{gs})-g\hat{\chi}*\vec{\sigma} \cdot (\vec{\nabla}\wedge\vec{A}_{\mathrm{gs}})u_{\mathrm{gs}} \\
-(g\hat{\chi}*\vec{A}_\mathrm{gs})^2u_\mathrm{gs} - \frac{1}{32\pi^3}\|\vec{A}_\mathrm{gs}\|_{\dot{H}^1}^2 u_\mathrm{gs} \Big] \,.\label{eq:PF-phi}
\end{multline}
Moreover, there exist $\varepsilon_V>0$ and $C_V>0$ such that, if
\begin{equation}\label{eq:small_chi}
\mathbf{g}_\chi:=|g|\Big\|{\frac{\chi}{\abs{k}}}\Big\|_{L^2+L^{3,\infty}}\le\varepsilon_V,
\end{equation}
then the following estimates hold
\begin{align}
& \norm{\varphi_{\mathrm{gs}}}_{\mathcal{Q}_V}  \le C_V\mathbf{g}_\chi^2 \,,\label{eqn:estimate_phi_gs}\\
& \big\|\vec{A}_{\mathrm{gs}}\big\|_{\dot{H}^1}\leq C_V\mathbf{g}_\chi\,, \label{eqn:estimate_A_gs_order_1}\\
& \big\|\vec{A}_{\mathrm{gs}} - \vec{A}_{\mathrm{gs}}^{[1]}  \big\|_{\dot{H}^1} = \big\|\vec{\nabla} \wedge \vec{A}_{\mathrm{gs}}- \vec{\nabla} \wedge \vec{A}_{\mathrm{gs}}^{[1]}\big\|_{L^2}\leq C_V\mathbf{g}_\chi^3\,, \label{eqn:expansion_A_gs_order_3}
\end{align}
with
\begin{equation}\label{eqn:A[1]}
\vec{A}_{\mathrm{gs}}^{[1]} :=  16\pi^3 (-\Delta)^{-1} g\hat{\chi} * \vec{\nabla}  \wedge u_V^2\vec{\omega}_{\mathrm{gs}}\,,
\end{equation}
and where the vector $\vec{\omega}_{\mathrm{gs}}$ in $\mathbb{C}^3$ is defined by the relation 
\begin{equation}\label{eq:defomega}
 u_V^2 \vec{\omega}_{\mathrm{gs}}=\left\langle \Pi_Vu_{\mathrm{gs}}, \vec{\sigma}\, \Pi_Vu_{\mathrm{gs}} \right\rangle_{\mathbb{C}^2}\,.
 \end{equation}
\end{lemma}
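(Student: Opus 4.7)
The plan is to derive~\eqref{eq:PF-phi} directly from the eigenvalue equation $H_{V,\vec{A}_\mathrm{gs}} u_\mathrm{gs} = E_V u_\mathrm{gs}$ supplied by Lemma~\ref{lem:H_VAgs}. Using the Coulomb gauge $\vec{\nabla}\cdot\vec{A}_\mathrm{gs}=0$, I will expand the magnetic Laplacian as $(-i\vec{\nabla}-g\hat{\chi}*\vec{A}_\mathrm{gs})^2 = -\Delta - 2(g\hat{\chi}*\vec{A}_\mathrm{gs})\cdot(-i\vec{\nabla}) + (g\hat{\chi}*\vec{A}_\mathrm{gs})^2$ to split off $H_V=-\Delta+V$, rewriting the equation as $(H_V-E_V)u_\mathrm{gs} = -P u_\mathrm{gs}$, where $P$ gathers the four interaction terms. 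Projecting with $\Proj$, which commutes with $H_V\otimes\mathbf{I}_{\mathbb{C}^2}$, yields $(H_V-E_V)\varphi_\mathrm{gs} = -\Proj P u_\mathrm{gs}$. Since $E_V\le \mu_V$ (testing $\mathcal{E}_V$ against $u_V$ tensored with a unit spinor and $\vec{A}=\vec{0}$) and $\mu_V$ is isolated in $\sigma(H_V)$ by Hypothesis~\ref{condGS}, the resolvent $R_{E_V}$ is well-defined on $\mathrm{Ran}(\Proj)$ with positive distance to the spectrum, and inverting yields~\eqref{eq:PF-phi}.

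For the bound~\eqref{eqn:estimate_A_gs_order_1}, I will exploit the Euler--Lagrange equation~\eqref{eq:PF-Lagrange-A} in the form $\|\vec{A}_\mathrm{gs}\|_{\dot{H}^1} = 32\pi^3\,\|g\hat{\chi}*w\|_{\dot{H}^{-1}}$, with $w=\Real\langle(-i\vec{\nabla}+\vec{\nabla}\wedge\vec{\sigma}-g\hat{\chi}*\vec{A}_\mathrm{gs})u_\mathrm{gs},u_\mathrm{gs}\rangle_{\mathbb{C}^2}$. Lemma~\ref{lem:estimateChikFu1u2} bounds the right-hand side by $C\mathbf{g}_\chi\|\FF w\|_{L^\infty\cap L^{6,2}}$, and each summand of $w$ can be written as a product of an $L^2$ function by an $H^1$ function so that~\eqref{eq:28} applies, the contribution $(g\hat{\chi}*\vec{A}_\mathrm{gs})u_\mathrm{gs}$ being in $L^2$ thanks to Lemma~\ref{lem:estimateChiAU}. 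A priori boundedness of $\|u_\mathrm{gs}\|_{\mathcal{Q}_V}$ will follow from $\mathcal{E}_V(u_\mathrm{gs},\vec{A}_\mathrm{gs})\le\mu_V$ and the coercivity Lemma~\ref{lem:coercivity-pauli-fierz}. This produces an estimate of the form $\|\vec{A}_\mathrm{gs}\|_{\dot{H}^1}\lesssim \mathbf{g}_\chi + \mathbf{g}_\chi^2 \|\vec{A}_\mathrm{gs}\|_{\dot{H}^1}$; absorbing the self-interaction for $\mathbf{g}_\chi$ small then gives~\eqref{eqn:estimate_A_gs_order_1}. The bound~\eqref{eqn:estimate_phi_gs} follows by taking the $\mathcal{Q}_V$-norm in~\eqref{eq:PF-phi}: the operator $R_{E_V}\Proj$ extends to a bounded map $\mathcal{Q}_V^*\to\mathcal{Q}_V$ whose norm is uniformly controlled by the spectral gap of $H_V$ at $\mu_V$, and Lemma~\ref{lm:new} bounds each of the four bracketed terms by $\mathbf{g}_\chi\|\vec{A}_\mathrm{gs}\|_{\dot{H}^1}\lesssim\mathbf{g}_\chi^2$.

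Finally, for the third-order expansion~\eqref{eqn:expansion_A_gs_order_3}, I will decompose $u_\mathrm{gs}=\Pi_V u_\mathrm{gs}+\varphi_\mathrm{gs}$ on the right-hand side of~\eqref{eq:PF-Lagrange-A}. Two cancellations should single out $\vec{A}_\mathrm{gs}^{[1]}$ as the leading contribution: first, $\Real\langle(-i\vec{\nabla})\Pi_V u_\mathrm{gs},\Pi_V u_\mathrm{gs}\rangle_{\mathbb{C}^2}=0$ because $u_V$ is real; second, $\Real\langle(\vec{\nabla}\wedge\vec{\sigma})\Pi_V u_\mathrm{gs},\Pi_V u_\mathrm{gs}\rangle_{\mathbb{C}^2} = \tfrac{1}{2}\vec{\nabla}\wedge(u_V^2\vec{\omega}_\mathrm{gs})$, the factor $\tfrac{1}{2}$ arising from $u_V\vec{\nabla}u_V=\tfrac{1}{2}\vec{\nabla}u_V^2$, which turns the prefactor $32\pi^3$ of~\eqref{eq:PF-Lagrange-A} into the $16\pi^3$ appearing in~\eqref{eqn:A[1]}. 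All remaining pieces carry an extra factor $\varphi_\mathrm{gs}$ (of order $\mathbf{g}_\chi^2$ by~\eqref{eqn:estimate_phi_gs}) or $g\hat{\chi}*\vec{A}_\mathrm{gs}$ (of order $\mathbf{g}_\chi$ by~\eqref{eqn:estimate_A_gs_order_1}); combined with the overall $g\hat{\chi}$ of~\eqref{eq:PF-Lagrange-A} and Lemma~\ref{lem:estimateChikFu1u2}, each contributes $\mathcal{O}(\mathbf{g}_\chi^3)$ in $\dot{H}^1$. The main technical obstacle throughout is the absence of an ultraviolet cutoff, which forces the systematic use of the Lorentz-space inequalities of Lemma~\ref{lem:estimateChikFu1u2} in place of standard $L^p$-bounds, together with the fixed-point-type absorption argument handling the self-interaction through $g\hat{\chi}*\vec{A}_\mathrm{gs}$ in the Euler--Lagrange equation.
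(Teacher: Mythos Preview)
Your proposal is correct and follows essentially the same route as the paper's proof: deriving \eqref{eq:PF-phi} from the eigenvalue equation of Lemma~\ref{lem:H_VAgs} via $R_{E_V}\Proj$, then bounding $\|\vec{A}_{\mathrm{gs}}\|_{\dot{H}^1}$ from the Euler--Lagrange equation \eqref{eq:PF-Lagrange-A} using Lemmata~\ref{lem:estimateChiAU}--\ref{lem:estimateChikFu1u2} and coercivity, feeding this back into \eqref{eq:PF-phi} to get \eqref{eqn:estimate_phi_gs}, and finally decomposing $u_{\mathrm{gs}}=\Pi_V u_{\mathrm{gs}}+\varphi_{\mathrm{gs}}$ in \eqref{eq:PF-Lagrange-A} to isolate $\vec{A}_{\mathrm{gs}}^{[1]}$. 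The only cosmetic differences are that the paper uses the coercivity bound directly on $\|\vec{A}_{\mathrm{gs}}\|_{\dot{H}^1}$ (rather than your absorption argument, though both work) and cites Lemma~\ref{lem:estimateChiAU} with the embedding $L^2\subset\mathcal{Q}_V^*$ where you invoke Lemma~\ref{lm:new}; your citation is in fact the more tailored one.
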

\begin{remark}\label{rk:omega}
Note that $\big| |\vec{\omega}_{\mathrm{gs}}|^2-1\big| \leq C_V \mathbf{g}_\chi^4$ by \eqref{eqn:estimate_phi_gs}.
\end{remark}
\begin{proof}[Proof of Lemma~\ref{lm:lagrange-PF}]
To prove \eqref{eq:PF-phi}, it suffices to observe that $E_V\leq \mu_V=\inf \sigma(H_V)$ (see~\eqref{eq:upper-bound-EV-1}), and hence that $R_{E_V}\Pi_V^\perp$ is well-defined and identifies with an element of $\LL(\mathcal{Q}_V^*,\mathcal{Q}_V)$. Applying $R_{E_V}\Proj$ to \eqref{eq:PF-HVA-uc} then yields \eqref{eq:PF-phi}.

Now we prove \eqref{eqn:estimate_A_gs_order_1}. First observe that, by Lemma \ref{lem:coercivity-pauli-fierz}, the assumption \eqref{eq:small_chi} and the fact that $E_V\le\mu_V$, we have, for~$\varepsilon_V$ sufficiently small,
\begin{equation*}
\big\|(u_{\mathrm{gs}},\vec{A}_{\mathrm{gs}})\big\|_{\mathcal{U}\times\mathcal{A}}\le C_V,
\end{equation*}
for some constant $C_V>0$, uniformly in $g$ and $\chi$ such that $\mathbf{g}_\chi\leq \varepsilon_V$. Using \eqref{eq:PF-Lagrange-A} and Lemmata~\ref{lem:estimateChiAU}--\ref{lem:estimateChikFu1u2}, we then estimate
\begin{align}
\big\|\vec{A}_{\mathrm{gs}}\big\|_{\dot{H}^1} &\le C \big\| \, g\hat{\chi}*\Real \langle -i\vec{\nabla} u_\mathrm{gs} + \vec{\nabla}\wedge\vec{\sigma}u_{\mathrm{gs}},u_{\mathrm{gs}}\rangle_{\mathbb{C}^2}\big\|_{\dot{H}^{-1}}
+ C \big\| \, g\hat{\chi}*[(g\hat{\chi}*\vec{A}_{\mathrm{gs}})\abs{u_{\mathrm{gs}}}^2_{\mathbb{C}^2}] \big\|_{\dot{H}^{-1}} \notag\\
&\le C |g|\norm{\frac{\chi}{\abs{k}}}_{L^2+L^{3,\infty}}\norm{u_{\mathrm{gs}}}_{H^1}
 +Cg^2\norm{\frac{\chi}{\abs{k}}}_{L^2+L^{3,\infty}}^2 \big\|\vec{A}_{\mathrm{gs}}\big\|_{\dot{H}^1} \norm{u_{\mathrm{gs}}}_{H^1} \notag \\
 & \leq 2C R_V \mathbf{g}_\chi \, .
 \label{eq:estim_normA}
\end{align}
Next using \eqref{eq:PF-phi}, the fact that $R_{E_V}\Proj\in\mathcal{L}(\mathcal{Q}_V^*,\mathcal{Q}_V)$, the continuous embedding $L^2\subset\mathcal{Q}_V^*$, \eqref{eq:estim_normA} and Lemma \ref{lem:estimateChiAU}, we  obtain
\begin{align*}
\norm{\varphi_{\mathrm{gs}}}_{\mathcal{Q}_V} \le C_V\mathbf{g}_\chi^2\,, 
\end{align*}
for some constant~$C_V>0$.

Let us now prove \eqref{eqn:expansion_A_gs_order_3} starting from the formula given in \eqref{eq:PF-Lagrange-A} for~$\vec{A}_{\mathrm{gs}}$. Note that the constant $C_V$ might change from one line to the other.
Applying Lemma~\ref{lem:estimateChikFu1u2} first and then Lemma~\ref{lem:estimateChiAU}, the boundedness of $u_{\mathrm{gs}}$ in $\mathcal{Q}_V$ and \eqref{eqn:estimate_A_gs_order_1},
\begin{multline}\label{eqn:estimate_A_gs_term_chiA}
\|(-\Delta)^{-1}g{\hat{\chi}}*\Real
\big\langle ( g\hat{\chi}*\vec{A}_\mathrm{gs}) u_\mathrm{gs},u_{\mathrm{gs}}\big\rangle_{\mathbb{C}^2}\|_{\dot{H}^1}
\leq C_V \mathbf{g}_\chi \| ( g\hat{\chi}*\vec{A}_\mathrm{gs}) u_\mathrm{gs} \|_{L^2}  \|u_{\mathrm{gs}}\|_{H^1}\\
\leq C_V \mathbf{g}_\chi^2 \| \vec{A}_\mathrm{gs}\|_{\dot{H}^1} \|u_\mathrm{gs} \|_{H^1}^2
\leq C_V \mathbf{g}_\chi^3 \,.
\end{multline}
By Lemma~\ref{lem:estimateChiAU}, the boundedness of $u_{\mathrm{gs}}$ in $\mathcal{Q}_V$ and \eqref{eqn:estimate_phi_gs}, 
\begin{multline}\label{eqn:estimate_A_gs_term_inabla}
\|(-\Delta)^{-1}g{\hat{\chi}}*\Real
\big\langle -i\vec{\nabla}  u_\mathrm{gs},u_{\mathrm{gs}}\big\rangle_{\mathbb{C}^2}\|_{\dot{H}^1} \\
\leq 
\|g{\hat{\chi}}*\Real
\big\langle i\vec{\nabla}  \Pi_V u_\mathrm{gs},\Pi_V u_{\mathrm{gs}}\big\rangle_{\mathbb{C}^2}\|_{\dot{H}^{-1}}
+2\|g{\hat{\chi}}*
\big\langle \vec{\nabla}  \Pi_V u_\mathrm{gs},\varphi_{\mathrm{gs}}\big\rangle_{\mathbb{C}^2}\|_{\dot{H}^{-1}} 
+\|g{\hat{\chi}}*
\big\langle \vec{\nabla}  \varphi_\mathrm{gs},\varphi_{\mathrm{gs}}\big\rangle_{\mathbb{C}^2}\|_{\dot{H}^{-1}} \\
\leq 
0+C_V \mathbf{g}_\chi \| \vec{\nabla}  \Pi_V u_\mathrm{gs}\|_{L^2}  \| \varphi_{\mathrm{gs}}\|_{\dot{H}^{1}}
+C_V \mathbf{g}_\chi \| \vec{\nabla}  \varphi_{\mathrm{gs}}\|_{L^2}  \| \varphi_{\mathrm{gs}}\|_{\dot{H}^{1}} 
\leq 
C_V \mathbf{g}_\chi^3  \,.
\end{multline}
Applying Lemma~\ref{lem:estimateChikFu1u2} first,  then the boundedness of $u_{\mathrm{gs}}$ in $\mathcal{Q}_V$ and \eqref{eqn:estimate_phi_gs}, we obtain
\begin{multline}\label{eqn:estimate_A_gs_term_nablawedge}
\|(-\Delta)^{-1}g{\hat{\chi}}*\Real
\big\langle \vec{\nabla}\wedge\vec{\sigma}\,u_\mathrm{gs},u_{\mathrm{gs}}\big\rangle_{\mathbb{C}^2}
-(-\Delta)^{-1}g{\hat{\chi}}*\Real
\big\langle \vec{\nabla}\wedge\vec{\sigma}\,\Pi_V u_\mathrm{gs}, \Pi_V u_{\mathrm{gs}}\big\rangle_{\mathbb{C}^2}\|_{\dot{H}^1} \\
\leq 2\| g{\hat{\chi}}*
\big\langle \vec{\nabla}\wedge\vec{\sigma}\, \varphi_{\mathrm{gs}} , \Pi_V u_\mathrm{gs}\big\rangle_{\mathbb{C}^2}\|_{\dot{H}^{-1}}  
+  \| g{\hat{\chi}}*
\big\langle \vec{\nabla}\wedge\vec{\sigma}\,\varphi_\mathrm{gs}, \varphi_{\mathrm{gs}}\big\rangle_{\mathbb{C}^2}\|_{\dot{H}^{-1}}  \\
\leq C_V \mathbf{g}_\chi \| \vec{\nabla}\wedge\vec{\sigma}\, \varphi_{\mathrm{gs}}\|_{L^2} ( \|\Pi_V u_\mathrm{gs}\|_{H^{1}}  + \|\varphi_\mathrm{gs}\|_{H^{1}}  ) 
\leq C_V \mathbf{g}_\chi^3 \,.
\end{multline}
Then, from $\vec{\nabla} \wedge \langle u, \vec{\sigma} u\rangle = 2\Real\langle u,\vec{\nabla} \wedge \vec{\sigma}u \rangle$ the equality
\begin{align*}
(-\Delta)^{-1}g{\hat{\chi}}*2\Real
\big\langle \vec{\nabla}\wedge\vec{\sigma}\,\Pi_V u_\mathrm{gs}, \Pi_V u_{\mathrm{gs}}\big\rangle_{\mathbb{C}^2}
=(-\Delta)^{-1} g\hat{\chi} * \vec{\nabla} u_V^2 \wedge \vec{\omega}_{\mathrm{gs}}
\end{align*}
follows, which, along with \eqref{eqn:estimate_A_gs_term_chiA}, \eqref{eqn:estimate_A_gs_term_inabla} and \eqref{eqn:estimate_A_gs_term_nablawedge}, yields \eqref{eqn:expansion_A_gs_order_3}.

Finally, since $\vec{\nabla}\cdot \vec{A}_{\mathrm{gs}}= \vec{\nabla}\cdot \vec{A}^{[1]}_{\mathrm{gs}} = 0$, using the formula $\vec{\nabla} \wedge \vec{\nabla} \wedge \vec{A}= -\Delta\vec{A} - \vec{\nabla} (\, \vec{\nabla}\cdot \vec{A})$,
$$\big\|\vec{\nabla} \wedge (\vec{A}_{\mathrm{gs}}-\vec{A}^{[1]}_{\mathrm{gs}})\big\|_{L^2} 
= \big\|\vec{A}_{\mathrm{gs}}-\vec{A}^{[1]}_{\mathrm{gs}}\big\|_{\dot{H}^1}\,.$$
This concludes the proof of Lemma~\ref{lm:lagrange-PF}.
\end{proof}
Now we can prove that~$\vec{A}_{\mathrm{gs}}$ is fully determined by~$u_{\mathrm{gs}}$ for small~$g$.
\begin{proposition}\label{prop:PF-uniqueness}
Suppose that $V$ satisfies Hypotheses~\ref{condVPauliFierz} and \ref{condGS} and that $\chi$ satisfies Hypothesis~\ref{condChi}. Suppose that the decomposition $V=V_1+V_2$ of Hypothesis~\ref{condVPauliFierz} can be chosen such that $E_{V_1}>E_V$. There exists $\varepsilon_V>0$ such that, if
\begin{equation*}
\mathbf{g}_\chi:=|g|\Big\|\frac{\chi}{|k|}\Big\|_{L^2+L^{3,\infty}}\le\varepsilon_V,
\end{equation*}
 then if $(u_\mathrm{gs} ,\vec{A}_\mathrm{gs})$ and $(u_\mathrm{gs} ,\vec{A}^\prime_\mathrm{gs})$ are minimizers of $\mathcal{E}$, necessarily $\vec{A}_\mathrm{gs}=\vec{A}^\prime_\mathrm{gs}$.
\end{proposition}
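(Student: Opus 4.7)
The plan is to exploit the Euler--Lagrange equation for $\vec{A}$ from Lemma~\ref{lem:H_VAgs}. Since both minimizers share the same electronic part $u_\mathrm{gs}$, writing the equation~\eqref{eq:PF-Lagrange-A} for $\vec{A}_\mathrm{gs}$ and $\vec{A}'_\mathrm{gs}$ and subtracting, all contributions that are affine in $u_\mathrm{gs}$ and independent of $\vec{A}$ will cancel. What remains is the linear contribution from the quadratic term $(\hat\chi * \vec{A})|u_\mathrm{gs}|^2_{\mathbb{C}^2}$. More precisely, setting $\vec{B} := \vec{A}_\mathrm{gs} - \vec{A}'_\mathrm{gs}$, I would obtain
\begin{equation*}
\vec{B} = -32\pi^3\, g^2 (-\Delta)^{-1}\hat{\chi} * \Real\big\langle (\hat\chi*\vec{B})\, u_\mathrm{gs}\,,\, u_\mathrm{gs}\big\rangle_{\mathbb{C}^2}\,.
\end{equation*}

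Next I would estimate $\|\vec{B}\|_{\dot H^1}$. Using $\|(-\Delta)^{-1}f\|_{\dot H^1} = \|f\|_{\dot H^{-1}}$ together with Lemma~\ref{lem:estimateChikFu1u2} applied to $w = \Real\langle (\hat\chi*\vec{B})u_\mathrm{gs}, u_\mathrm{gs}\rangle_{\mathbb{C}^2}$ gives
\begin{equation*}
\|\vec{B}\|_{\dot H^1} \lesssim g^2 \Big\|\frac{\chi}{|k|}\Big\|_{L^2+L^{3,\infty}} \big\|(\hat\chi*\vec{B})u_\mathrm{gs}\big\|_{L^2}\,\|u_\mathrm{gs}\|_{H^1}\,.
\end{equation*}
Then Lemma~\ref{lem:estimateChiAU}, estimate \eqref{eq:26}, bounds $\|(\hat\chi*\vec{B})u_\mathrm{gs}\|_{L^2}$ by $C\|\chi/|k|\|_{L^2+L^{3,\infty}}\|\vec{B}\|_{\dot H^1}\,\|u_\mathrm{gs}\|_{L^2}^{1/2}\|u_\mathrm{gs}\|_{\mathcal{Q}_V}^{1/2}$. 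Since $E_V \leq \mu_V$ (see~\eqref{eq:upper-bound-EV-1}), the coercivity estimate in Lemma~\ref{lem:coercivity-pauli-fierz}, applied to the minimizers $(u_\mathrm{gs},\vec{A}_\mathrm{gs})$ and $(u_\mathrm{gs},\vec{A}'_\mathrm{gs})$, yields a uniform $V$-dependent bound $\|u_\mathrm{gs}\|_{\mathcal{Q}_V}\le C_V$ (this is where the condition $\mathbf{g}_\chi \leq \varepsilon_V$ sufficiently small comes in, to ensure the smallness assumption~\eqref{eq:smallness_condition}). Combining everything gives
\begin{equation*}
\|\vec{B}\|_{\dot H^1} \leq C_V\, \mathbf{g}_\chi^{\,2}\, \|\vec{B}\|_{\dot H^1}\,.
\end{equation*}

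Finally, choosing $\varepsilon_V>0$ small enough so that $C_V\varepsilon_V^2 < 1$, this inequality forces $\|\vec{B}\|_{\dot H^1} = 0$, hence $\vec{A}_\mathrm{gs} = \vec{A}'_\mathrm{gs}$ in $\mathcal{A}$. The main subtlety, rather than an obstacle, is that the subtraction eliminates the genuinely nonlinear pieces of the Euler--Lagrange equation (those not involving $\vec{A}$ at all, and the cross terms between $u_\mathrm{gs}$ and $\vec{A}$ that are linear in $\vec{A}$ appear with factor $g$, not $g^2$, but these also cancel by equality of $u_\mathrm{gs}$); only the $g^2$-term survives, which is exactly what allows contraction for small $\mathbf{g}_\chi$. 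Note that the hypothesis $E_{V_1}>E_V$ is used only implicitly, to guarantee the existence of minimizers via Theorem~\ref{thm:Pauli-Fierz}, while the uniqueness of $\vec{A}$ given $u_\mathrm{gs}$ is a consequence of the smallness of $\mathbf{g}_\chi$ alone.
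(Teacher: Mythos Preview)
Your proof is correct and follows essentially the same approach as the paper: subtract the Euler--Lagrange equations \eqref{eq:PF-Lagrange-A} for the two minimizers, use Lemmata~\ref{lem:estimateChikFu1u2} and~\ref{lem:estimateChiAU} to estimate the resulting $\dot H^1$ norm, and conclude by a contraction argument for small $\mathbf{g}_\chi$. The paper's proof is slightly terser and writes the surviving term directly as $(g\hat\chi*(\vec A-\vec A'))|u|_{\mathbb{C}^2}^2$ rather than $\Real\langle(\hat\chi*\vec B)u_\mathrm{gs},u_\mathrm{gs}\rangle_{\mathbb{C}^2}$, but these are equal since $\hat\chi*\vec B$ is real.
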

\begin{proof}
In this proof we drop the indices $\mathrm{gs}$ to simplify the notations. For sufficiently small~$\mathbf{g}_\chi$, if  $(u,\vec{A})$ and $(u,\vec{A}')$ are minimizers, \eqref{eq:PF-Lagrange-A} yields
\begin{align*}
\vec{A}-\vec{A}' = -32\pi^3(-\Delta)^{-1}g{\hat{\chi}}*[(g\hat{\chi}*(\vec{A}-\vec{A}'))\abs{u}^2_{\mathbb{C}^2}] \,.
\end{align*}
It follows from Lemmata~\ref{lem:estimateChikFu1u2} and~\ref{lem:estimateChiAU} that
\begin{align*}
\big\| \vec{A} -\vec{A}' \big\|_{\dot{H}^{1}}
&= 32\pi^3 \big\| g\hat{\chi}*[(g\hat{\chi}*(\vec{A} -\vec{A}' )) |u|^2_{\mathbb{C}^2}] \big\|_{\dot{H}^{-1}}\notag\\
&\le C  \mathbf{g}_\chi \big\|(g\hat{\chi}*(\vec{A} -\vec{A}' ))u\|_{L^2} \|u\|_{H^1}\notag\\
&\le C^2 \mathbf{g}_\chi^2 \big\|\vec{A} -\vec{A}'\big\|_{\dot{H}^1} \|u\|_{H^1}^2\\
&\le C_V^2 \mathbf{g}_\chi^2 \big\|\vec{A} -\vec{A}'\big\|_{\dot{H}^1} \,,
\end{align*}
which implies that, for sufficiently small~$\mathbf{g}_\chi$, $\vec{A}=\vec{A}'$.
\end{proof}

\subsection{Expansion of the minimum at small coupling}\label{sec:PF-expansion}
In this section, we prove Proposition \ref{prop:Asymtotic-Expansion-Ground-State-Energy}, by establishing the asymptotic expansion \eqref{expansion-pauli-fierz}.

\begin{proof}[Proof of Proposition \ref{prop:Asymtotic-Expansion-Ground-State-Energy}]
Recall that
\begin{multline*} 
E_V=\E_V(u_{\mathrm{gs}},\vec{A}_{\mathrm{gs}}) =\overset{(i)}{\overbrace{ \ps{u_{\mathrm{gs}}}{H_Vu_{\mathrm{gs}}} }} -2\overset{(ii)}{\overbrace{\Real\big\langle-i\vec{\nabla} u_{\mathrm{gs}},(g\hat{\chi}*\vec{A_{\mathrm{gs}}})u_{\mathrm{gs}}\big\rangle }} \\
\quad-  \underset{(iii)}{\underbrace{\big\langle u_{\mathrm{gs}},\vec{\sigma}\cdot (g\hat\chi * \vec{\nabla} \wedge \vec{A}_{\mathrm{gs}}) u_{\mathrm{gs}} \big\rangle}}
+\underset{(iv)}{\underbrace{\big\langle u_{\mathrm{gs}},(g\hat{\chi}*\vec{A_{\mathrm{gs}}})^2u_{\mathrm{gs}}\big\rangle}}
+\frac{1}{32\pi^3}\underset{(v)}{\underbrace{\big\|\vec{A}_{\mathrm{gs}}\big\|^2_{\dot{H}^1}}}\,. 
\end{multline*}

For~$(i)$, using that $H_V \Pi_V u_{\mathrm{gs}}= \mu_V \Pi_V u_{\mathrm{gs}}$, $\Pi_V u_{\mathrm{gs}} \perp \varphi_{\mathrm{gs}}$ and  \eqref{eqn:estimate_phi_gs}, we obtain
\begin{align*}
|(i) 
 - \mu_V|\leq |\ps{\varphi_{\mathrm{gs}}}{H_V\varphi_{\mathrm{gs}}}| \leq C_V \mathbf{g}_\chi^4. 
\end{align*}

Next we decompose $(ii)$ into three terms:
\begin{multline*}
(ii)
= g\Real\big\langle-i\vec{\nabla} \Pi_V u_{\mathrm{gs}},(\hat{\chi}*\vec{A}_{\mathrm{gs}}) \Pi_V u_{\mathrm{gs}}\big\rangle \\
+2g\Real\big\langle-i\vec{\nabla} \Pi_V u_{\mathrm{gs}},(\hat{\chi}*\vec{A}_{\mathrm{gs}})\varphi_{\mathrm{gs}}\big\rangle
+g\Real\big\langle-i\vec{\nabla} \varphi_{\mathrm{gs}},(\hat{\chi}*\vec{A}_{\mathrm{gs}}) \varphi_{\mathrm{gs}}\big\rangle.
\end{multline*}
The first term vanishes because, with $\Pi_V u_{\mathrm{gs}} = \big(\begin{smallmatrix}a\\
b
\end{smallmatrix}\big) u_V$ for some coefficients~$a$ and~$b$ in~$\mathbb{C}$,
\begin{equation*}
g\Real\big\langle-i\vec{\nabla} \Pi_V u_{\mathrm{gs}},(\hat{\chi}*\vec{A}_{\mathrm{gs}}) \Pi_V u_{\mathrm{gs}}\big\rangle 
= g(|a|^2+|b|^2)\Real\big\langle-i\vec{\nabla} u_V,(\hat{\chi}*\vec{A}_{\mathrm{gs}}) u_V \big\rangle =0\,,
\end{equation*}
since $u_V$, $\hat\chi$ and $\vec{A}_{\mathrm{gs}}$ are real-valued.
The next term in $(ii)$ is controlled using Cauchy-Schwarz's inequality followed by Lemmata~\ref{lem:estimateChiAU} and~\ref{lm:lagrange-PF},
\begin{equation*}
|\big\langle-i\vec{\nabla} \Pi_V u_{\mathrm{gs}},(g\hat{\chi}*\vec{A}_{\mathrm{gs}})\varphi_{\mathrm{gs}}\big\rangle| \\
\leq C_V \mathbf{g}_\chi \|\vec{\nabla} \Pi_V u_{\mathrm{gs}}\|_{L^2} \| \vec{A}_{\mathrm{gs}} \|_{\dot{H}^1} \| \varphi_{\mathrm{gs}}\|_{H^1}
\leq C_V \mathbf{g}_\chi^4 \,.
\end{equation*}
The last term in $(ii)$ is bounded by~$C_V \mathbf{g}_\chi^6$ using similar arguments. 

Similarly, $(iv)$ is bounded by~$C_V \mathbf{g}_\chi^4$.

As for $(iii)$, using the formula $\int \vec{w}_1\cdot \vec{\nabla}\wedge \vec{w}_2 = \int \vec{w}_2\cdot \vec{\nabla}\wedge \vec{w}_1 $\,, we can rewrite
$$
(iii)
=\int (g\hat\chi *  \vec{A}_{\mathrm{gs}}) \cdot \vec{\nabla} \wedge \big\langle u_{\mathrm{gs}},\vec{\sigma} u_{\mathrm{gs}} \big\rangle_{\mathbb{C}^2} \,.
$$
In this form, it can be shown using the same arguments as before that 
\begin{align*}
\big|
(iii)
- \int (g\hat\chi *  \vec{A}^{[1]}_{\mathrm{gs}}) \cdot \vec{\nabla} \wedge u_V^2\vec{\omega}_{\mathrm{gs}} \big| 
\leq C_V \mathbf{g}_\chi^4,
\end{align*}
where we recall that $\omega_{\mathrm{gs}}$ has been defined in \eqref{eq:defomega}. Now, thanks to \eqref{eqn:A[1]} and the formula $\vec{\nabla} \wedge \vec{\nabla} \wedge u_V^2\vec{\omega}_{\mathrm{gs}}= -\Delta u_V^2\vec{\omega}_{\mathrm{gs}} - \vec{\nabla} (\, \vec{\nabla}\cdot u_V^2\vec{\omega}_{\mathrm{gs}})$, we have
\begin{multline}
\int (g\hat\chi *  \vec{A}^{[1]}_{\mathrm{gs}}) \cdot \vec{\nabla} \wedge u_V^2\vec{\omega}_{\mathrm{gs}}
=  16\pi^3  \int  (g\hat{\chi} * \vec{\nabla}  \wedge u_V^2\vec{\omega}_{\mathrm{gs}}) \cdot ((-\Delta)^{-1} g\hat{\chi} * \vec{\nabla}  \wedge u_V^2\vec{\omega}_{\mathrm{gs}}) \\
 = 16\pi^3 \int (g\hat\chi * u_V^2)^2 |\vec{\omega}_{\mathrm{gs}}|^2 + 16\pi^3 \int ((-\Delta)^{-1}g\hat\chi * \vec{\omega}_{\mathrm{gs}}\cdot \vec{\nabla} u_V^2)(g\hat\chi * \vec{\omega}_{\mathrm{gs}}\cdot \vec{\nabla} u_V^2) . \label{eq:351}
\end{multline}

To estimate $(v)$, we use Lemma~\ref{lm:lagrange-PF}, which shows that
\begin{equation*}
\big|(v)
-\|\vec{A}^{[1]}_{\mathrm{gs}}\|^2_{\dot{H}^1}\big| \leq C_V \mathbf{g}_\chi^4 \,. 
\end{equation*}
A direct computation then gives
\begin{align*}
\|\vec{A}^{[1]}_{\mathrm{gs}}\|^2_{\dot{H}^1} = (16\pi^3)^2  \int  (g\hat{\chi} * \vec{\nabla}  \wedge u_V^2\vec{\omega}_{\mathrm{gs}}) \cdot ((-\Delta)^{-1} g\hat{\chi} * \vec{\nabla}  \wedge u_V^2\vec{\omega}_{\mathrm{gs}}),
\end{align*}
namely we obtain the same term as in \eqref{eq:351}, with a different pre-factor.

Putting all together, we have shown that
\begin{equation*}
\Big | E_V -  \mu_V + 8\pi^3 g^2 \Big ( \int (\hat\chi * u_V^2)^2 |\vec{\omega}_{\mathrm{gs}}|^2 +  \int ((-\Delta)^{-1}\hat\chi * \vec{\omega}_{\mathrm{gs}}\cdot \vec{\nabla} u_V^2)(\hat\chi * \vec{\omega}_{\mathrm{gs}}\cdot \vec{\nabla} u_V^2) \Big ) \Big | \le C_V \mathbf{g}_\chi^4.
\end{equation*}
We have $|1-|\vec{\omega}_{\mathrm{gs}}|^2| \leq C_V \mathbf{g}_\chi^4$ (see Remark \ref{rk:omega}). Moreover, in the case of a radial potential~$V$, the ground state $u_V$ of $H_V$ is radial. If in addition $\chi$ is radial, then the second term in the right-hand side of the previous equation can be expressed independently of~$\vec{\omega}_{\mathrm{gs}}$. This directly leads to \eqref{expansion-pauli-fierz}.
\end{proof}

\subsection{Ultraviolet limit of the ground state energies}\label{sec:UV-PF}

We suppose in this section that the interaction between the non-relativistic particle and the field is cut-off in the ultraviolet, i.e.~that the Maxwell--Schr\"odinger energy functional is given by \eqref{eq:comput_E(u,A)Lambda_intro}
with $\chi_\Lambda=\chi\mathds{1}_{|k|\le\Lambda}$, for some ultraviolet parameter $\Lambda>0$. We then study the limit $\Lambda\to\infty$.
In this section we drop the index~$V$ for~$\mathcal{E}_V$ as the potential remains fixed throughout the section.

As in the previous sections, $\chi$ will be fixed such that $\chi/|k|$ lies in~$L^{2}+L^{3,\infty}$. In particular, we have $\chi_\Lambda/|k|$ in~$L^2$ and $\chi_\Lambda/\sqrt{|k|}$ in~$L^2$, which in turn implies that the ultraviolet cut-off Pauli-Fierz Hamiltonian 
\[
\mathbb{H}_\Lambda:= \big( \vec{\sigma}\cdot \big(-i\vec{\nabla}_{x}\otimes\mathbf{I}_{\text{f}}-\vec{\mathbb{A}}(\vec{m}_{\chi_\Lambda,x})\big)\big)^{2}+V\otimes\mathbf{I}_{\text{f}}+\mathbf{I}_{\text{el}}\otimes \mathbb{H}_{\text{f}},
\]
identifies to a self-adjoint operator (see Appendix \ref{app:Fock}).

We show that the ground state energies $E_{V,\Lambda}$ defined in \eqref{eq:GSenergy_Lambda} converge to $E_V$ in the ultraviolet limit $\Lambda\to\infty$. It should be noted that, in general, $|k|^{-1}\chi_\Lambda$ does \emph{not} converge to $|k|^{-1}\chi$ in $L^2+L^{3,\infty}$. To circumvent this difficulty, we will use the following relation
\begin{equation}\label{eq:rel_Lambda}
\hat{\chi}_\Lambda*\vec{A}=(2\pi)^{-3}\mathcal{F}(\chi_\Lambda\bar{\FF}(\vec{A}))=(2\pi)^{-3}\mathcal{F}(\chi\bar{\FF}(\mathds{1}_{|-i\vec{\nabla}|\le\Lambda}\vec{A}))=\hat\chi*\vec{A}_{\le\Lambda},
\end{equation}
where we have set
\begin{equation*}
\vec{A}_{\le\Lambda}:=\mathds{1}_{|-i\vec{\nabla}|\le\Lambda}(\vec{A}).
\end{equation*}
Recalling from \eqref{eq:def_space_A} that $\mathcal{A} = \{ \vec A\in \dot{H}^1(\mathbb{R}^3;\mathbb{R}^3) \mid  \vec \nabla \cdot \vec A = 0\}$, we introduce the subspace
 \begin{align}
\mathcal{A}_{\le\Lambda} := \{ \vec A\in\mathcal{A} \mid \vec{A}=\mathds{1}_{|-i\vec{\nabla}|\le\Lambda}(\vec{A}) \} \,. \label{eq:def_space_A_Lambda}
\end{align}
We then have the following identity.
\begin{lemma}\label{lm:Leb2}
Suppose that $V$ satisfies Hypothesis \ref{condVPauliFierz} and that $\chi$ satisfies Hypothesis \ref{condChi}. Then, for all $\Lambda>0$,
\begin{equation*}
E_{V,\Lambda}=\inf_{(u,\vec{A})\in\mathcal{U}\times\mathcal{A}_{\le\Lambda}}\mathcal{E}(u,\vec{A}).
\end{equation*}
\end{lemma}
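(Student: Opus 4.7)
The approach is to combine the identity \eqref{eq:rel_Lambda} with a Plancherel-based orthogonal decomposition of the $\dot H^1$-norm, so as to reduce the minimization over all of $\mathcal{A}$ to the subspace $\mathcal{A}_{\le\Lambda}$. Writing $\vec A_{>\Lambda}:=\mathds{1}_{|-i\vec\nabla|>\Lambda}(\vec A)$, so that $\vec A = \vec A_{\le\Lambda} + \vec A_{>\Lambda}$, the key point is that the two Fourier multipliers $\mathds{1}_{|-i\vec\nabla|\le\Lambda}$ and $\mathds{1}_{|-i\vec\nabla|>\Lambda}$ are orthogonal and both commute with $|k|$, so Plancherel yields
\[
\|\vec A\|^2_{\dot H^1} = \|\vec A_{\le\Lambda}\|^2_{\dot H^1} + \|\vec A_{>\Lambda}\|^2_{\dot H^1}.
\]
At the same time, \eqref{eq:rel_Lambda} reads $\hat\chi_\Lambda*\vec A = \hat\chi*\vec A_{\le\Lambda}$, so the interaction-dependent contributions to $\mathcal{E}_{V,\Lambda}(u,\vec A)$ involve $\vec A$ only through $\vec A_{\le\Lambda}$. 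Combining the two observations gives the central identity
\[
\mathcal{E}_{V,\Lambda}(u,\vec A) = \mathcal{E}(u,\vec A_{\le\Lambda}) + \frac{1}{32\pi^3}\|\vec A_{>\Lambda}\|^2_{\dot H^1}.
\]

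Both inequalities then follow immediately. For the lower bound, I would first check that the Fourier projection $\mathds{1}_{|-i\vec\nabla|\le\Lambda}$ preserves $\dot H^1$ and commutes with $\vec\nabla\cdot$ (which is itself the Fourier multiplier by $ik$), so that $\vec A\in \mathcal{A}$ implies $\vec A_{\le\Lambda}\in\mathcal{A}_{\le\Lambda}$; dropping the non-negative term then gives $\mathcal{E}_{V,\Lambda}(u,\vec A)\ge \mathcal{E}(u,\vec A_{\le\Lambda})\ge \inf_{\mathcal{U}\times\mathcal{A}_{\le\Lambda}}\mathcal{E}$, and taking the infimum over $\mathcal{U}\times\mathcal{A}$ yields $E_{V,\Lambda}\ge\inf_{\mathcal{U}\times\mathcal{A}_{\le\Lambda}}\mathcal{E}$. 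For the reverse inequality, since $\mathcal{A}_{\le\Lambda}\subset\mathcal{A}$, the identity above specializes on $\vec A\in\mathcal{A}_{\le\Lambda}$ to $\mathcal{E}_{V,\Lambda}(u,\vec A)=\mathcal{E}(u,\vec A)$, which gives $E_{V,\Lambda}\le \inf_{\mathcal{U}\times\mathcal{A}_{\le\Lambda}}\mathcal{E}$.

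No substantive obstacle is expected here: once the identity for $\mathcal{E}_{V,\Lambda}(u,\vec A)$ is in hand, the remaining argument is just orthogonality plus non-negativity of the high-frequency $\dot H^1$-energy, encoding the intuitive fact that at fixed low-frequency part $\vec A_{\le\Lambda}$ it is energetically optimal to take $\vec A_{>\Lambda}=0$. The only small verification — that the low-pass Fourier projection preserves the divergence-free condition — is routine from the Fourier characterisation of $\mathcal{A}$.
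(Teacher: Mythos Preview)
Your proposal is correct and follows essentially the same approach as the paper: both proofs derive the identity $\mathcal{E}_{V,\Lambda}(u,\vec A)=\mathcal{E}(u,\vec A_{\le\Lambda})+\tfrac{1}{32\pi^3}\|\mathds{1}_{|-i\vec\nabla|\ge\Lambda}\vec A\|^2_{\dot H^1}$ from \eqref{eq:rel_Lambda} together with the orthogonal splitting of the $\dot H^1$-norm, and then observe that the lemma follows immediately. Your write-up is simply a bit more explicit about the two inequalities and the fact that the low-pass projection preserves $\mathcal{A}$.
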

\begin{proof}
It suffices to observe that, by \eqref{eq:rel_Lambda}, the following equality holds for all $(u,\vec{A})$ in~$\mathcal{U}\times\mathcal{A}$:
\begin{align}\label{eq:rel_Lambda2}
\mathcal{E}_\Lambda(u,\vec{A})=\mathcal{E}(u,\vec{A}_{\le\Lambda})+\frac{1}{32\pi^3}\big\| \mathds{1}_{|-i\vec{\nabla}|\ge\Lambda}(\vec A) \big\|^2_{\dot{H}^1}.
\end{align}
The statement of the lemma directly follows.
\end{proof}
If $\vec{A}$ belongs to~$\dot{H}^1$, we have that $\|\vec{A}_{\le\Lambda}-\vec{A}\|_{\dot{H}^1}\to 0$ as $\Lambda\to\infty$.
Now we can prove the convergence of the ground state energies in the ultraviolet limit. 
\begin{proof}[Proof of Proposition \ref{prop:conv-GS-en-PF_intro}]
For $0<\Lambda\le\Lambda'$, we have $\mathcal{A}_{\le\Lambda}\subseteq\mathcal{A}_{\le\Lambda'}\subset\mathcal{A}$ and hence, by Lemma~\ref{lm:Leb2},
\begin{align*}
E_{V}\le E_{V,\Lambda'}\le E_{V,\Lambda}.
\end{align*}
Therefore $\Lambda\mapsto E_{V,\Lambda}$ is non-increasing on $(0,\infty)$, bounded below by $E_{V}$, so that we can define
\begin{equation*}
E_{V,\infty}:=\lim_{\Lambda\to\infty}E_{V,\Lambda} \ge E_{V}.
\end{equation*}

To show that $E_{V,\infty}\le E_{V}$, let $\varepsilon>0$ and let $(u_\varepsilon,\vec{A}_\varepsilon)$ in~$\mathcal{U}\times\mathcal{A}$ be such that~$\mathcal{E}(u_\varepsilon,\vec{A}_\varepsilon)\le E_{V}+\varepsilon$. Using \eqref{eq:rel_Lambda2}, we have
\begin{align*}
E_{V,\Lambda}&\le \mathcal{E}_\Lambda(u_\varepsilon,\vec{A}_\varepsilon) = \mathcal{E}(u_\varepsilon,\vec{A}_{\varepsilon,\le\Lambda}) +\frac{1}{32\pi^3}\big\| \mathds{1}_{|-i\vec{\nabla}|\ge\Lambda}\vec{A}_{\varepsilon} \big\|^2_{\dot{H}^1} \\
&\le E_V+\varepsilon + \mathcal{E}(u_\varepsilon,\vec{A}_{\varepsilon,\le\Lambda}) - \mathcal{E}(u_\varepsilon,\vec{A}_{\varepsilon}) +\frac{1}{32\pi^3}\big\| \mathds{1}_{|-i\vec{\nabla}|\ge\Lambda}\vec{A}_{\varepsilon} \big\|^2_{\dot{H}^1}.
\end{align*}
A direct computation shows that
\begin{multline}
\Big|\mathcal{E}(u_\varepsilon,\vec{A}_{\varepsilon,\le\Lambda}) - \mathcal{E}(u_\varepsilon,\vec{A}_{\varepsilon}) +\frac{1}{32\pi^3}\big\| \mathds{1}_{|-i\vec{\nabla}|\ge\Lambda}\vec{A}_{\varepsilon} \big\|^2_{\dot{H}^1} \Big| \\
\le2|g| \, \big| \big\langle \vec{\sigma}\cdot(-i\vec{\nabla}u_\varepsilon),\vec{\sigma}\cdot\hat{\chi}*(\vec{A}_{\varepsilon,\le\Lambda}-\vec{A}_\varepsilon)u_\varepsilon\big\rangle_{L^2}\big| \\
\quad+g^2\Big|\big\|(\hat{\chi}*\vec{A}_{\varepsilon,\le\Lambda})u_\varepsilon\big\|_{L^2}^2-\big\|(\hat{\chi}*\vec{A}_{\varepsilon})u_\varepsilon\big\|_{L^2}^2\Big|. \label{eq:UV1a}
\end{multline}
Applying Lemma \ref{lem:estimateChiAU} gives
\begin{equation*}
\big| \big\langle \vec{\sigma}\cdot(-i\vec{\nabla}u_\varepsilon),\vec{\sigma}\cdot\hat{\chi}*(\vec{A}_{\varepsilon,\le\Lambda}-\vec{A}_\varepsilon)u_\varepsilon\big\rangle_{L^2}\big|\le C_{\chi,u_\varepsilon} \big\|\vec{A}_{\varepsilon,\le\Lambda}-\vec{A}_\varepsilon\big\|_{\dot{H}^1},
\end{equation*}
for some constant $C_{\chi,u_\varepsilon}$ depending on $\chi$ and $u_\varepsilon$. Likewise,
\begin{multline*}
\Big|\big\|(\hat{\chi}*\vec{A}_{\varepsilon,\le\Lambda})u_\varepsilon\big\|^2-\big\|(\hat{\chi}*\vec{A}_{\varepsilon})u_\varepsilon\big\|_{L^2}^2\Big|\\
\le\big\|\big(\hat{\chi}*(\vec{A}_{\varepsilon,\le\Lambda}-\vec{A}_\varepsilon)\big)u_\varepsilon\big\|_{L^2} \big(\big\|(\hat{\chi}*\vec{A}_{\varepsilon,\le\Lambda})u_\varepsilon\big\|_{L^2}+\big\|(\hat{\chi}*\vec{A}_{\varepsilon})u_\varepsilon\big\|_{L^2}\big)\\
\le C_{\chi,u_\varepsilon,\vec{A}_\varepsilon}\big\|\vec{A}_{\varepsilon,\le\Lambda}-\vec{A}_\varepsilon\big\|_{\dot{H}^1}.
\end{multline*}
Since $\|\vec{A}_{\varepsilon,\le\Lambda}-\vec{A}_\varepsilon\|_{\dot{H}^1}\to 0$ as $\Lambda\to\infty$, inserting the previous estimates into \eqref{eq:UV1a} and letting~$\Lambda\to\infty$, we obtain
\begin{align*}
E_{V,\infty}\le E_V+\varepsilon.
\end{align*}
Since $\varepsilon>0$ is arbitrary, this concludes the proof of the proposition.
\end{proof}

\appendix

\section{Operators in Fock space, self-adjointness}\label{app:Fock}
In this appendix we set up some notations and give the proof of the self-adjointness of the Pauli-Fierz operator $\mathbb{H}$. We recall the definitions of standard operators in Fock space. We only give here formal definitions, referring the reader to e.g. \cite{DerezinskiGerard99,ReedSimonII} for more details.

Let us consider a Hilbert space $\mathfrak{h}$ and its associated symmetric Fock space $\F(\mathfrak{h}):=\bigoplus_{n=0}^\infty\bigvee^n\mathfrak{h}$, with $\bigvee^0\mathfrak{h}:=\mathbb{C}$.  Let~$h\in\mathfrak{h}$. For $n\in\N,$ the creation and annihilation operators are respectively defined as 
\begin{align*}
a^*(h)_{|\bigvee^n\mathfrak{h}}&=\sqrt{(n+1)}\;|h\rangle\bigvee\I_{\bigvee^n\mathfrak{h}},\quad
a(h)_{|\bigvee^n\mathfrak{h}}=\sqrt{n}\;\langle h|\otimes\I_{\bigvee^{n-1}\mathfrak{h}}, \quad a(h)_{|\mathbb{C}}=0 \, .
\end{align*}
The field operator $\Phi(h)$ is then defined as
\begin{equation}\label{eqn:def-field-operator}
\Phi(h)=(a(h)+a^*(h))/\sqrt{2}.
\end{equation}

Let $\omega$ be a self-adjoint operator on $\mathfrak{h}.$ The second quantization of $\omega$ is the operator on Fock space defined by
\begin{equation*}
\mathrm{d}\Gamma(\omega)_{|\bigvee^n\mathfrak{h}}=\sum_{k=1}^n\I_{\bigvee^{k-1}\mathfrak{h}}\otimes \omega\otimes \I_{\bigvee^{n-k}\mathfrak{h}},\quad \mathrm{d}\Gamma(\omega)_{|\mathbb{C}}=0.
\end{equation*} 

The coherent state of parameter $f\in\mathfrak{h}$ is defined as
\begin{equation*}
\Psi_f:=e^{i\Phi\left(\frac{\sqrt{2}}{i}f\right)}\Omega=e^{-\frac{\norm{f}^2_{\mathfrak{h}}}{2}}\sum_{n=0}^\infty\frac{f^{\otimes n}}{\sqrt{n!}},
\end{equation*}
where $\Omega$ stands for the Fock vacuum. Coherent states are eigenvectors of the annihilation operators in the sense that for all $f,h\in\mathfrak{h},$  
\begin{equation*}
a(h)\Psi_f=\ps{h}{f}_{\mathfrak{h}}\Psi_f\,.
\end{equation*}
This in turn leads to the following equalities:
\begin{align}\label{eq:coher-app}
\ps{\Psi_f}{\Phi(h)\Psi_f}_{\F(\mathfrak{h})}&=2\Real\ps{h}{f}_\mathfrak{h},\quad \ps{\Psi_f}{\mathrm{d}\Gamma(\omega)\Psi_f}_{\F(\mathfrak{h})}=\ps{f}{\omega\, f}_\mathfrak{h}\,.
\end{align}

We recall the following estimates, which holds for any non-negative operator $\omega$ on $\mathfrak{h}$, $h$ in the domain of $\omega^{-1/2}$ and $\Psi$ in the domain of $\mathrm{d}\Gamma(\omega)^{1/2}$:
\begin{align}
&\| a(h)\Psi\|\le\|\omega^{-\frac12}h\|^2_{\mathfrak{h}}\|\mathrm{d}\Gamma(\omega)^{\frac12}\Psi\|^2_{\F(\mathfrak{h})},\label{eq:estim1}\\
& \| a^*(h)\Psi\|\le\|\omega^{-\frac12}h\|^2_{\mathfrak{h}}\|\mathrm{d}\Gamma(\omega)^{\frac12}\Psi\|^2_{\F(\mathfrak{h})}+\|f\|^2_{\mathfrak{h}}\|\Psi\|^2_{\F(\mathfrak{h})}.\label{eq:estim2}
\end{align}

The next proposition establishes the self-adjointness of the Pauli-Fierz Hamiltonian $\mathbb{H}$ of the standard model of non-relativistic QED (see \eqref{eq:defH}) under our assumptions. We recall a proof for the convenience of the reader.

\begin{proposition}[Self-adjointness of $\mathbb{H}$]\label{prop:self-adj}
Suppose that $V=V_+-V_-$ belongs to $L_{\mathrm{loc}}^{1}(\mathbb{R}^{3};\mathbb{R}^{+})$ and that $V_{-}$ is infinitesimally small with respect to $-\Delta$ in the sense of quadratic forms on $H^{1}(\mathbb{R}^3)$. Suppose in addition that~$\chi:\mathbb{R}^3\to\mathbb{R}$ is such that both $|k|^{-1/2}\chi$ and~$|k|^{-1}\chi$ are in~$L^2(\mathbb{R}^3).$ Then the Pauli-Fierz hamiltonian~$\mathbb{H}$ is a self-adjoint operator with form domain 
\begin{equation*}
\mathcal{Q}(\mathbb{H})=\mathcal{Q}\left(H_{V_+}\otimes\I_{\C^2}\otimes\I_\mathrm{f}+\I_\mathrm{el}\otimes\I_{\C^2}\otimes\mathrm{d}\Gamma(\abs{k})\right)\,.
\end{equation*}
\end{proposition}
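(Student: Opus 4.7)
The plan is to construct $\mathbb{H}$ as the self-adjoint operator associated, via Kato's representation theorem, to the quadratic form
\[
q_{\mathbb{H}}(\Psi) := \|D_A\Psi\|^2 + \langle\Psi, V\Psi\rangle + \|\mathbb{H}_{\mathrm{f}}^{1/2}\Psi\|^2, \qquad D_A := \vec{\sigma}\cdot\bigl(-i\vec{\nabla}_x\otimes\mathbf{I}_{\mathrm{f}} - \vec{\mathbb{A}}(\vec{m}_x)\bigr),
\]
defined on $\mathcal{Q}(\mathbb{H}_{\mathrm{free}})$ (I omit the identities on $\mathbb{C}^{2}$ for legibility). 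A crucial tactical choice is to \emph{keep the squared Pauli operator as a single nonnegative object}, rather than expanding it into its magnetic-Laplacian and Zeeman parts as in~\eqref{eq:defH}: a direct estimate of the Zeeman term $\vec{\sigma}\cdot\Phi(\vec{\nabla}_x\wedge\vec{m}_x)$ through \eqref{eq:estim1}--\eqref{eq:estim2} would require $|k|^{1/2}\chi\in L^2$, a hypothesis not made.

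The well-definedness of $q_{\mathbb{H}}$ on $\mathcal{Q}(\mathbb{H}_{\mathrm{free}})$ follows from the elementary inequality $\|D_A\Psi\|^2\leq 3\|(-i\vec{\nabla}_x - \vec{\mathbb{A}}(\vec{m}_x))\Psi\|^2\leq 6\|\vec{\nabla}\Psi\|^2 + 6\|\vec{\mathbb{A}}(\vec{m}_x)\Psi\|^2$, combined with the uniform-in-$x$ bound
\[
\|\vec{\mathbb{A}}(\vec{m}_{x,j})\Psi\|\leq C\bigl(\||k|^{-1}\chi\|_{L^2}\|\mathbb{H}_{\mathrm{f}}^{1/2}\Psi\| + \||k|^{-1/2}\chi\|_{L^2}\|\Psi\|\bigr),
\]
itself a direct consequence of \eqref{eq:estim1}--\eqref{eq:estim2} with $\omega=|k|$. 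After integration in $x$ this gives $\|\vec{\mathbb{A}}(\vec{m}_x)\Psi\|^2\leq C(\|\mathbb{H}_{\mathrm{f}}^{1/2}\Psi\|^2 + \|\Psi\|^2)$ on the whole space. The $V$-term is finite since $V_+\in L^1_{\mathrm{loc}}$ and $V_-$ is infinitesimally form-bounded by $-\Delta$.

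For the lower bound and closedness I would prove the reverse inequality $\|\vec{\nabla}\Psi\|^2\leq 2\|D_A\Psi\|^2 + 2\|\vec{\mathbb{A}}(\vec{m}_x)\Psi\|^2$, which relies on the Pauli identity $(\vec{\sigma}\cdot\vec{w})^2 = \vec{w}\cdot\vec{w}$ for a formally self-adjoint $\vec{w}$ with commuting components, applied both to $-i\vec{\nabla}$ and to $\vec{\mathbb{A}}(\vec{m}_x)$; commutativity of the latter is a short computation using that $\langle\vec{m}_{x,j},\vec{m}_{x,k}\rangle$ is real and symmetric in $(j,k)$, as $\chi$ is real and $\sum_\tau\vec{\varepsilon}_{\tau,j}(k)\vec{\varepsilon}_{\tau,k}(k)$ is symmetric in $(j,k)$. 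Combined with the infinitesimal bound $\langle\Psi,V_-\Psi\rangle\leq\delta\|\vec{\nabla}\Psi\|^2 + C_\delta\|\Psi\|^2$, this yields, for $\delta$ small,
\[
q_{\mathbb{H}}(\Psi)\geq (1-2\delta)\|D_A\Psi\|^2 + \langle\Psi,V_+\Psi\rangle + (1-2\delta C)\|\mathbb{H}_{\mathrm{f}}^{1/2}\Psi\|^2 - b_\delta\|\Psi\|^2,
\]
so $q_{\mathbb{H}}$ is bounded below, and the same chain of inequalities shows that the $q_{\mathbb{H}}$-form norm is equivalent to the $\mathbb{H}_{\mathrm{free}}$-form norm, so $q_{\mathbb{H}}$ is closed on $\mathcal{Q}(\mathbb{H}_{\mathrm{free}})$. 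Kato's representation theorem then delivers the desired self-adjoint operator with form domain $\mathcal{Q}(\mathbb{H}_{\mathrm{free}})$. The only real obstacle in this scheme is precisely the absence of any integrability hypothesis on $\chi$ itself without a $|k|^{-s}$-weight with $s\geq 1/2$; this is what forces the Pauli square to be treated as an indivisible nonnegative form, all magnetic-field estimates being extracted through the single field norm $\|\vec{\mathbb{A}}(\vec{m}_x)\Psi\|$.
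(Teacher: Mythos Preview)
Your proof is correct and follows essentially the same strategy as the paper's: keep the squared Pauli operator $\|D_A\Psi\|^2$ as a single nonnegative term, use \eqref{eq:estim1}--\eqref{eq:estim2} to control $\|\vec{\mathbb{A}}(\vec m_x)\Psi\|$ in terms of $\|\mathbb{H}_{\mathrm f}^{1/2}\Psi\|$, and establish equivalence of the interacting and free form norms to conclude via Kato's representation theorem. The only organizational difference is that the paper first builds the self-adjoint operator associated to $q_+$ (with $V_+$ in place of $V$) and then appeals to KLMN for the infinitesimal perturbation $-V_-$, whereas you absorb $V_-$ directly into $q_{\mathbb{H}}$ from the start; both routes rest on the same inequalities and your explicit justification of the Pauli identity $\|\vec\sigma\cdot\vec w\,\Psi\|=\|\vec w\,\Psi\|$ for the commuting components of $\vec{\mathbb{A}}(\vec m_x)$ makes transparent a step the paper leaves implicit.
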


\begin{proof}
Let $\mathcal{Q_+}:=\mathcal{Q}\left(H_{V_+}\otimes\I_{\C^2}\otimes\I_\mathrm{f}+\I_\mathrm{el}\otimes\I_{\C^2}\otimes\mathrm{d}\Gamma(\abs{k})\right).$ 
We claim that the following form is closed on $\mathcal{Q_+}$:
\begin{multline*}
q_+(\Psi_1,\Psi_2):=\ps{\vec{\sigma}\cdot(-i\vec{\nabla}\otimes\I_{\C^2}\otimes\I_\mathrm{f}-\vec{\mathbb{A}}(\vec{m}_x))\Psi_1}{\vec{\sigma}\cdot(-i\vec{\nabla}\otimes\I_{\C^2}\otimes\I_\mathrm{f}-\vec{\mathbb{A}}(\vec{m}_x))\Psi_2}_{\HHH}\\
+\ps{(V_+^{1/2}\otimes\I_{\C^2}\otimes\I_\mathrm{f})\Psi_1}{(V_+^{1/2}\otimes\I_{\C^2}\otimes\I_\mathrm{f}) \Psi_2}_{\HHH}\\
+\ps{(\I_\mathrm{el}\otimes\I_{\C^2}\otimes\mathbb{H}_f^{1/2})\Psi_1}{(\I_\mathrm{el}\otimes\I_{\C^2}\otimes\mathbb{H}_f^{1/2})\Psi_2}_{\HHH}\,.
\end{multline*}
Let $(\Psi_n)_{n\in\N}$ in $\mathcal{Q}_+^\N$ be such that $\Psi_n\underset{n\rightarrow\infty}{\overset{\HHH}{\longrightarrow}}\Psi$ and $(q_+(\Psi_n, \Psi_n))_{n\in\N}$ converges. We show that $\Psi$ is in $\mathcal{Q}_+$ and that $q_+(\Psi_n-\Psi,\Psi_n-\Psi)\underset{n\rightarrow\infty}{\longrightarrow}0.$ 
Considering the closed form on $\mathcal{Q}_+$ defined by
\begin{multline*}
q_0(\Psi_1, \Psi_2):=\ps{(-i\vec{\nabla}\otimes\I_{\C^2}\otimes\I_\mathrm{f}) \Psi_1}{(-i\vec{\nabla}\otimes\I_{\C^2}\otimes\I_\mathrm{f}) \Psi_2}_{\HHH}\\+\ps{(V_+^{1/2}\otimes\I_{\C^2}\otimes\I_\mathrm{f}) \Psi_1}{(V_+^{1/2}\otimes\I_{\C^2}\otimes\I_\mathrm{f}) \Psi_2}_{\HHH}\\+\ps{(\I_\mathrm{el}\otimes\I_{\C^2}\otimes\mathbb{H}_f^{1/2})\Psi_1}{(\I_\mathrm{el}\otimes\I_{\C^2}\otimes\mathbb{H}_f^{1/2})\Psi_2}_{\HHH}\,,
\end{multline*}
it is not difficult to verify, using \eqref{eq:estim1}--\eqref{eq:estim2}, that there exists a positive constant $C$ such that, for all $\varepsilon>0$, for all $\Psi$ in $\mathcal{Q}_+$,
\begin{equation}\label{eq:last}
(1-C\varepsilon)q_0(\Psi,\Psi)\leq q_+(\Psi,\Psi)+C\varepsilon\left(\norm{(\I_\mathrm{el}\otimes\I_{\C^2}\otimes\mathbb{H}_f^{1/2})\Psi}_{\HHH}^2+\norm{\Psi}_{\HHH}^2\right)\,.
\end{equation}
This implies that $(q_0(\Psi_n,\Psi_n))_{n\in\N}$ is a Cauchy sequence and hence, since $q_0$ is closed on~$\mathcal{Q}_+$, we deduce that $\Psi\in\mathcal{Q}_+.$ Finally, since, similarly as for \eqref{eq:last}, we have
\begin{equation*}
q_+(\Psi,\Psi) \le (1-C\varepsilon)q_0(\Psi,\Psi),
\end{equation*}
we conclude that
\begin{equation*}
q_+(\Psi_n-\Psi, \Psi_n-\Psi)\underset{n\rightarrow\infty}{\longrightarrow}0.
\end{equation*}

We have shown that the quadratic form~$q_+$ is positive and closed on $\mathcal{Q}_+.$  In particular (see e.g.~\cite[Chap.~6, Thm.~2.1]{Kato80}), this ensures that there exists a positive self-adjoint operator $H_+$ such that, for all $\Psi_1,\Psi_2\in\mathcal{Q}_+,$ $q_+(\Psi_1,\Psi_2)=\langle H_+^{1/2}\Psi_1, H_+^{1/2}\Psi_2\rangle_{\HHH}.$ Since $V_-$ is infinitesimally form bounded with respect to $-\Delta$, one easily deduces that it is also infinitesimally form bounded with respect to $H_+$. The KLMN theorem then allows us to conclude that $\mathbb{H}$ identifies to a self-adjoint operator with form domain
\begin{equation*}
\mathcal{Q}(\mathbb{H})=\mathcal{Q}_+=\mathcal{Q}\left(H_{V_+}\otimes\I_{\C^2}\otimes\I_\mathrm{f}+\I_\mathrm{el}\otimes\I_{\C^2}\otimes\mathrm{d}\Gamma(\abs{k})\right) \,.
\end{equation*}
This concludes the proof.
\end{proof}

\bibliographystyle{plain}
\bibliography{biblio}
\end{document}